\newtheorem{theorem}{Theorem}
\newtheorem{lemma}{Lemma}
\theoremstyle{definition}
\theoremstyle{remark}
\newfont{\valami}{ptmr8r scaled 1200} 
\newfont{\kisvalami}{ptmr8r scaled 1000}
\newcommand{\dof}{\backslash}
\begin{document} 
\author{
Konstantinos Papalamprou \\
London School of Economics
\and
Leonidas Pitsoulis  \\
Aristotle University of Thessaloniki 
}
\title{Binary Matroids with Graphic Cocircuits}
\date{}
\maketitle
 \begin{abstract}
An excluded minor characterization for the class of binary signed-graphic matroids with graphic cocircuits is provided.  
In this report we present the necessary computations for the case analysis in the proof.
\end {abstract}

\section*{An excluded minor characterization}
The complete list of regular excluded minors for signed-graphic matroids is provided in~\cite{SliQinZh:09}, specifically: 
\begin{theorem} 
A regular matroid $M$ is signed-graphic if and only if $M$ has no minor isomorphic to $M^{*}(G_1),\ldots,M^{*}(G_{29}),R_{15} or R_{16}$. 
\end{theorem}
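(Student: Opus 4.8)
The plan is to prove the two implications separately: the ``only if'' direction by certifying that each of the thirty-one matroids in the list is regular but not signed-graphic, and the ``if'' direction, which is the substantial one, by induction on $|E(M)|$ using Seymour's decomposition theorem for regular matroids together with the structure theory of signed-graphic matroids. A standing ingredient is that the class $\mathcal{S}$ of signed-graphic matroids is minor-closed: deletion and contraction of a matroid element correspond, under the appropriate conventions for signed graphs (switching an edge to be positive before contracting it, and the treatment of loops and half-edges), to deletion and contraction of edges, so the set of excluded minors is well defined.

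For the ``only if'' direction the essential input is Slilaty's characterization of cographic signed-graphic matroids: for a $3$-connected graph $G$, the cographic matroid $M^{*}(G)$ is signed-graphic if and only if $G$ embeds in the projective plane. One takes the $G_i$ to be the $3$-connected graphs among the thirty-five minor-minimal non-projective-planar graphs of Glover--Huneke--Wang; the remaining, non-$3$-connected graphs on that list have cographic duals which are $1$- or $2$-sums of cographic matroids of projective-planar graphs and therefore lie in $\mathcal{S}$, so they contribute nothing. This leaves precisely $29$ cographic excluded minors: each is non-signed-graphic by Slilaty's theorem, and each is minor-minimal since every proper minor of $M^{*}(G_i)$ equals $M^{*}(H)$ for a proper --- hence projective-planar --- minor $H$ of $G_i$. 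The two sporadic matroids $R_{15}$ and $R_{16}$ need an additional, essentially computational, check: one verifies that neither admits any signed-graph representation (by eliminating all switching classes of signings of the relevant underlying graphs, or by displaying a forbidden cocircuit/configuration), while every proper minor of each is signed-graphic.

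For the ``if'' direction, let $M$ be regular with no minor in the list and induct on $|E(M)|$. If $M$ is not $3$-connected it is a $1$- or $2$-sum of proper minors of itself; if $M$ is $3$-connected then, by Seymour's theorem, $M$ is graphic, or cographic, or isomorphic to $R_{10}$, or a $3$-sum of proper minors of itself. In each ``sum'' case the summands are regular with no forbidden minor, hence signed-graphic by induction. The building blocks are dispatched directly: every graphic matroid is signed-graphic via the all-positive signature; a cographic $M=M^{*}(G)$ with no $M^{*}(G_i)$-minor has $G$ free of all $G_i$-minors, so $G$ is projective-planar and $M\in\mathcal{S}$; and $R_{10}\in\mathcal{S}$ (exhibit a representation, or note that otherwise, since all its proper minors are minors of $M(K_{3,3})$ or of $M^{*}(K_{3,3})$ and hence in $\mathcal{S}$, it would be a minor-minimal obstruction and so would appear in the list, which it does not). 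It then remains to show that the $1$-, $2$-, and $3$-sums arising above, built from signed-graphic summands, are again signed-graphic: $1$-sums are disjoint unions of signed graphs, while $2$- and $3$-sums must be treated by analysing how the signed-graph representations of the two summands --- of which a $3$-connected signed-graphic matroid has only boundedly many, by the representation theory for such matroids --- can be glued along the shared element, respectively the shared triangle.

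The main obstacle is exactly this gluing analysis for $3$-sums, together with the completeness of the list. One must prove that whenever the representations of $M_1$ and $M_2$ cannot be amalgamated into a signed graph for $M_1\oplus_3 M_2$, the resulting matroid has one of the thirty-one listed matroids as a minor --- and it is precisely here that $R_{15}$ and $R_{16}$, which are neither graphic nor cographic, enter, as the minimal obstructing $3$-sums. Because a $3$-connected signed-graphic matroid has only a short, explicitly listable family of signed-graph representations, the collection of obstructing gluings is finite; verifying that it yields nothing beyond the stated excluded minors, and organizing that finite but intricate case analysis, is where the bulk of the work lies.
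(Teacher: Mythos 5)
There is nothing in the paper to compare your argument against: the statement you were given is not proved in this report at all. It is quoted verbatim from Qin, Slilaty and Zhou~\cite{SliQinZh:09} (``The complete list of regular excluded minors for signed-graphic matroids is provided in \cite{SliQinZh:09}''), and the report's own contributions are Lemma~\ref{lem_1}, Theorem~\ref{th_ll3} and the MACEK computations. So your text can only be judged as a standalone proof sketch of the cited result, not against an in-paper argument.

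Judged that way, it is a plausible research plan rather than a proof, and the gap is exactly where you place ``the bulk of the work.'' The whole content of the theorem is the completeness claim: that whenever a $2$- or $3$-sum of signed-graphic pieces fails to be signed-graphic, one of the thirty-one listed matroids appears as a minor. You assert that this reduces to a finite case analysis over the boundedly many signed-graph representations of $3$-connected summands, but you neither carry out that analysis nor prove the finiteness/uniqueness statement you invoke, and you give no argument that $R_{15}$ and $R_{16}$ (whose non-signed-graphicness you also defer to an unspecified computation) are the only sporadic outcomes; so the ``if'' direction is not established. There are also smaller but genuine inaccuracies: the relevant graphs $G_1,\ldots,G_{29}$ are the \emph{non-separable} minor-minimal non-projective-planar graphs (as the report states), not ``the $3$-connected ones among the thirty-five,'' and Slilaty's cographic criterion carries connectivity hypotheses; moreover, in your cographic base case you conclude projective planarity of $G$ from the absence of the $29$ cographic minors, whereas the obstruction set for the projective plane has $35$ members --- you must still argue that a $3$-connected graph avoiding the $29$ non-separable obstructions also avoids the $6$ separable ones (or handle them directly), since their cographic matroids are direct sums of signed-graphic matroids and are not excluded by your list. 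Note also that the published proof in \cite{SliQinZh:09} is not known (from anything in this report) to proceed by Seymour's decomposition and a $3$-sum gluing analysis, so you should not present that route as a reconstruction of the cited argument without consulting that paper.
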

The matroids $M^{*}(G_1),\ldots,M^{*}(G_{29})$ are the cographic matroids of the 29 non-separable forbidden minors for projective planar graphs, while $R_{15}$ and $R_{16}$ are 
two special matroids whose binary compact representation matrices are given in~\cite{SliQinZh:09} and in the next section of this Technical Report.

Clearly one could easily produce the complete list of binary excluded minors for signed-graphic matroids by adding to the list of the above 31 regular excluded minors
the binary excluded minors for regular matroids (i.e. $F_7$ and $F_7^{*}$), since any binary signed-graphic matroid is also regular.
\begin{theorem} \label{th_rew}
A binary matroid $M$ is signed-graphic if and only if $M$ has no minor isomorphic to $M^{*}(G_1),\ldots,M^{*}(G_{29}), R_{15}$, $R_{16}$, $F_7$ or $F_7^{*}$. 
\end{theorem}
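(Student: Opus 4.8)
The plan is to derive Theorem~\ref{th_rew} directly from Theorem~1 and Tutte's excluded-minor characterization of regular matroids, which states that a binary matroid is regular if and only if it has no minor isomorphic to $F_7$ or $F_7^{*}$. The only additional input is the fact recorded in the paragraph preceding the theorem, that every binary signed-graphic matroid is regular; one way to see this is that a signed-graphic matroid is representable over $GF(3)$, so if it is also binary then it is representable over both $GF(2)$ and $GF(3)$ and hence regular. All matroids occurring in the statement are binary, so the equivalence is not vacuous on either side.

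For the forward implication, suppose $M$ is a binary signed-graphic matroid. Then $M$ is regular, so Tutte's theorem rules out an $F_7$ or $F_7^{*}$ minor; and, being a regular signed-graphic matroid, $M$ has by Theorem~1 no minor isomorphic to $M^{*}(G_1),\ldots,M^{*}(G_{29}),R_{15}$ or $R_{16}$. Hence $M$ has no minor isomorphic to any matroid in the list.

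For the converse, suppose $M$ is binary and has no minor isomorphic to any of $M^{*}(G_1),\ldots,M^{*}(G_{29}),R_{15},R_{16},F_7$ or $F_7^{*}$. In particular $M$ is binary with neither an $F_7$ nor an $F_7^{*}$ minor, so Tutte's theorem gives that $M$ is regular; and then $M$ is a regular matroid with no minor isomorphic to $M^{*}(G_1),\ldots,M^{*}(G_{29}),R_{15}$ or $R_{16}$, so Theorem~1 shows that $M$ is signed-graphic. Combining the two implications proves Theorem~\ref{th_rew}.

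I do not expect any real obstacle in this argument itself, which is a short two-way deduction; the entire weight of the result rests on Theorem~1, and specifically on the claim that $M^{*}(G_1),\ldots,M^{*}(G_{29}),R_{15},R_{16}$ are exactly the regular excluded minors for the signed-graphic class, whose verification is the case analysis that the computations in this report are intended to support. One should keep in mind that Theorem~\ref{th_rew} makes no irredundancy claim about the displayed list: showing that it is an antichain in the minor order would further require checking that no member of $\{F_7,F_7^{*}\}$ contains any of the $M^{*}(G_i),R_{15},R_{16}$ as a minor, and vice versa, but this is unnecessary for the stated equivalence.
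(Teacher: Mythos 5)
Your proposal is correct and follows essentially the same route the paper takes (the paper only sketches it in the sentence preceding the theorem): combine Theorem~1 with Tutte's excluded-minor characterization of regular matroids via $F_7$ and $F_7^{*}$, using that every binary signed-graphic matroid is regular. Nothing further is needed.
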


We define a cocircuit $Y$ of a matroid $M$ to be \emph{graphic} if the matroid $M\dof{Y}$ is graphic. The main result in this report
is the complete list of excluded minors for the class of binary signed-graphic matroids with graphic cocircuits. Of importance for the proof of that result, 
Theorem~\ref{th_ll3} here, is the following Lemma.
\begin{lemma}\label{lem_1}
If $N$ is a minor of the matroid $M$ then for any cocircuit $C_{N}\in\mathcal{C}(N^{*})$ there
exists a cocircuit $C_{M}\in\mathcal{C}(M^{*})$ such that $N\dof C_{N}$ is a minor of 
$M\dof C_{M}$. 
\end{lemma}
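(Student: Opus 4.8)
The plan is to reduce the statement to the standard description of the cocircuits of a minor, together with the fact that deletions and contractions of disjoint sets commute. Since $N$ is a minor of $M$, I would begin by fixing a representation $N = M / C \backslash D$ with $C, D \subseteq E(M)$ disjoint, so that $E(N) = E(M) \setminus (C \cup D)$. Dualizing gives $N^{*} = M^{*} \backslash C / D$. Combining the rule that $\mathcal{C}(M^{*}\backslash C)$ consists of the circuits of $M^{*}$ disjoint from $C$ with the rule that the circuits of $M^{*}/D$ are the minimal nonempty sets of the form $Z \setminus D$ with $Z \in \mathcal{C}(M^{*})$, one obtains the key fact: $C_{N}$ is a cocircuit of $N$ if and only if $C_{N}$ is a minimal nonempty member of $\{\, Y \setminus D : Y \in \mathcal{C}(M^{*}),\ Y \cap C = \emptyset \,\}$. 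I would state and briefly justify this first.

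Given the key fact, the choice of $C_{M}$ is essentially forced. For a cocircuit $C_{N} \in \mathcal{C}(N^{*})$ it produces a cocircuit $C_{M} \in \mathcal{C}(M^{*})$ with $C_{M} \cap C = \emptyset$ and $C_{M} \setminus D = C_{N}$; in particular $C_{M} \subseteq C_{N} \cup D$. Note also that $C_{N} \subseteq E(N)$ is disjoint from $C$ and from $D$.

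It then remains to verify that $N \backslash C_{N}$ is a minor of $M \backslash C_{M}$, which is a short manipulation of deletion and contraction symbols. Since $C_{N}$ and $D$ are disjoint, $N \backslash C_{N} = M / C \backslash (D \cup C_{N})$. Because $C_{M} \subseteq D \cup C_{N}$ and $C_{M} \cap C = \emptyset$, I can split $C_{M}$ off the deletion set and move it past the contraction of $C$, obtaining
$$ N \backslash C_{N} \;=\; M / C \backslash C_{M} \backslash \bigl((D \cup C_{N}) \setminus C_{M}\bigr) \;=\; (M \backslash C_{M}) / C \backslash \bigl((D \cup C_{N}) \setminus C_{M}\bigr), $$
which displays $N \backslash C_{N}$ as a minor of $M \backslash C_{M}$. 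Along the way one checks that $C$ and $(D \cup C_{N}) \setminus C_{M}$ are disjoint subsets of $E(M) \setminus C_{M}$, which is immediate since $C$ is disjoint from each of $C_{N}$, $D$ and $C_{M}$.

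I do not expect a genuine obstacle here: the lemma is largely bookkeeping. The points that need care are getting the description of the cocircuits of a minor exactly right --- in particular remembering that the witnessing cocircuit $C_{M}$ of $M$ must avoid the contracted set $C$, and noticing that minimality is used only in the contraction step --- and keeping track of which sets are disjoint when commuting deletions and contractions. A more pedestrian alternative is induction along a chain $M = M_{0}, M_{1}, \dots, M_{k} = N$ of single-element deletions and contractions: for a step $M_{i+1} = M_{i} \backslash e$ one lifts the cocircuit either as it stands or by adjoining $e$, according to whether the witnessing cocircuit of $M_{i}$ omits or contains $e$; for a step $M_{i+1} = M_{i} / e$ one keeps the same set, which is automatically a cocircuit of $M_{i}$ avoiding $e$; and the conclusion then follows by composing these one-step statements and using transitivity of ``is a minor of''.
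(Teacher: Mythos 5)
Your proposal is correct and follows essentially the same route as the paper: write $N$ with explicit (disjoint) deletion and contraction sets, dualize, lift $C_N$ to a cocircuit $C_M$ of $M$ via the standard description of the circuits of a minor (so that $C_M$ meets $E(N)$ exactly in $C_N$ and its extra elements lie in the deleted set), and then commute deletions past the contraction to exhibit $N\backslash C_N$ as a minor of $M\backslash C_M$. You merely spell out the bookkeeping (and the optional single-element induction) that the paper leaves implicit.
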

\begin{proof}
If $N=M\dof X / Y$ then by duality $N^{*}=M / X \dof Y$. Therefore by the definitions
of contraction and deletion of a set, we have
that for any cocircuit $C_{N}\in\mathcal{C}(N^{*})$ there exists a cocircuit $C_{M}\in\mathcal{C}(M^{*})$ such
that 
\begin{itemize}
\item[(i)] $C_{N} \subseteq C_{M}$,
\item[(ii)] $E(N) \cap C_{M} = C_{N}$,
\end{itemize}
which in turn imply that  $C_{M} - C_{N} \subseteq X$. So we have 
\[
M\dof C_{M}=  M \dof \{C_{M} - C_{N}\} \dof C_{N} \succeq N \dof C_{N}
\]
\end{proof}

\begin{theorem}\label{th_ll3}
Let $M$ be a binary matroid such that all its cocircuits are graphic. Then, $M$ is signed-graphic if and only if $M$ has no minor 
isomorphic to $M^{*}(G_{17})$, $M^{*}(G_{19})$, $F_7$ or $F_7^{*}$.
\end{theorem}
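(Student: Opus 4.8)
The forward implication is immediate. By Theorem~\ref{th_rew} the class of binary signed-graphic matroids is minor-closed (it is exhibited there by excluded minors), and each of $M^{*}(G_{17})$, $M^{*}(G_{19})$, $F_7$, $F_7^{*}$ occurs in that excluded-minor list, so none of the four is signed-graphic and a signed-graphic $M$ can contain none of them as a minor. One should separately record that these four matroids do lie in the class under study, so that the list is not redundant: every cocircuit of $F_7$ and of $F_7^{*}$ is graphic because deleting a cocircuit leaves a binary matroid on at most six elements (hence with no $F_7$, $F_7^{*}$, $M^{*}(K_5)$ or $M^{*}(K_{3,3})$ minor), and one must check that every cocircuit of $M^{*}(G_{17})$ and $M^{*}(G_{19})$ is graphic, i.e. that contracting any single cycle of $G_{17}$ or of $G_{19}$ yields a planar graph.

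For the converse I would argue by contraposition, with Lemma~\ref{lem_1} as the main tool. Assume $M$ is binary, every cocircuit of $M$ is graphic, and $M$ is \emph{not} signed-graphic. By Theorem~\ref{th_rew}, $M$ has a minor $N$ isomorphic to one of $M^{*}(G_{1}),\dots,M^{*}(G_{29}),R_{15},R_{16},F_7,F_7^{*}$. If $N\cong M^{*}(G_{17}),M^{*}(G_{19}),F_7$ or $F_7^{*}$ there is nothing to prove, so suppose $N$ is one of the remaining $29$ matroids: $M^{*}(G_{i})$ with $i\in\{1,\dots,29\}\setminus\{17,19\}$, or $R_{15}$, or $R_{16}$. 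The goal is to contradict the hypothesis that all cocircuits of $M$ are graphic.

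The mechanism is the following. By Lemma~\ref{lem_1}, for \emph{any} cocircuit $C_{N}\in\mathcal{C}(N^{*})$ there is a cocircuit $C_{M}\in\mathcal{C}(M^{*})$ with $N\dof C_{N}\preceq M\dof C_{M}$; since $M\dof C_{M}$ is graphic by hypothesis and graphic matroids are minor-closed, $N\dof C_{N}$ must be graphic. Hence it is enough to exhibit, for each of the $29$ candidate matroids $N$ above, \emph{one} cocircuit $C_{N}$ with $N\dof C_{N}$ \emph{not} graphic, which gives the desired contradiction. By the excluded-minor characterization of binary graphic matroids — no minor isomorphic to $F_7$, $F_7^{*}$, $M^{*}(K_5)$ or $M^{*}(K_{3,3})$ — non-graphicness is certified by displaying such a forbidden minor inside $N\dof C_{N}$; in the cographic cases the obstruction is necessarily cographic, i.e. $M^{*}(K_5)$ or $M^{*}(K_{3,3})$, since a cographic matroid is regular and so cannot contain $F_7$ or $F_7^{*}$.

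Carrying out these $29$ verifications is the bulk of the work, and is precisely the case analysis this report supplies; it is computational rather than conceptual. For the $27$ cographic matroids it is cleanest to pass to the graphs: a cocircuit of $M^{*}(G_{i})$ is a cycle $C$ of $G_{i}$, one has $M^{*}(G_{i})\dof C = M^{*}(G_{i}/C)$, and this is non-graphic precisely when $G_{i}/C$ is non-planar, so for each $i\ne 17,19$ one must locate a cycle of $G_{i}$ whose contraction still contains a $K_5$- or $K_{3,3}$-minor. For $R_{15}$ and $R_{16}$, which are not cographic, one works directly from the binary representation matrices of the next section: pick a cocircuit (the complement of a hyperplane), delete the corresponding columns, and identify an $M^{*}(K_5)$ or $M^{*}(K_{3,3})$ minor in what remains. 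The main obstacle is thus the sheer bookkeeping of these cases — confirming in each instance that the chosen set really is a cocircuit and that the claimed forbidden minor is correctly embedded — together with the complementary checks mentioned in the first paragraph, namely that no such cycle exists for $G_{17}$ and $G_{19}$, which is exactly what forces those two matroids (and $F_7$, $F_7^{*}$) to remain on the final list.
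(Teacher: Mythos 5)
Your proposal is correct and follows essentially the same route as the paper: invoke Theorem~\ref{th_rew} to obtain a minor among the full excluded-minor list, and for each of the 29 candidates other than $M^{*}(G_{17})$, $M^{*}(G_{19})$, $F_7$, $F_7^{*}$ exhibit a cocircuit whose deletion has an $M^{*}(K_5)$- or $M^{*}(K_{3,3})$-minor (for the cographic ones, contracting a cycle of $G_i$ to get a non-planar graph), then use Lemma~\ref{lem_1} and minor-closedness of graphic matroids to contradict the hypothesis that all cocircuits of $M$ are graphic — exactly the case analysis the paper delegates to the MACEK computations. The only additions (the explicit easy direction and the non-redundancy check for the four listed matroids) are harmless extras not required for the stated equivalence.
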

\begin{proof}
$M$ must contain a minor isomorphic to some matroid in the set
\[
\mathcal{M}=\{M^{*}(G_1),\ldots,M^{*}(G_{16}),M^{*}(G_{18}),
M^{*}(G_{20}),\ldots,M^{*}(G_{29}), R_{15}^{*}, R_{16}^{*}\}.
\]
By case analysis, verified also by the MACEK software~\cite{Hlileny:07},
it can be shown that for each  matroid  $N\in \mathcal{M}$ there exists
a cocircuit $Y_{N}\in \mathcal{C}(N^{*})$ such that the matroid $N\dof {Y_{N}}$ 
contains an $M^{*}(K_{3,3})$ or an $M^{*}(K_{5})$ as a minor, which implies that $N\dof Y_{N}$ is not graphic. 
Therefore, by Lemma~\ref{lem_1}, there is a cocircuit $Y_{M}\in\mathcal{C}(M^{*})$ such that
$N\dof Y_{N}$ is a minor of $M\dof Y_{M}$.  Thus, $M\dof Y_{M}$
is not graphic which is in contradiction with our assumption that $M$ has graphic cocircuits.
\end{proof}

As already mentioned, this Technical Report is mainly devoted to the computations performed using the MACEK software~\cite{Hlileny:07} 
appearing in the proof of Theorem~\ref{th_ll3}. These computations are provided in detail in the next section.

\section*{MACEK computations}

Each case, i.e. matroid in $\mathcal{M}$ in the proof of Theorem~\ref{th_ll3}, will be examined separately and specifically:
\begin{itemize}
\item \noindent for each cographic matroids in $\mathcal{M}$, a compact representation matrix of its dual graphic matroid along with the associated graph 
($(G_1,\ldots, G_{16},G_{18},G_{20},\ldots,G_{29})$) are provided.
It is clear that due to matroid duality, it is enough to find a circuit $C$ in each $M\in \{M(G_1),\ldots,M(G_{16}),M(G_{18}),M(G_{20}),\ldots,M(G_{29})\}$ such that $M/C$ contains an $M(K_{3,3})$- or an $M(K_{5})$-minor.
The advantage of working with the duals of the cographic matroids in $\mathcal{M}$ is that someone could graphically see that by contracting 
a cycle (i.e. the one corresponding to $C$) in the associated graph, a minor isomorphic to $K_{3,3}$ or $K_5$ is contained in the resulting graph. Therefore, in that case, the MACEK computations may be seen just as a validation tool.  

\item \noindent for each of the two non-cographic matroid in $\mathcal{M}$ (i.e. $R_{15}$ and $R_{16}$), a compact representation matrix is provided
along with the cocircuit to be deleted. The MACEK commandss and the outputs showing that each of the resulting matroids contains an  $M^{*}(K_{3,3})$ 
or an $M^{*}(K_{5})$ as a minor are given.
\end{itemize}

\subsubsection*{The matroid $M(G_1)$:}
\FloatBarrier
\begin{figure*}[h]
\raisebox{-10ex}{\resizebox*{0.6\width}{!}{
\psfrag{r1}{\footnotesize $r_1$}
\psfrag{r2}{\footnotesize $r_2$}
\psfrag{r3}{\footnotesize $r_3$}
\psfrag{r4}{\footnotesize $r_4$}
\psfrag{r5}{\footnotesize $r_5$}
\psfrag{r6}{\footnotesize $r_6$}
\psfrag{r7}{\footnotesize $r_7$}
\psfrag{s1}{\footnotesize $s_1$}
\psfrag{s2}{\footnotesize $s_2$}
\psfrag{s3}{\footnotesize $s_3$}
\psfrag{s4}{\footnotesize $s_4$}
\psfrag{s5}{\footnotesize $s_5$}
\psfrag{s6}{\footnotesize $s_6$}
\psfrag{s7}{\footnotesize $s_7$}
\psfrag{s8}{\footnotesize $s_{8}$}
\psfrag{s9}{\footnotesize $s_{9}$}
\psfrag{s10}{\footnotesize $s_{10}$}
\psfrag{s11}{\footnotesize $s_{11}$}
\includegraphics [scale=0.8] {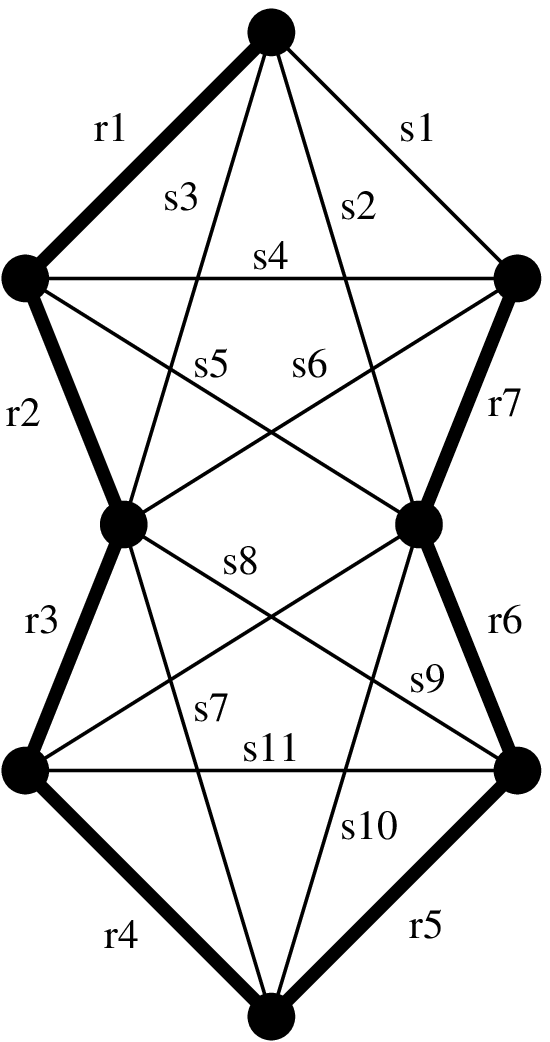}}}
$\mspace{30mu}$
$
g_1=
\kbordermatrix{\mbox{}&s_1&s_2&s_3&s_4&s_5&s_6&s_7&s_8&s_9&s_{10}&s_{11}\\
r_1 & {\;\,\! 1}  	   & {\;\,\! 1}          & {\;\,\! 1}          & {\;\,\! 0}          & {\;\,\! 0}          & {\;\,\! 0}          & {\;\,\! 0}         & {\;\,\! 0}          & {\;\,\! 0}               & {\;\,\! 0}          & {\;\,\! 0} \\
r_2 & {\;\,\! 1}           & {\;\,\! 1}          & {\;\,\! 1}          & {\;\,\! 1}          & {\;\,\! 1}          & {\;\,\! 0}          & {\;\,\! 0}         & {\;\,\! 0}          & {\;\,\! 0}               & {\;\,\! 0}          & {\;\,\! 0}  \\ 
r_3 & {\;\,\! 1}           & {\;\,\! 0}          & {\;\,\! 1}          & {\;\,\! 1}          & {\;\,\! 1}          & {\;\,\! 1}          & {\;\,\! 1}         & {\;\,\! 1}          & {\;\,\! 0}               & {\;\,\! 0}          & {\;\,\! 0} \\ 
r_4 & {\;\,\! 1}           & {\;\,\! 0}          & {\;\,\! 1}          & {\;\,\! 1}          & {\;\,\! 1}          & {\;\,\! 1}          & {\;\,\! 1}         & {\;\,\! 1}          & {\;\,\! 1}               & {\;\,\! 0}          & {\;\,\! 1}  \\ 
r_5 & {\;\,\! 1}           & {\;\,\! 0}          & {\;\,\! 1}          & {\;\,\! 1}          & {\;\,\! 1}          & {\;\,\! 1}          & {\;\,\! 0}         & {\;\,\! 1}          & {\;\,\! 1}               & {\;\,\! 1}          & {\;\,\! 1}   \\ 
r_6 & {\;\,\! 1}           & {\;\,\! 0}          & {\;\,\! 1}          & {\;\,\! 1}          & {\;\,\! 1}          & {\;\,\! 1}          & {\;\,\! 0}         & {\;\,\! 0}          & {\;\,\! 1}               & {\;\,\! 1}          & {\;\,\! 0}  \\ 
r_7 & {\;\,\! 1}           & {\;\,\! 0}          & {\;\,\! 0}          & {\;\,\! 1}          & {\;\,\! 0}          & {\;\,\! 1}          & {\;\,\! 0}         & {\;\,\! 0}          & {\;\,\! 0}               & {\;\,\! 0}          & {\;\,\! 0}   
}   
$
\label{fig_g1}
\end{figure*}
\noindent
\begin{center}$M(G_1)/\{r_1,s_1,s_3\}$ contains an $M(K_5)$-minor.\end{center}
{\tt \scriptsize {\bf Command:} ./macek -pGF2 '!contract 1;!contract -1;!contract -3;!minor' g1 '\{grK5,grK33\}'}\\
{\tt \scriptsize {\bf Output: }The \#1 matroid [g1$\sim$c1$\sim$c-1$\sim$c-3] +HAS+ minor \#1 [grK5] in the list \{grK5 grK33\}.} \\

\subsubsection*{The matroid $M(G_2)$:}
\FloatBarrier
\begin{figure*}[h]
\raisebox{-10ex}{\resizebox*{0.6\width}{!}{
\psfrag{r1}{\footnotesize $r_1$}
\psfrag{r2}{\footnotesize $r_2$}
\psfrag{r3}{\footnotesize $r_3$}
\psfrag{r4}{\footnotesize $r_4$}
\psfrag{r5}{\footnotesize $r_5$}
\psfrag{r6}{\footnotesize $r_6$}
\psfrag{r7}{\footnotesize $r_7$}
\psfrag{r8}{\footnotesize $r_8$}
\psfrag{s1}{\footnotesize $s_1$}
\psfrag{s2}{\footnotesize $s_2$}
\psfrag{s3}{\footnotesize $s_3$}
\psfrag{s4}{\footnotesize $s_4$}
\psfrag{s5}{\footnotesize $s_5$}
\psfrag{s6}{\footnotesize $s_6$}
\psfrag{s7}{\footnotesize $s_7$}
\psfrag{s8}{\footnotesize $s_8$}
\psfrag{s9}{\footnotesize $s_9$}
\includegraphics [scale=0.8] {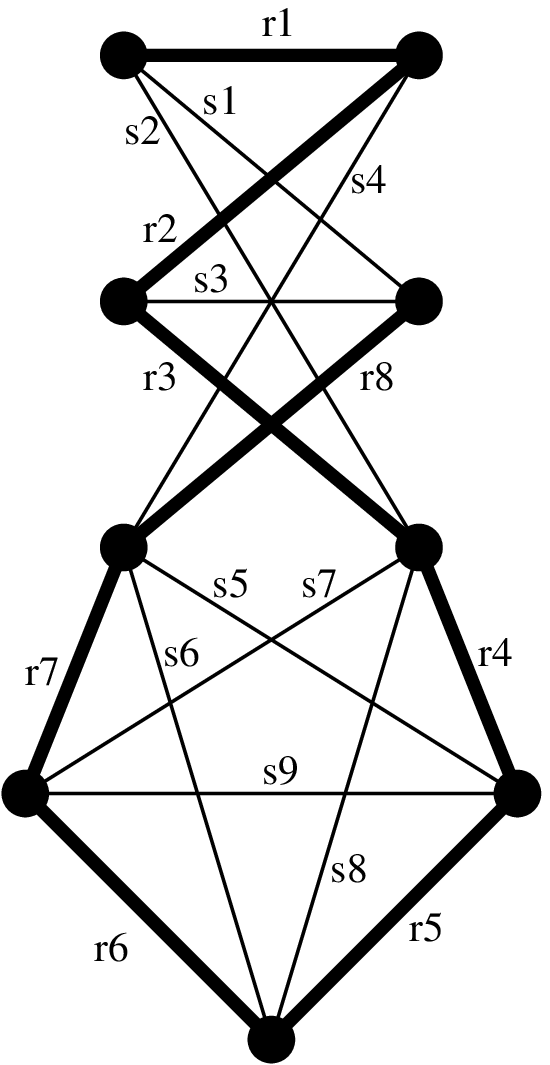}}}
$\mspace{30mu}$
$
g_2=
\kbordermatrix{\mbox{}&s_1&s_2&s_3&s_4&s_5&s_6&s_7&s_8&s_9\\
r_1 & {\;\,\! 1}  	   & {\;\,\! 1}          & {\;\,\! 0}          & {\;\,\! 0}          & {\;\,\! 0}          & {\;\,\! 0}          & {\;\,\! 0}         & {\;\,\! 0}          & {\;\,\! 0}  \\
r_2 & {\;\,\! 1}           & {\;\,\! 1}          & {\;\,\! 0}          & {\;\,\! 1}          & {\;\,\! 0}          & {\;\,\! 0}          & {\;\,\! 0}         & {\;\,\! 0}          & {\;\,\! 0}  \\ 
r_3 & {\;\,\! 1}           & {\;\,\! 1}          & {\;\,\! 1}          & {\;\,\! 1}          & {\;\,\! 0}          & {\;\,\! 0}          & {\;\,\! 0}         & {\;\,\! 0}          & {\;\,\! 0}   \\ 
r_4 & {\;\,\! 1}           & {\;\,\! 0}          & {\;\,\! 1}          & {\;\,\! 1}          & {\;\,\! 0}          & {\;\,\! 0}          & {\;\,\! 1}         & {\;\,\! 1}          & {\;\,\! 0}    \\ 
r_5 & {\;\,\! 1}           & {\;\,\! 0}          & {\;\,\! 1}          & {\;\,\! 1}          & {\;\,\! 1}          & {\;\,\! 0}          & {\;\,\! 1}         & {\;\,\! 1}          & {\;\,\! 1}    \\ 
r_6 & {\;\,\! 1}           & {\;\,\! 0}          & {\;\,\! 1}          & {\;\,\! 1}          & {\;\,\! 1}          & {\;\,\! 1}          & {\;\,\! 1}         & {\;\,\! 0}          & {\;\,\! 1}    \\ 
r_7 & {\;\,\! 1}           & {\;\,\! 0}          & {\;\,\! 1}          & {\;\,\! 1}          & {\;\,\! 1}          & {\;\,\! 1}          & {\;\,\! 0}         & {\;\,\! 0}          & {\;\,\! 0}   \\
r_8 & {\;\,\! 1}           & {\;\,\! 0}          & {\;\,\! 1}          & {\;\,\! 0}          & {\;\,\! 0}          & {\;\,\! 0}          & {\;\,\! 0}         & {\;\,\! 0}          & {\;\,\! 0}   
}   
$
\label{fig_g2}
\end{figure*}
\noindent
\begin{center} $M(G_2)/\{r_4,r_5,s_9\}$ contains an $M(K_{3,3})$-minor.\end{center}
{\tt \scriptsize {\bf Command:} ./macek -pGF2 '!contract 4;!contract 5;!contract -9;!minor' g2 '\{grK5,grK33\}'}\\
{\tt \scriptsize {\bf Output: }The \#1 matroid [g2$\sim$c4$\sim$c5$\sim$c-9] +HAS+ minor \#2 [grK33] in the list \{grK5 grK33\}.} \\

\subsubsection*{The matroid $M(G_3)$:}
\FloatBarrier
\begin{figure*}[h]
\raisebox{-10ex}{\resizebox*{0.6\width}{!}{
\psfrag{r1}{\footnotesize $r_1$}
\psfrag{r2}{\footnotesize $r_2$}
\psfrag{r3}{\footnotesize $r_3$}
\psfrag{r4}{\footnotesize $r_4$}
\psfrag{r5}{\footnotesize $r_5$}
\psfrag{r6}{\footnotesize $r_6$}
\psfrag{r7}{\footnotesize $r_7$}
\psfrag{r8}{\footnotesize $r_8$}
\psfrag{s1}{\footnotesize $s_1$}
\psfrag{s2}{\footnotesize $s_2$}
\psfrag{s3}{\footnotesize $s_3$}
\psfrag{s4}{\footnotesize $s_4$}
\psfrag{s5}{\footnotesize $s_5$}
\psfrag{s6}{\footnotesize $s_6$}
\psfrag{s7}{\footnotesize $s_7$}
\psfrag{s8}{\footnotesize $s_8$}
\psfrag{s9}{\footnotesize $s_9$}
\psfrag{s10}{\footnotesize $s_{10}$}
\includegraphics [scale=0.8] {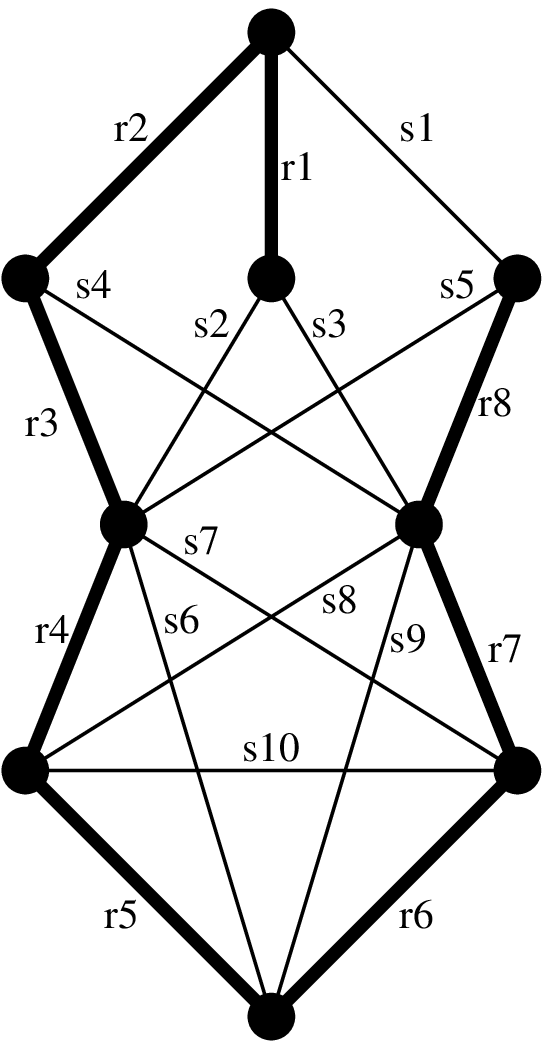}}}
$\mspace{30mu}$
$
g_3=
\kbordermatrix{\mbox{}&s_1&s_2&s_3&s_4&s_5&s_6&s_7&s_8&s_9&s_{10}\\
r_1 & {\;\,\! 0}  	   & {\;\,\! 1}          & {\;\,\! 1}          & {\;\,\! 0}          & {\;\,\! 0}          & {\;\,\! 0}          & {\;\,\! 0}         & {\;\,\! 0}          & {\;\,\! 0}    & {\;\,\! 0} \\
r_2 & {\;\,\! 1}           & {\;\,\! 1}          & {\;\,\! 1}          & {\;\,\! 0}          & {\;\,\! 0}          & {\;\,\! 0}          & {\;\,\! 0}         & {\;\,\! 0}          & {\;\,\! 0}    & {\;\,\! 0} \\ 
r_3 & {\;\,\! 1}           & {\;\,\! 1}          & {\;\,\! 1}          & {\;\,\! 1}          & {\;\,\! 0}          & {\;\,\! 0}          & {\;\,\! 0}         & {\;\,\! 0}          & {\;\,\! 0}    & {\;\,\! 0} \\ 
r_4 & {\;\,\! 1}           & {\;\,\! 0}          & {\;\,\! 1}          & {\;\,\! 1}          & {\;\,\! 1}          & {\;\,\! 1}          & {\;\,\! 1}         & {\;\,\! 0}          & {\;\,\! 0}    & {\;\,\! 0} \\ 
r_5 & {\;\,\! 1}           & {\;\,\! 0}          & {\;\,\! 1}          & {\;\,\! 1}          & {\;\,\! 1}          & {\;\,\! 1}          & {\;\,\! 1}         & {\;\,\! 1}          & {\;\,\! 0}    & {\;\,\! 1} \\ 
r_6 & {\;\,\! 1}           & {\;\,\! 0}          & {\;\,\! 1}          & {\;\,\! 1}          & {\;\,\! 1}          & {\;\,\! 0}          & {\;\,\! 1}         & {\;\,\! 1}          & {\;\,\! 1}    & {\;\,\! 1} \\ 
r_7 & {\;\,\! 1}           & {\;\,\! 0}          & {\;\,\! 1}          & {\;\,\! 1}          & {\;\,\! 1}          & {\;\,\! 0}          & {\;\,\! 0}         & {\;\,\! 1}          & {\;\,\! 1}    & {\;\,\! 0}  \\
r_8 & {\;\,\! 1}           & {\;\,\! 0}          & {\;\,\! 0}          & {\;\,\! 0}          & {\;\,\! 1}          & {\;\,\! 0}          & {\;\,\! 0}         & {\;\,\! 0}          & {\;\,\! 0}    & {\;\,\! 0}  
}   
$
\label{fig_g3}
\end{figure*}
\noindent
\begin{center} $M(G_3)/\{r_1,r_2,r_3,s_2\}$ contains an $M(K_5)$-minor.\end{center}
{\tt \scriptsize {\bf Command:} ./macek -pGF2 '!contract 1;!contract 2;!contract 3;!contract -2;!minor' g3 '\{grK5,grK33\}'}\\
{\tt \scriptsize {\bf Output: }The \#1 matroid [g3$\sim$c1$\sim$c2$\sim$c3$\sim$c-2] +HAS+ minor \#1 [grK5] in the list \{grK5 grK33\}.}

\subsubsection*{The matroid $M(G_4)$:}
\FloatBarrier
\begin{figure*}[h]
\raisebox{-10ex}{\resizebox*{0.6\width}{!}{
\psfrag{r1}{\footnotesize $r_1$}
\psfrag{r2}{\footnotesize $r_2$}
\psfrag{r3}{\footnotesize $r_3$}
\psfrag{r4}{\footnotesize $r_4$}
\psfrag{r5}{\footnotesize $r_5$}
\psfrag{r6}{\footnotesize $r_6$}
\psfrag{r7}{\footnotesize $r_7$}
\psfrag{r8}{\footnotesize $r_8$}
\psfrag{r9}{\footnotesize $r_9$}
\psfrag{s1}{\footnotesize $s_1$}
\psfrag{s2}{\footnotesize $s_2$}
\psfrag{s3}{\footnotesize $s_3$}
\psfrag{s4}{\footnotesize $s_4$}
\psfrag{s5}{\footnotesize $s_5$}
\psfrag{s6}{\footnotesize $s_6$}
\psfrag{s7}{\footnotesize $s_7$}
\psfrag{s8}{\footnotesize $s_8$}
\psfrag{s9}{\footnotesize $s_9$}
\includegraphics [scale=0.8] {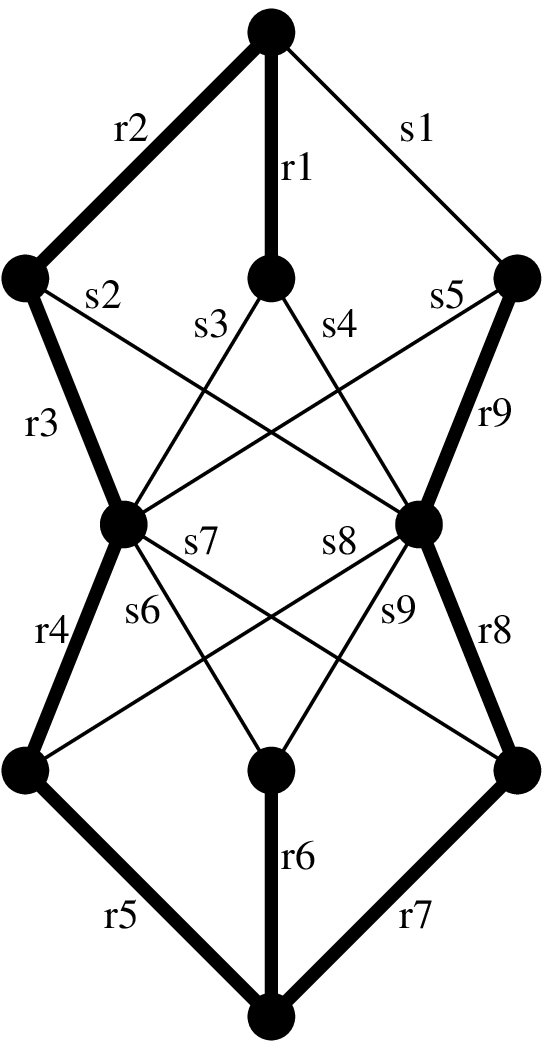}}}
$\mspace{30mu}$
$
g_4=
\kbordermatrix{\mbox{}&s_1&s_2&s_3&s_4&s_5&s_6&s_7&s_8&s_9\\
r_1 & {\;\,\! 0}  	   & {\;\,\! 0}          & {\;\,\! 1}          & {\;\,\! 1}          & {\;\,\! 0}          & {\;\,\! 0}          & {\;\,\! 0}         & {\;\,\! 0}          & {\;\,\! 0}    \\
r_2 & {\;\,\! 1}           & {\;\,\! 0}          & {\;\,\! 1}          & {\;\,\! 1}          & {\;\,\! 0}          & {\;\,\! 0}          & {\;\,\! 0}         & {\;\,\! 0}          & {\;\,\! 0}    \\ 
r_3 & {\;\,\! 1}           & {\;\,\! 1}          & {\;\,\! 1}          & {\;\,\! 1}          & {\;\,\! 0}          & {\;\,\! 0}          & {\;\,\! 0}         & {\;\,\! 0}          & {\;\,\! 0}     \\ 
r_4 & {\;\,\! 1}           & {\;\,\! 1}          & {\;\,\! 0}          & {\;\,\! 1}          & {\;\,\! 1}          & {\;\,\! 1}          & {\;\,\! 1}         & {\;\,\! 0}          & {\;\,\! 0}    \\ 
r_5 & {\;\,\! 1}           & {\;\,\! 1}          & {\;\,\! 0}          & {\;\,\! 1}          & {\;\,\! 1}          & {\;\,\! 1}          & {\;\,\! 1}         & {\;\,\! 1}          & {\;\,\! 0}     \\ 
r_6 & {\;\,\! 0}           & {\;\,\! 0}          & {\;\,\! 0}          & {\;\,\! 0}          & {\;\,\! 0}          & {\;\,\! 1}          & {\;\,\! 0}         & {\;\,\! 0}          & {\;\,\! 1}     \\ 
r_7 & {\;\,\! 1}           & {\;\,\! 1}          & {\;\,\! 0}          & {\;\,\! 1}          & {\;\,\! 1}          & {\;\,\! 0}          & {\;\,\! 1}         & {\;\,\! 1}          & {\;\,\! 1}     \\
r_8 & {\;\,\! 1}           & {\;\,\! 1}          & {\;\,\! 0}          & {\;\,\! 1}          & {\;\,\! 1}          & {\;\,\! 0}          & {\;\,\! 1}         & {\;\,\! 1}          & {\;\,\! 1}     \\
r_9 & {\;\,\! 1}           & {\;\,\! 0}          & {\;\,\! 0}          & {\;\,\! 0}          & {\;\,\! 1}          & {\;\,\! 0}          & {\;\,\! 0}         & {\;\,\! 0}          & {\;\,\! 0}      
}   
$
\label{fig_g4}
\end{figure*}
\noindent
\begin{center} $M(G_4)/\{r_4,r_5,r_6,s_6\}$ contains an $M(K_{3,3})$-minor.\end{center}
{\tt \scriptsize {\bf Command:} ./macek -pGF2 '!contract 4;!contract 5;!contract 6;!contract -6;!minor' g4 '\{grK5,grK33\}'}\\
{\tt \scriptsize {\bf Output: }The \#1 matroid [g4$\sim$c4$\sim$c5$\sim$c6$\sim$c-6] +HAS+ minor \#2 [grK33] in the list \{grK5 grK33\}.}

\subsubsection*{The matroid $M(G_5)$:}
\FloatBarrier
\begin{figure*}[h]
\raisebox{-10ex}{\resizebox*{0.6\width}{!}{
\psfrag{r1}{\footnotesize $r_1$}
\psfrag{r2}{\footnotesize $r_2$}
\psfrag{r3}{\footnotesize $r_3$}
\psfrag{r4}{\footnotesize $r_4$}
\psfrag{r5}{\footnotesize $r_5$}
\psfrag{r6}{\footnotesize $r_6$}
\psfrag{r7}{\footnotesize $r_7$}
\psfrag{r8}{\footnotesize $r_8$}
\psfrag{r9}{\footnotesize $r_9$}
\psfrag{s1}{\footnotesize $s_1$}
\psfrag{s2}{\footnotesize $s_2$}
\psfrag{s3}{\footnotesize $s_3$}
\psfrag{s4}{\footnotesize $s_4$}
\psfrag{s5}{\footnotesize $s_5$}
\psfrag{s6}{\footnotesize $s_6$}
\psfrag{s7}{\footnotesize $s_7$}
\psfrag{s8}{\footnotesize $s_8$}
\includegraphics [scale=0.8] {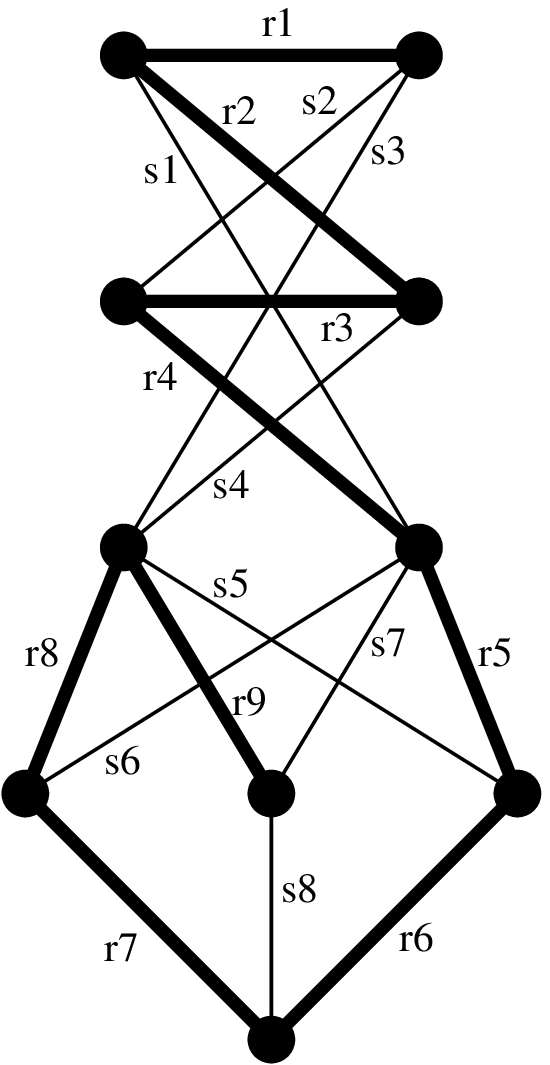}}}
$\mspace{30mu}$
$
g_5=
\kbordermatrix{\mbox{}&s_1&s_2&s_3&s_4&s_5&s_6&s_7&s_8\\
r_1 & {\;\,\! 0}           & {\;\,\! 1}          & {\;\,\! 1}          & {\;\,\! 0}          & {\;\,\! 0}          & {\;\,\! 0}          & {\;\,\! 0}         & {\;\,\! 0}          \\
r_2 & {\;\,\! 1}           & {\;\,\! 1}          & {\;\,\! 1}          & {\;\,\! 0}          & {\;\,\! 0}          & {\;\,\! 0}          & {\;\,\! 0}         & {\;\,\! 0}          \\ 
r_3 & {\;\,\! 1}           & {\;\,\! 1}          & {\;\,\! 1}          & {\;\,\! 1}          & {\;\,\! 0}          & {\;\,\! 0}          & {\;\,\! 0}         & {\;\,\! 0}            \\ 
r_4 & {\;\,\! 1}           & {\;\,\! 0}          & {\;\,\! 1}          & {\;\,\! 1}          & {\;\,\! 0}          & {\;\,\! 0}          & {\;\,\! 0}         & {\;\,\! 0}          \\ 
r_5 & {\;\,\! 0}           & {\;\,\! 0}          & {\;\,\! 1}          & {\;\,\! 1}          & {\;\,\! 0}          & {\;\,\! 1}          & {\;\,\! 1}         & {\;\,\! 0}          \\ 
r_6 & {\;\,\! 0}           & {\;\,\! 0}          & {\;\,\! 1}          & {\;\,\! 1}          & {\;\,\! 1}          & {\;\,\! 1}          & {\;\,\! 1}         & {\;\,\! 0}           \\ 
r_7 & {\;\,\! 0}           & {\;\,\! 0}          & {\;\,\! 1}          & {\;\,\! 1}          & {\;\,\! 1}          & {\;\,\! 1}          & {\;\,\! 1}         & {\;\,\! 1}            \\
r_8 & {\;\,\! 0}           & {\;\,\! 0}          & {\;\,\! 1}          & {\;\,\! 1}          & {\;\,\! 1}          & {\;\,\! 0}          & {\;\,\! 1}         & {\;\,\! 1}           \\
r_9 & {\;\,\! 0}           & {\;\,\! 0}          & {\;\,\! 0}          & {\;\,\! 0}          & {\;\,\! 0}          & {\;\,\! 0}          & {\;\,\! 1}         & {\;\,\! 1}               
}   
$
\label{fig_g5}
\end{figure*}
\noindent
\begin{center} $M(G_5)/\{r_7,r_8,r_9,s_8\}$ contains an $M(K_{3,3})$-minor.\end{center}
{\tt \scriptsize {\bf Command:}./macek -pGF2 '!contract 7;!contract 8;!contract 9;!contract -8;!minor' g5 '\{grK5,grK33\}'}\\
{\tt \scriptsize {\bf Output: }The \#1 matroid [g5$\sim$c7$\sim$c8$\sim$c9$\sim$c-8] +HAS+ minor \#2 [grK33] in the list \{grK5 grK33\}.}

\subsubsection*{The matroid $M(G_6)$:}
\FloatBarrier
\begin{figure*}[h]
\raisebox{-10ex}{\resizebox*{0.6\width}{!}{
\psfrag{r1}{\footnotesize $r_1$}
\psfrag{r2}{\footnotesize $r_2$}
\psfrag{r3}{\footnotesize $r_3$}
\psfrag{r4}{\footnotesize $r_4$}
\psfrag{r5}{\footnotesize $r_5$}
\psfrag{r6}{\footnotesize $r_6$}
\psfrag{r7}{\footnotesize $r_7$}
\psfrag{r8}{\footnotesize $r_8$}
\psfrag{r9}{\footnotesize $r_9$}
\psfrag{s1}{\footnotesize $s_1$}
\psfrag{s2}{\footnotesize $s_2$}
\psfrag{s3}{\footnotesize $s_3$}
\psfrag{s4}{\footnotesize $s_4$}
\psfrag{s5}{\footnotesize $s_5$}
\psfrag{s6}{\footnotesize $s_6$}
\psfrag{s7}{\footnotesize $s_7$}
\includegraphics [scale=0.8] {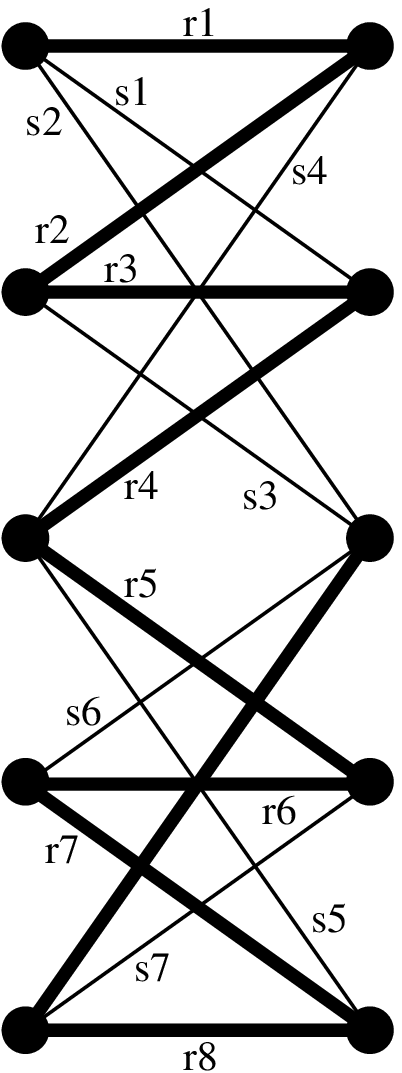}}}
$\mspace{30mu}$
$
g_6=
\kbordermatrix{\mbox{}&s_1&s_2&s_3&s_4&s_5&s_6&s_7\\
r_1 & {\;\,\! 1}  	   & {\;\,\! 1}          & {\;\,\! 0}          & {\;\,\! 0}          & {\;\,\! 0}          & {\;\,\! 0}          & {\;\,\! 0}            \\
r_2 & {\;\,\! 1}           & {\;\,\! 1}          & {\;\,\! 0}          & {\;\,\! 1}          & {\;\,\! 0}          & {\;\,\! 0}          & {\;\,\! 0}            \\ 
r_3 & {\;\,\! 1}           & {\;\,\! 1}          & {\;\,\! 1}          & {\;\,\! 1}          & {\;\,\! 0}          & {\;\,\! 0}          & {\;\,\! 0}             \\ 
r_4 & {\;\,\! 0}           & {\;\,\! 1}          & {\;\,\! 1}          & {\;\,\! 1}          & {\;\,\! 0}          & {\;\,\! 0}          & {\;\,\! 0}             \\ 
r_5 & {\;\,\! 0}           & {\;\,\! 1}          & {\;\,\! 1}          & {\;\,\! 0}          & {\;\,\! 1}          & {\;\,\! 0}          & {\;\,\! 0}            \\ 
r_6 & {\;\,\! 0}           & {\;\,\! 1}          & {\;\,\! 1}          & {\;\,\! 0}          & {\;\,\! 1}          & {\;\,\! 0}          & {\;\,\! 1}              \\ 
r_7 & {\;\,\! 0}           & {\;\,\! 1}          & {\;\,\! 1}          & {\;\,\! 0}          & {\;\,\! 1}          & {\;\,\! 1}          & {\;\,\! 1}              \\
r_8 & {\;\,\! 0}           & {\;\,\! 1}          & {\;\,\! 1}          & {\;\,\! 0}          & {\;\,\! 0}          & {\;\,\! 1}          & {\;\,\! 1}              \\
r_9 & {\;\,\! 0}           & {\;\,\! 1}          & {\;\,\! 1}          & {\;\,\! 0}          & {\;\,\! 0}          & {\;\,\! 1}          & {\;\,\! 0}                  
}   
$
\label{fig_g6}
\end{figure*}
\noindent
\begin{center} $M(G_6)/\{r_6,r_7,r_8,s_7\}$ contains an $M(K_{3,3})$-minor.\end{center}
{\tt \scriptsize {\bf Command:} ./macek -pGF2 '!contract 6;!contract 7;!contract 8;!contract -7;!minor' g6 '\{grK5,grK33\}'}\\
Output:  {\tt \scriptsize {\bf Output: } The \#1 matroid [g6$\sim$c6$\sim$c7$\sim$c8$\sim$c-7] +HAS+ minor \#2 [grK33] in the list \{grK5 grK33\}.}

\subsubsection*{The matroid $M(G_7)$:}
\FloatBarrier
\begin{figure*}[h]
\raisebox{-10ex}{\resizebox*{0.6\width}{!}{
\psfrag{r1}{\footnotesize $r_1$}
\psfrag{r2}{\footnotesize $r_2$}
\psfrag{r3}{\footnotesize $r_3$}
\psfrag{r4}{\footnotesize $r_4$}
\psfrag{r5}{\footnotesize $r_5$}
\psfrag{r6}{\footnotesize $r_6$}
\psfrag{s1}{\footnotesize $s_1$}
\psfrag{s2}{\footnotesize $s_2$}
\psfrag{s3}{\footnotesize $s_3$}
\psfrag{s4}{\footnotesize $s_4$}
\psfrag{s5}{\footnotesize $s_5$}
\psfrag{s6}{\footnotesize $s_6$}
\psfrag{s7}{\footnotesize $s_7$}
\psfrag{s8}{\footnotesize $s_8$}
\psfrag{s9}{\footnotesize $s_9$}
\psfrag{s10}{\footnotesize $s_{10}$}
\psfrag{s11}{\footnotesize $s_{11}$}
\includegraphics [scale=0.8] {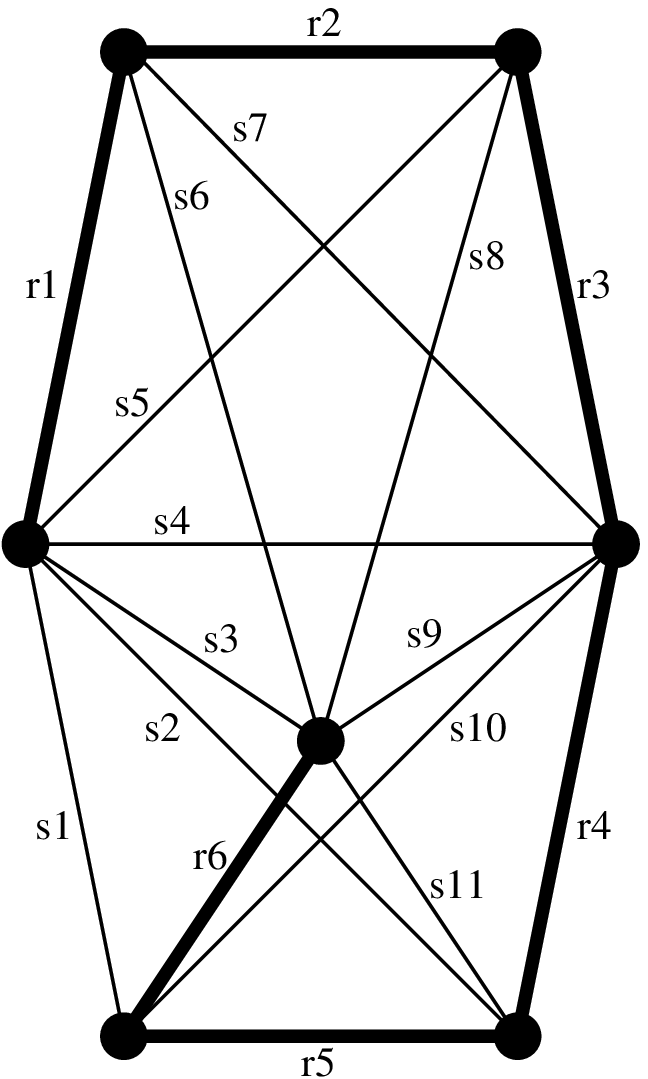}}}
$\mspace{30mu}$
$
g_7=
\kbordermatrix{\mbox{}&s_1&s_2&s_3&s_4&s_5&s_6&s_7&s_8&s_9&s_{10}&s_{11}\\
r_1 & {\;\,\! 1}  	   & {\;\,\! 1}          & {\;\,\! 1}          & {\;\,\! 1}          & {\;\,\! 1}          & {\;\,\! 0}          & {\;\,\! 1}           & {\;\,\! 0}       & {\;\,\! 0}      & {\;\,\! 0}       & {\;\,\! 0}\\
r_2 & {\;\,\! 1}           & {\;\,\! 1}          & {\;\,\! 1}          & {\;\,\! 1}          & {\;\,\! 1}          & {\;\,\! 1}          & {\;\,\! 1}           & {\;\,\! 0}       & {\;\,\! 0}      & {\;\,\! 0}       & {\;\,\! 0}\\ 
r_3 & {\;\,\! 1}           & {\;\,\! 1}          & {\;\,\! 1}          & {\;\,\! 1}          & {\;\,\! 0}          & {\;\,\! 1}          & {\;\,\! 0}           & {\;\,\! 1}       & {\;\,\! 0}      & {\;\,\! 0}       & {\;\,\! 0}\\ 
r_4 & {\;\,\! 1}           & {\;\,\! 1}          & {\;\,\! 1}          & {\;\,\! 0}          & {\;\,\! 0}          & {\;\,\! 1}          & {\;\,\! 0}           & {\;\,\! 1}       & {\;\,\! 1}      & {\;\,\! 1}       & {\;\,\! 0}\\
r_5 & {\;\,\! 1}           & {\;\,\! 0}          & {\;\,\! 1}          & {\;\,\! 0}          & {\;\,\! 0}          & {\;\,\! 1}          & {\;\,\! 0}           & {\;\,\! 1}       & {\;\,\! 1}      & {\;\,\! 1}       & {\;\,\! 1}\\ 
r_6 & {\;\,\! 0}           & {\;\,\! 0}          & {\;\,\! 1}          & {\;\,\! 0}          & {\;\,\! 0}          & {\;\,\! 1}          & {\;\,\! 0}           & {\;\,\! 1}       & {\;\,\! 1}      & {\;\,\! 0}       & {\;\,\! 1}          
}   
$
\label{fig_g7}
\end{figure*}
\noindent
\begin{center} $M(G_7)/\{r_1,r_2,s_5\}$ contains an $M(K_5)$-minor.\end{center}
{\tt \scriptsize {\bf Command:} ./macek -pGF2 '!contract 1;!contract 2;!contract -5;!minor' g7 '\{grK5,grK33\}'}\\
{\tt \scriptsize {\bf Output: }The \#1 matroid [g7$\sim$c1$\sim$c2$\sim$c-5] +HAS+ minor \#1 [grK5] in the list \{grK5 grK33\}.}

\subsubsection*{The matroid $M(G_8)$:}
\FloatBarrier
\begin{figure*}[h]
\raisebox{-10ex}{\resizebox*{0.6\width}{!}{
\psfrag{r1}{\footnotesize $r_1$}
\psfrag{r2}{\footnotesize $r_2$}
\psfrag{r3}{\footnotesize $r_3$}
\psfrag{r4}{\footnotesize $r_4$}
\psfrag{r5}{\footnotesize $r_5$}
\psfrag{r6}{\footnotesize $r_6$}
\psfrag{r7}{\footnotesize $r_7$}
\psfrag{s1}{\footnotesize $s_1$}
\psfrag{s2}{\footnotesize $s_2$}
\psfrag{s3}{\footnotesize $s_3$}
\psfrag{s4}{\footnotesize $s_4$}
\psfrag{s5}{\footnotesize $s_5$}
\psfrag{s6}{\footnotesize $s_6$}
\psfrag{s7}{\footnotesize $s_7$}
\psfrag{s8}{\footnotesize $s_8$}
\psfrag{s9}{\footnotesize $s_9$}
\psfrag{s10}{\footnotesize $s_{10}$}
\includegraphics [scale=0.8] {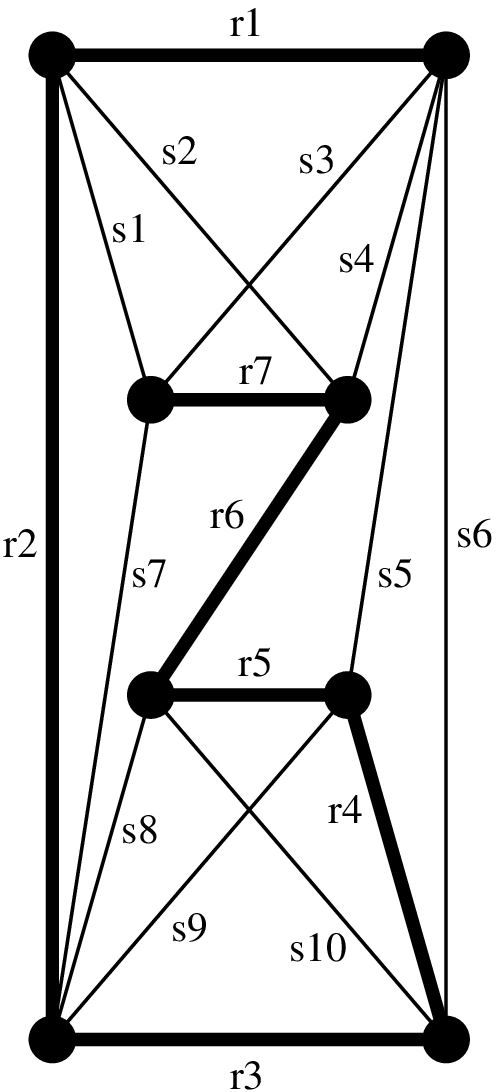}}}
$\mspace{30mu}$
$
g_8=
\kbordermatrix{\mbox{}&s_1&s_2&s_3&s_4&s_5&s_6&s_7&s_8&s_9&s_{10}\\
r_1 & {\;\,\! 1}  	   & {\;\,\! 1}          & {\;\,\! 1}          & {\;\,\! 1}          & {\;\,\! 1}          & {\;\,\! 0}          & {\;\,\! 1}           & {\;\,\! 0}       & {\;\,\! 0}      & {\;\,\! 0}   \\
r_2 & {\;\,\! 1}           & {\;\,\! 1}          & {\;\,\! 1}          & {\;\,\! 1}          & {\;\,\! 1}          & {\;\,\! 1}          & {\;\,\! 1}           & {\;\,\! 0}       & {\;\,\! 0}      & {\;\,\! 0}    \\ 
r_3 & {\;\,\! 1}           & {\;\,\! 1}          & {\;\,\! 1}          & {\;\,\! 1}          & {\;\,\! 0}          & {\;\,\! 1}          & {\;\,\! 0}           & {\;\,\! 1}       & {\;\,\! 0}      & {\;\,\! 0}     \\ 
r_4 & {\;\,\! 1}           & {\;\,\! 1}          & {\;\,\! 1}          & {\;\,\! 0}          & {\;\,\! 0}          & {\;\,\! 1}          & {\;\,\! 0}           & {\;\,\! 1}       & {\;\,\! 1}      & {\;\,\! 1}     \\
r_5 & {\;\,\! 1}           & {\;\,\! 0}          & {\;\,\! 1}          & {\;\,\! 0}          & {\;\,\! 0}          & {\;\,\! 1}          & {\;\,\! 0}           & {\;\,\! 1}       & {\;\,\! 1}      & {\;\,\! 1}     \\ 
r_6 & {\;\,\! 0}           & {\;\,\! 0}          & {\;\,\! 1}          & {\;\,\! 0}          & {\;\,\! 0}          & {\;\,\! 1}          & {\;\,\! 0}           & {\;\,\! 1}       & {\;\,\! 1}      & {\;\,\! 0}     \\          
r_7 & {\;\,\! 0}           & {\;\,\! 0}          & {\;\,\! 1}          & {\;\,\! 0}          & {\;\,\! 0}          & {\;\,\! 1}          & {\;\,\! 0}           & {\;\,\! 1}       & {\;\,\! 1}      & {\;\,\! 0}       
}
$
\label{fig_g8}
\end{figure*}
\noindent
\begin{center} $M(G_8)/\{r_4,s_5,s_6\}$ contains an $M(K_5)$-minor.\end{center}
 {\tt \scriptsize {\bf Command:} ./macek -pGF2 '!contract 4;!contract -5;!contract -6;!minor' g8 '\{grK5,grK33\}'}\\
{\tt \scriptsize {\bf Output: }
The \#1 matroid [g8$\sim$c4$\sim$c-5$\sim$c-6] +HAS+ minor \#1 [grK5] in the list \{grK5 grK33\}.}
\FloatBarrier
\subsubsection*{The matroid $M(G_9)$:}
\FloatBarrier
\begin{figure*}[h]
\raisebox{-10ex}{\resizebox*{0.6\width}{!}{
\psfrag{r1}{\footnotesize $r_1$}
\psfrag{r2}{\footnotesize $r_2$}
\psfrag{r3}{\footnotesize $r_3$}
\psfrag{r4}{\footnotesize $r_4$}
\psfrag{r5}{\footnotesize $r_5$}
\psfrag{r6}{\footnotesize $r_6$}
\psfrag{r7}{\footnotesize $r_7$}
\psfrag{r8}{\footnotesize $r_8$}
\psfrag{s1}{\footnotesize $s_1$}
\psfrag{s2}{\footnotesize $s_2$}
\psfrag{s3}{\footnotesize $s_3$}
\psfrag{s4}{\footnotesize $s_4$}
\psfrag{s5}{\footnotesize $s_5$}
\psfrag{s6}{\footnotesize $s_6$}
\psfrag{s7}{\footnotesize $s_7$}
\psfrag{s8}{\footnotesize $s_8$}
\includegraphics [scale=0.8] {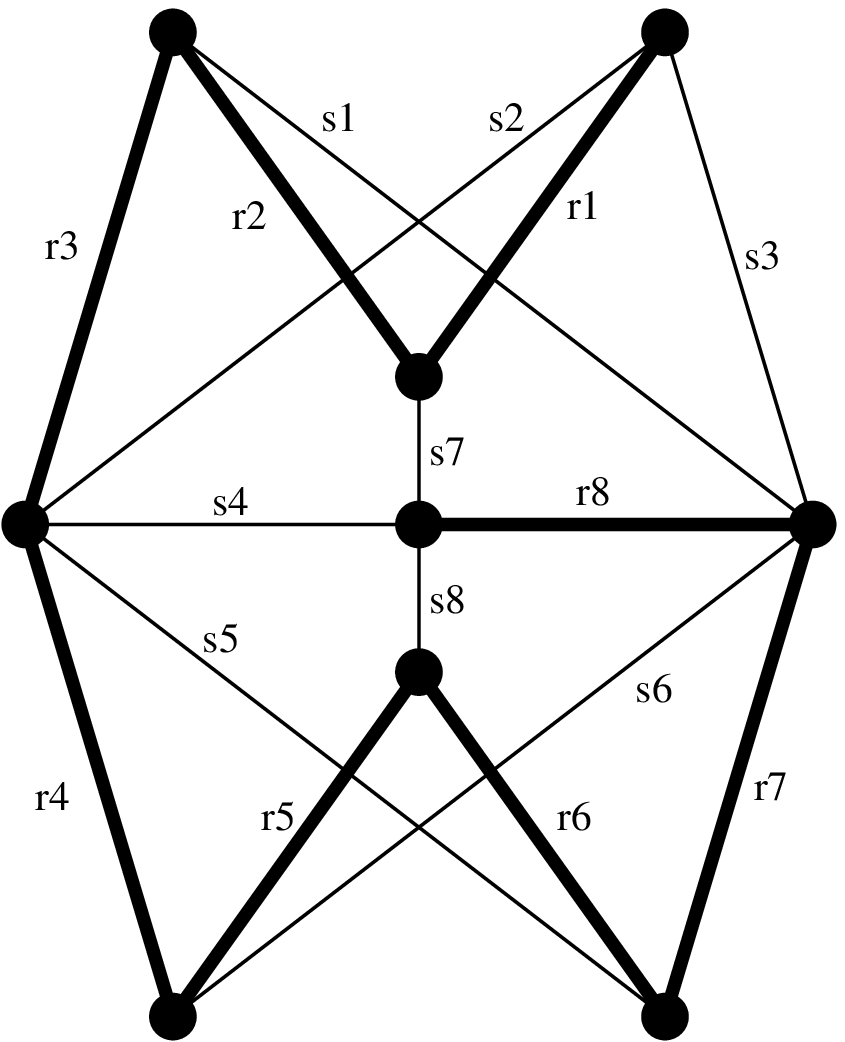}}}
$\mspace{30mu}$
$
g_9=
\kbordermatrix{\mbox{}&s_1&s_2&s_3&s_4&s_5&s_6&s_7&s_8\\
r_1 & {\;\,\! 0}  	   & {\;\,\! 1}          & {\;\,\! 1}          & {\;\,\! 0}          & {\;\,\! 0}          & {\;\,\! 0}          & {\;\,\! 0}           & {\;\,\! 0}      \\
r_2 & {\;\,\! 0}           & {\;\,\! 1}          & {\;\,\! 1}          & {\;\,\! 0}          & {\;\,\! 0}          & {\;\,\! 0}          & {\;\,\! 1}           & {\;\,\! 0}      \\ 
r_3 & {\;\,\! 1}           & {\;\,\! 1}          & {\;\,\! 1}          & {\;\,\! 0}          & {\;\,\! 0}          & {\;\,\! 0}          & {\;\,\! 1}           & {\;\,\! 0}       \\ 
r_4 & {\;\,\! 1}           & {\;\,\! 0}          & {\;\,\! 1}          & {\;\,\! 1}          & {\;\,\! 1}          & {\;\,\! 0}          & {\;\,\! 1}           & {\;\,\! 0}        \\
r_5 & {\;\,\! 1}           & {\;\,\! 0}          & {\;\,\! 1}          & {\;\,\! 1}          & {\;\,\! 1}          & {\;\,\! 1}          & {\;\,\! 1}           & {\;\,\! 0}         \\ 
r_6 & {\;\,\! 1}           & {\;\,\! 0}          & {\;\,\! 1}          & {\;\,\! 1}          & {\;\,\! 1}          & {\;\,\! 1}          & {\;\,\! 1}           & {\;\,\! 1}       \\          
r_7 & {\;\,\! 1}           & {\;\,\! 0}          & {\;\,\! 1}          & {\;\,\! 1}          & {\;\,\! 0}          & {\;\,\! 1}          & {\;\,\! 1}           & {\;\,\! 1}     \\
r_8 & {\;\,\! 0}           & {\;\,\! 0}          & {\;\,\! 0}          & {\;\,\! 1}          & {\;\,\! 0}          & {\;\,\! 0}          & {\;\,\! 1}           & {\;\,\! 1}            
}
$
\label{fig_g9}
\end{figure*}
\noindent
\begin{center} $M(G_9)/\{r_1,r_2,r_3,s_2\}$ contains an $M(K_{3,3})$-minor.\end{center}
{\tt \scriptsize {\bf Command:} ./macek -pGF2 '!contract 1;!contract 2;!contract 3;!contract -2;!minor' g9 '\{grK5,grK33\}'}\\
{\tt \scriptsize {\bf Output: }
The \#1 matroid [g9$\sim$c1$\sim$c2$\sim$c3$\sim$c-2] +HAS+ minor \#2 [grK33] in the list \{grK5 grK33\}.}

\subsubsection*{The matroid $M(G_{10})$:}
\FloatBarrier
\begin{figure*}[h]
\raisebox{-10ex}{\resizebox*{0.6\width}{!}{
\psfrag{r1}{\footnotesize $r_1$}
\psfrag{r2}{\footnotesize $r_2$}
\psfrag{r3}{\footnotesize $r_3$}
\psfrag{r4}{\footnotesize $r_4$}
\psfrag{r5}{\footnotesize $r_5$}
\psfrag{r6}{\footnotesize $r_6$}
\psfrag{r7}{\footnotesize $r_7$}
\psfrag{s1}{\footnotesize $s_1$}
\psfrag{s2}{\footnotesize $s_2$}
\psfrag{s3}{\footnotesize $s_3$}
\psfrag{s4}{\footnotesize $s_4$}
\psfrag{s5}{\footnotesize $s_5$}
\psfrag{s6}{\footnotesize $s_6$}
\psfrag{s7}{\footnotesize $s_7$}
\psfrag{s8}{\footnotesize $s_8$}
\psfrag{s9}{\footnotesize $s_9$}
\includegraphics [scale=0.8] {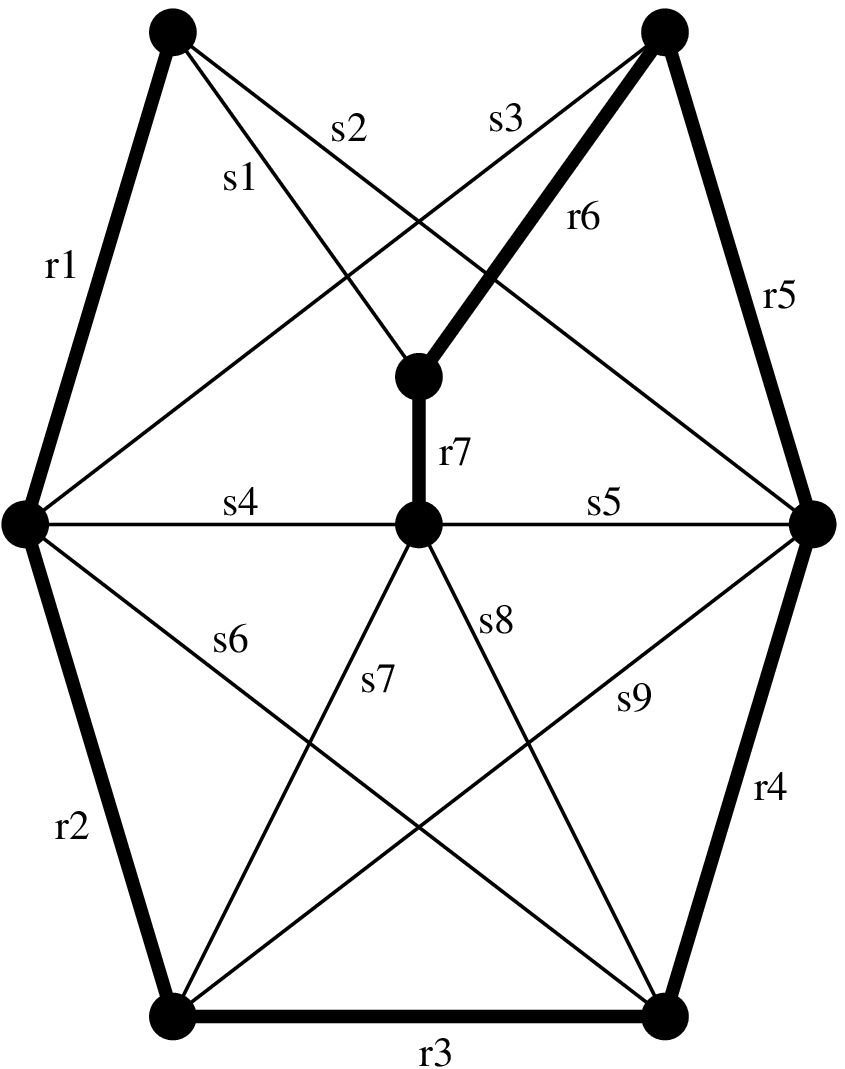}}}
$\mspace{30mu}$
$
g_{10}=
\kbordermatrix{\mbox{}&s_1&s_2&s_3&s_4&s_5&s_6&s_7&s_8&s_9\\
r_1 & {\;\,\! 1}  	   & {\;\,\! 1}          & {\;\,\! 0}          & {\;\,\! 0}          & {\;\,\! 0}          & {\;\,\! 0}          & {\;\,\! 0}           & {\;\,\! 0}     & {\;\,\! 0} \\
r_2 & {\;\,\! 1}           & {\;\,\! 1}          & {\;\,\! 1}          & {\;\,\! 1}          & {\;\,\! 0}          & {\;\,\! 1}          & {\;\,\! 0}           & {\;\,\! 0}     & {\;\,\! 0} \\ 
r_3 & {\;\,\! 1}           & {\;\,\! 1}          & {\;\,\! 1}          & {\;\,\! 1}          & {\;\,\! 0}          & {\;\,\! 1}          & {\;\,\! 1}           & {\;\,\! 0}     & {\;\,\! 1} \\ 
r_4 & {\;\,\! 1}           & {\;\,\! 1}          & {\;\,\! 1}          & {\;\,\! 1}          & {\;\,\! 0}          & {\;\,\! 0}          & {\;\,\! 1}           & {\;\,\! 1}     & {\;\,\! 1}\\
r_5 & {\;\,\! 1}           & {\;\,\! 0}          & {\;\,\! 1}          & {\;\,\! 1}          & {\;\,\! 1}          & {\;\,\! 0}          & {\;\,\! 1}           & {\;\,\! 1}     & {\;\,\! 0}    \\ 
r_6 & {\;\,\! 1}           & {\;\,\! 0}          & {\;\,\! 0}          & {\;\,\! 1}          & {\;\,\! 1}          & {\;\,\! 0}          & {\;\,\! 1}           & {\;\,\! 1}     & {\;\,\! 0}\\          
r_7 & {\;\,\! 0}           & {\;\,\! 0}          & {\;\,\! 0}          & {\;\,\! 1}          & {\;\,\! 1}          & {\;\,\! 0}          & {\;\,\! 1}           & {\;\,\! 1}     & {\;\,\! 0}
}
$
\label{fig_g10}
\end{figure*}
\noindent
\begin{center} $M(G_{10})/\{r_1,r_6,s_1,s_3\}$ contains an $M(K_5)$-minor.\end{center}
{\tt \scriptsize {\bf Command:} ./macek -pGF2 '!contract 1;!contract 6;!contract -1;!contract -3;!minor' g10 '\{grK5,grK33\}'}\\
{\tt \scriptsize {\bf Output: }
The \#1 matroid [g10$\sim$c1$\sim$c6$\sim$c-1$\sim$c-3] +HAS+ minor \#1 [grK5] in the list \{grK5 grK33\}.}

\subsubsection*{The matroid $M(G_{11})$:}
\FloatBarrier
\begin{figure*}[h]
\raisebox{-10ex}{\resizebox*{0.6\width}{!}{
\psfrag{r1}{\footnotesize $r_1$}
\psfrag{r2}{\footnotesize $r_2$}
\psfrag{r3}{\footnotesize $r_3$}
\psfrag{r4}{\footnotesize $r_4$}
\psfrag{r5}{\footnotesize $r_5$}
\psfrag{r6}{\footnotesize $r_6$}
\psfrag{r7}{\footnotesize $r_7$}
\psfrag{r8}{\footnotesize $r_8$}
\psfrag{s1}{\footnotesize $s_1$}
\psfrag{s2}{\footnotesize $s_2$}
\psfrag{s3}{\footnotesize $s_3$}
\psfrag{s4}{\footnotesize $s_4$}
\psfrag{s5}{\footnotesize $s_5$}
\psfrag{s6}{\footnotesize $s_6$}
\psfrag{s7}{\footnotesize $s_7$}
\psfrag{s8}{\footnotesize $s_8$}
\includegraphics [scale=0.8] {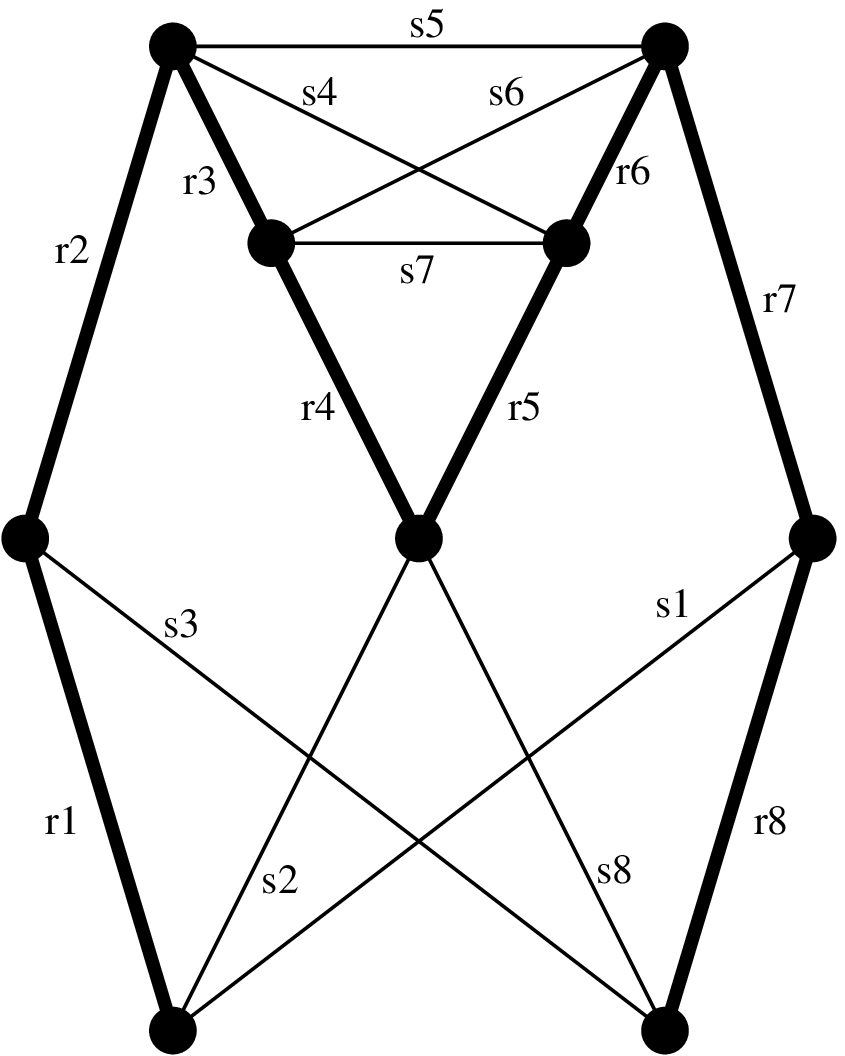}}}
$\mspace{30mu}$
$
g_{11}=
\kbordermatrix{\mbox{}&s_1&s_2&s_3&s_4&s_5&s_6&s_7&s_8\\
r_1 & {\;\,\! 1}  	   & {\;\,\! 1}          & {\;\,\! 0}          & {\;\,\! 0}          & {\;\,\! 0}          & {\;\,\! 0}          & {\;\,\! 0}           & {\;\,\! 0}     \\
r_2 & {\;\,\! 1}           & {\;\,\! 1}          & {\;\,\! 1}          & {\;\,\! 0}          & {\;\,\! 0}          & {\;\,\! 0}          & {\;\,\! 0}           & {\;\,\! 0}     \\ 
r_3 & {\;\,\! 1}           & {\;\,\! 1}          & {\;\,\! 1}          & {\;\,\! 1}          & {\;\,\! 1}          & {\;\,\! 0}          & {\;\,\! 0}           & {\;\,\! 0}      \\ 
r_4 & {\;\,\! 1}           & {\;\,\! 1}          & {\;\,\! 1}          & {\;\,\! 1}          & {\;\,\! 1}          & {\;\,\! 1}          & {\;\,\! 1}           & {\;\,\! 0}     \\
r_5 & {\;\,\! 1}           & {\;\,\! 0}          & {\;\,\! 1}          & {\;\,\! 1}          & {\;\,\! 1}          & {\;\,\! 1}          & {\;\,\! 1}           & {\;\,\! 1}       \\ 
r_6 & {\;\,\! 1}           & {\;\,\! 0}          & {\;\,\! 1}          & {\;\,\! 0}          & {\;\,\! 1}          & {\;\,\! 1}          & {\;\,\! 0}           & {\;\,\! 1}     \\          
r_7 & {\;\,\! 1}           & {\;\,\! 0}          & {\;\,\! 1}          & {\;\,\! 0}          & {\;\,\! 0}          & {\;\,\! 0}          & {\;\,\! 0}           & {\;\,\! 1}    \\
r_8 & {\;\,\! 0}           & {\;\,\! 0}          & {\;\,\! 1}          & {\;\,\! 0}          & {\;\,\! 0}          & {\;\,\! 0}          & {\;\,\! 0}           & {\;\,\! 1}    
}
$
\label{fig_g11}
\end{figure*}
\noindent
\begin{center} $M(G_{11})/\{r_3,r_6,s_5,s_7\}$ contains an $M(K_{3,3})$-minor.\end{center}
{\tt \scriptsize {\bf Command:} ./macek -pGF2 '!contract 3;!contract 6;!contract -5;!contract -7;!minor' g11 '\{grK5,grK33\}'}\\
{\tt \scriptsize {\bf Output: }
The \#1 matroid [g11$\sim$c3$\sim$c6$\sim$c-5$\sim$c-7] +HAS+ minor \#2 [grK33] in the list \{grK5 grK33\}.}

\subsubsection*{The matroid $M(G_{12})$:}
\FloatBarrier
\begin{figure*}[h]
\raisebox{-10ex}{\resizebox*{0.6\width}{!}{
\psfrag{r1}{\footnotesize $r_1$}
\psfrag{r2}{\footnotesize $r_2$}
\psfrag{r3}{\footnotesize $r_3$}
\psfrag{r4}{\footnotesize $r_4$}
\psfrag{r5}{\footnotesize $r_5$}
\psfrag{r6}{\footnotesize $r_6$}
\psfrag{r7}{\footnotesize $r_7$}
\psfrag{r8}{\footnotesize $r_8$}
\psfrag{s1}{\footnotesize $s_1$}
\psfrag{s2}{\footnotesize $s_2$}
\psfrag{s3}{\footnotesize $s_3$}
\psfrag{s4}{\footnotesize $s_4$}
\psfrag{s5}{\footnotesize $s_5$}
\psfrag{s6}{\footnotesize $s_6$}
\psfrag{s7}{\footnotesize $s_7$}
\includegraphics [scale=0.8] {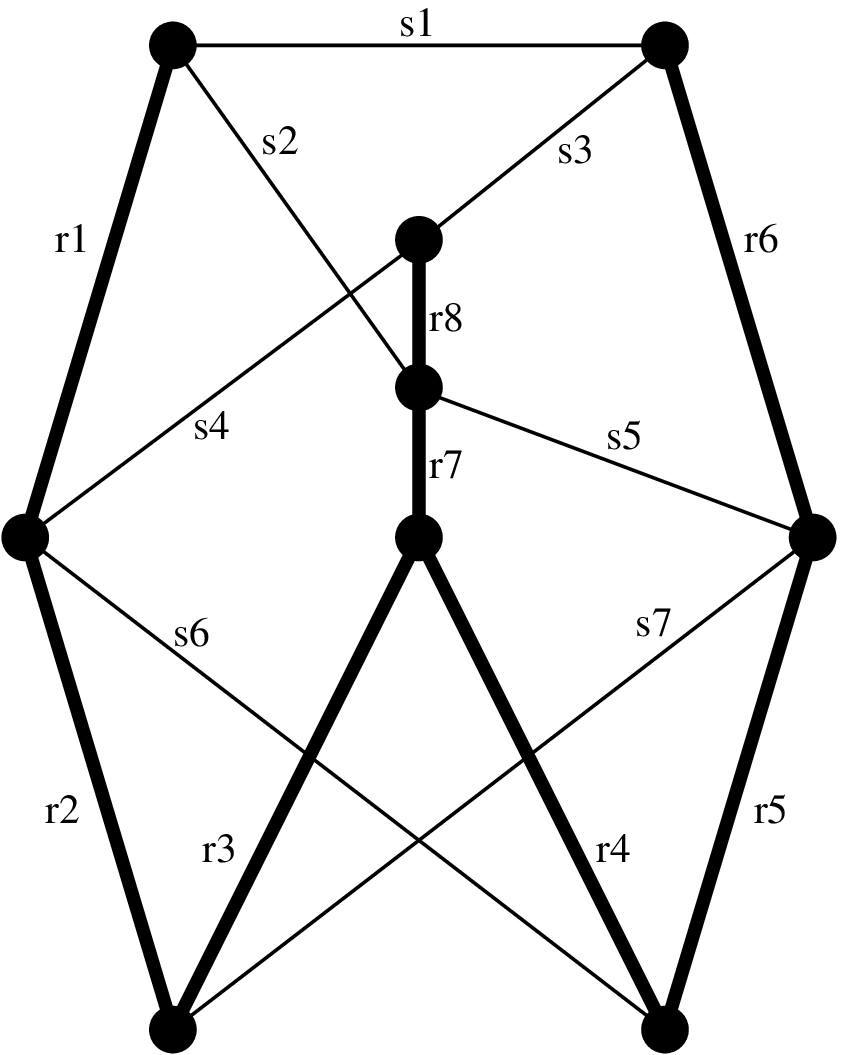}}}
$\mspace{30mu}$
$
g_{12}=
\kbordermatrix{\mbox{}&s_1&s_2&s_3&s_4&s_5&s_6&s_7\\
r_1 & {\;\,\! 1}  	   & {\;\,\! 1}          & {\;\,\! 0}          & {\;\,\! 0}          & {\;\,\! 0}          & {\;\,\! 0}          & {\;\,\! 0}               \\
r_2 & {\;\,\! 1}           & {\;\,\! 1}          & {\;\,\! 0}          & {\;\,\! 1}          & {\;\,\! 0}          & {\;\,\! 1}          & {\;\,\! 0}                \\ 
r_3 & {\;\,\! 1}           & {\;\,\! 1}          & {\;\,\! 0}          & {\;\,\! 1}          & {\;\,\! 0}          & {\;\,\! 1}          & {\;\,\! 1}                 \\ 
r_4 & {\;\,\! 1}           & {\;\,\! 0}          & {\;\,\! 1}          & {\;\,\! 0}          & {\;\,\! 1}          & {\;\,\! 1}          & {\;\,\! 1}               \\
r_5 & {\;\,\! 1}           & {\;\,\! 0}          & {\;\,\! 1}          & {\;\,\! 0}          & {\;\,\! 1}          & {\;\,\! 0}          & {\;\,\! 1}                 \\ 
r_6 & {\;\,\! 1}           & {\;\,\! 0}          & {\;\,\! 1}          & {\;\,\! 0}          & {\;\,\! 0}          & {\;\,\! 0}          & {\;\,\! 0}               \\          
r_7 & {\;\,\! 0}           & {\;\,\! 1}          & {\;\,\! 1}          & {\;\,\! 1}          & {\;\,\! 1}          & {\;\,\! 0}          & {\;\,\! 0}             \\
r_8 & {\;\,\! 0}           & {\;\,\! 0}          & {\;\,\! 1}          & {\;\,\! 1}          & {\;\,\! 1}          & {\;\,\! 0}          & {\;\,\! 0}             
}
$
\label{fig_g12}
\end{figure*}
\noindent
\begin{center} $M(G_{12})/\{r_8,s_1,s_2,s_3\}$ contains an $M(K_{3,3})$-minor.\end{center}
{\tt \scriptsize {\bf Command:} ./macek -pGF2 '!contract 8;!contract -1;!contract -2;!contract -3;!minor' g12 '\{grK5,grK33\}'}\\
{\tt \scriptsize {\bf Output: }
The \#1 matroid [g12$\sim$c8$\sim$c-1$\sim$c-2$\sim$c-3] +HAS+ minor \#2 [grK33] in the list \{grK5 grK33\}.}

\subsubsection*{The matroid $M(G_{13})$:}
\FloatBarrier
\begin{figure*}[h]
\raisebox{-10ex}{\resizebox*{0.6\width}{!}{
\psfrag{r1}{\footnotesize $r_1$}
\psfrag{r2}{\footnotesize $r_2$}
\psfrag{r3}{\footnotesize $r_3$}
\psfrag{r4}{\footnotesize $r_4$}
\psfrag{r5}{\footnotesize $r_5$}
\psfrag{r6}{\footnotesize $r_6$}
\psfrag{r7}{\footnotesize $r_7$}
\psfrag{r8}{\footnotesize $r_8$}
\psfrag{s1}{\footnotesize $s_1$}
\psfrag{s2}{\footnotesize $s_2$}
\psfrag{s3}{\footnotesize $s_3$}
\psfrag{s4}{\footnotesize $s_4$}
\psfrag{s5}{\footnotesize $s_5$}
\psfrag{s6}{\footnotesize $s_6$}
\psfrag{s7}{\footnotesize $s_7$}
\psfrag{s8}{\footnotesize $s_8$}
\psfrag{s9}{\footnotesize $s_9$}
\includegraphics [scale=0.8] {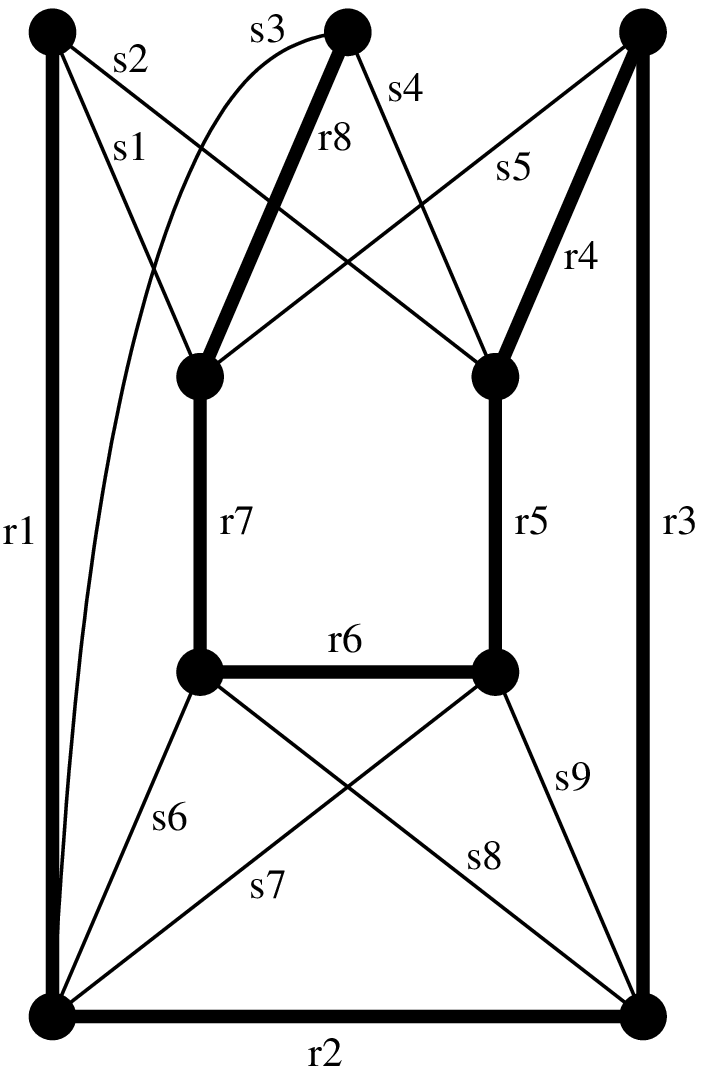}}}
$\mspace{30mu}$
$
g_{13}=
\kbordermatrix{\mbox{}&s_1&s_2&s_3&s_4&s_5&s_6&s_7&s_8&s_9\\
r_1 & {\;\,\! 1}  	   & {\;\,\! 1}          & {\;\,\! 0}          & {\;\,\! 0}          & {\;\,\! 0}          & {\;\,\! 0}          & {\;\,\! 0}           & {\;\,\! 0}            & {\;\,\! 0}\\
r_2 & {\;\,\! 1}           & {\;\,\! 1}          & {\;\,\! 1}          & {\;\,\! 0}          & {\;\,\! 0}          & {\;\,\! 1}          & {\;\,\! 1}           & {\;\,\! 0}            & {\;\,\! 0}\\ 
r_3 & {\;\,\! 1}           & {\;\,\! 1}          & {\;\,\! 1}          & {\;\,\! 0}          & {\;\,\! 0}          & {\;\,\! 1}          & {\;\,\! 1}           & {\;\,\! 1}            & {\;\,\! 1}\\ 
r_4 & {\;\,\! 1}           & {\;\,\! 1}          & {\;\,\! 1}          & {\;\,\! 0}          & {\;\,\! 1}          & {\;\,\! 1}          & {\;\,\! 1}           & {\;\,\! 1}            & {\;\,\! 1}\\
r_5 & {\;\,\! 1}           & {\;\,\! 0}          & {\;\,\! 1}          & {\;\,\! 1}          & {\;\,\! 1}          & {\;\,\! 1}          & {\;\,\! 1}           & {\;\,\! 1}            & {\;\,\! 1}\\ 
r_6 & {\;\,\! 1}           & {\;\,\! 0}          & {\;\,\! 1}          & {\;\,\! 1}          & {\;\,\! 1}          & {\;\,\! 1}          & {\;\,\! 0}           & {\;\,\! 1}            & {\;\,\! 0}\\          
r_7 & {\;\,\! 1}           & {\;\,\! 0}          & {\;\,\! 1}          & {\;\,\! 1}          & {\;\,\! 1}          & {\;\,\! 0}          & {\;\,\! 0}           & {\;\,\! 0}            & {\;\,\! 0}\\
r_8 & {\;\,\! 0}           & {\;\,\! 0}          & {\;\,\! 1}          & {\;\,\! 1}          & {\;\,\! 0}          & {\;\,\! 0}          & {\;\,\! 0}           & {\;\,\! 0}            & {\;\,\! 0}
}
$
\label{fig_g13}
\end{figure*}
\noindent
\begin{center} $M(G_{13})/\{r_2,r_6,s_6,s_9\}$ contains an $M(K_{3,3})$-minor.\end{center}
{\tt \scriptsize {\bf Command:} ./macek -pGF2 '!contract 2;!contract 6;!contract -6;!contract -9;!minor' g13 '\{grK5,grK33\}'}\\
{\tt \scriptsize {\bf Output: }
The \#1 matroid [g13$\sim$c2$\sim$c6$\sim$c-6$\sim$c-9] +HAS+ minor \#2 [grK33] in the list \{grK5 grK33\}.}

\subsubsection*{The matroid $M(G_{14})$:}
\FloatBarrier
\begin{figure*}[h]
\raisebox{-10ex}{\resizebox*{0.6\width}{!}{
\psfrag{r1}{\footnotesize $r_1$}
\psfrag{r2}{\footnotesize $r_2$}
\psfrag{r3}{\footnotesize $r_3$}
\psfrag{r4}{\footnotesize $r_4$}
\psfrag{r5}{\footnotesize $r_5$}
\psfrag{r6}{\footnotesize $r_6$}
\psfrag{r7}{\footnotesize $r_7$}
\psfrag{r8}{\footnotesize $r_8$}
\psfrag{r9}{\footnotesize $r_9$}
\psfrag{s1}{\footnotesize $s_1$}
\psfrag{s2}{\footnotesize $s_2$}
\psfrag{s3}{\footnotesize $s_3$}
\psfrag{s4}{\footnotesize $s_4$}
\psfrag{s5}{\footnotesize $s_5$}
\psfrag{s6}{\footnotesize $s_6$}
\psfrag{s7}{\footnotesize $s_7$}
\psfrag{s8}{\footnotesize $s_8$}
\includegraphics [scale=0.8] {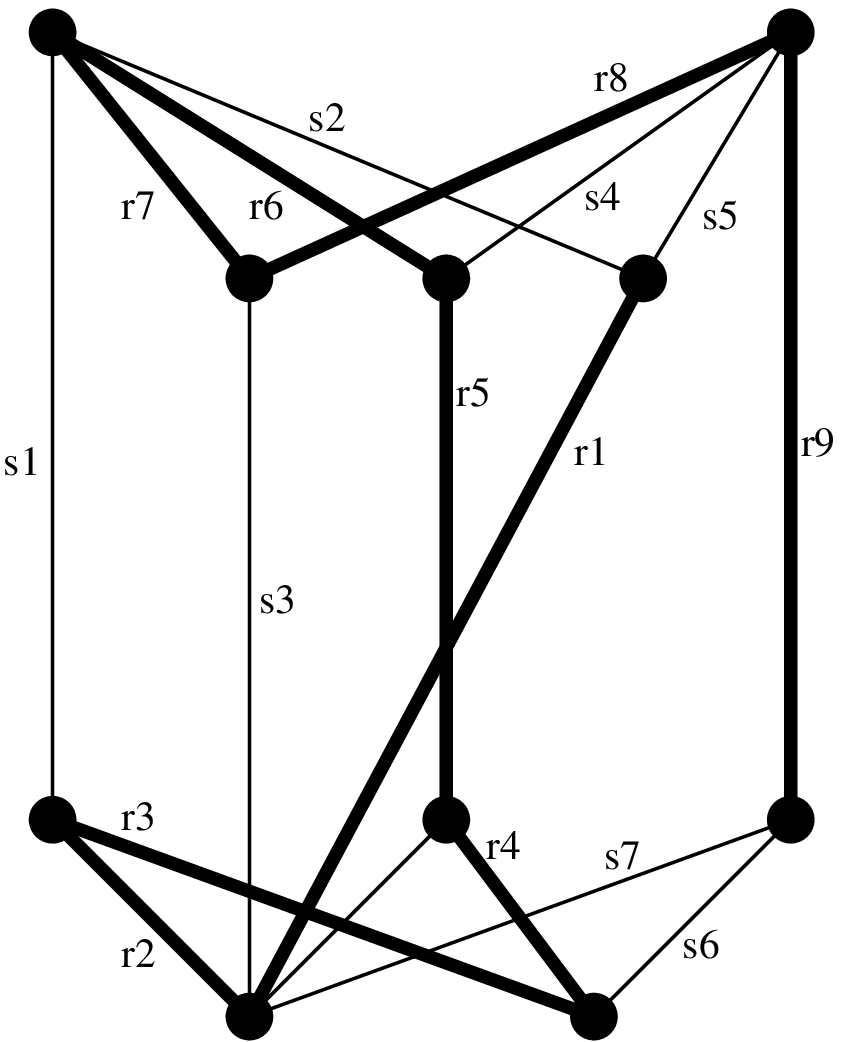}}}
$\mspace{30mu}$
$
g_{14}=
\kbordermatrix{\mbox{}&s_1&s_2&s_3&s_4&s_5&s_6&s_7&s_8\\
r_1 & {\;\,\! 0}  	   & {\;\,\! 1}          & {\;\,\! 0}          & {\;\,\! 0}          & {\;\,\! 1}          & {\;\,\! 0}          & {\;\,\! 0}            & {\;\,\! 0} \\
r_2 & {\;\,\! 0}           & {\;\,\! 1}          & {\;\,\! 1}          & {\;\,\! 0}          & {\;\,\! 1}          & {\;\,\! 0}          & {\;\,\! 1}            & {\;\,\! 1}\\ 
r_3 & {\;\,\! 1}           & {\;\,\! 1}          & {\;\,\! 1}          & {\;\,\! 0}          & {\;\,\! 1}          & {\;\,\! 0}          & {\;\,\! 1}            & {\;\,\! 1}\\ 
r_4 & {\;\,\! 1}           & {\;\,\! 1}          & {\;\,\! 1}          & {\;\,\! 0}          & {\;\,\! 1}          & {\;\,\! 1}          & {\;\,\! 1}            & {\;\,\! 1}\\
r_5 & {\;\,\! 1}           & {\;\,\! 1}          & {\;\,\! 1}          & {\;\,\! 1}          & {\;\,\! 1}          & {\;\,\! 1}          & {\;\,\! 1}            & {\;\,\! 0}\\ 
r_6 & {\;\,\! 1}           & {\;\,\! 1}          & {\;\,\! 1}          & {\;\,\! 1}          & {\;\,\! 1}          & {\;\,\! 1}          & {\;\,\! 1}            & {\;\,\! 0}\\          
r_7 & {\;\,\! 0}           & {\;\,\! 0}          & {\;\,\! 1}          & {\;\,\! 1}          & {\;\,\! 1}          & {\;\,\! 1}          & {\;\,\! 1}            & {\;\,\! 0}\\
r_8 & {\;\,\! 0}           & {\;\,\! 0}          & {\;\,\! 0}          & {\;\,\! 1}          & {\;\,\! 1}          & {\;\,\! 1}          & {\;\,\! 1}            & {\;\,\! 0}\\
r_9 & {\;\,\! 0}           & {\;\,\! 0}          & {\;\,\! 0}          & {\;\,\! 0}          & {\;\,\! 0}          & {\;\,\! 1}          & {\;\,\! 1}            & {\;\,\! 0}
}
$
\label{fig_g14}
\end{figure*}      
\noindent
\begin{center} $M(G_{14})/\{r_7,r_8,s_2,s_5\}$ contains an $M(K_{3,3})$-minor.\end{center}
{\tt \scriptsize {\bf Command:} ./macek -pGF2 '!contract 7;!contract 8;!contract -2;!contract -5;!minor' g14 '\{grK5,grK33\}'}\\
{\tt \scriptsize {\bf Output: }
The \#1 matroid [g14$\sim$c7$\sim$c8$\sim$c-2$\sim$c-5] +HAS+ minor \#2 [grK33] in the list \{grK5 grK33\}.}

\subsubsection*{The matroid $M(G_{15})$:}
\FloatBarrier
\begin{figure*}[h]
\raisebox{-10ex}{\resizebox*{0.6\width}{!}{
\psfrag{r1}{\footnotesize $r_1$}
\psfrag{r2}{\footnotesize $r_2$}
\psfrag{r3}{\footnotesize $r_3$}
\psfrag{r4}{\footnotesize $r_4$}
\psfrag{r5}{\footnotesize $r_5$}
\psfrag{r6}{\footnotesize $r_6$}
\psfrag{r7}{\footnotesize $r_7$}
\psfrag{r8}{\footnotesize $r_8$}
\psfrag{r9}{\footnotesize $r_9$}
\psfrag{s1}{\footnotesize $s_1$}
\psfrag{s2}{\footnotesize $s_2$}
\psfrag{s3}{\footnotesize $s_3$}
\psfrag{s4}{\footnotesize $s_4$}
\psfrag{s5}{\footnotesize $s_5$}
\psfrag{s6}{\footnotesize $s_6$}
\psfrag{s7}{\footnotesize $s_7$}
\psfrag{s8}{\footnotesize $s_8$}
\includegraphics [scale=0.8] {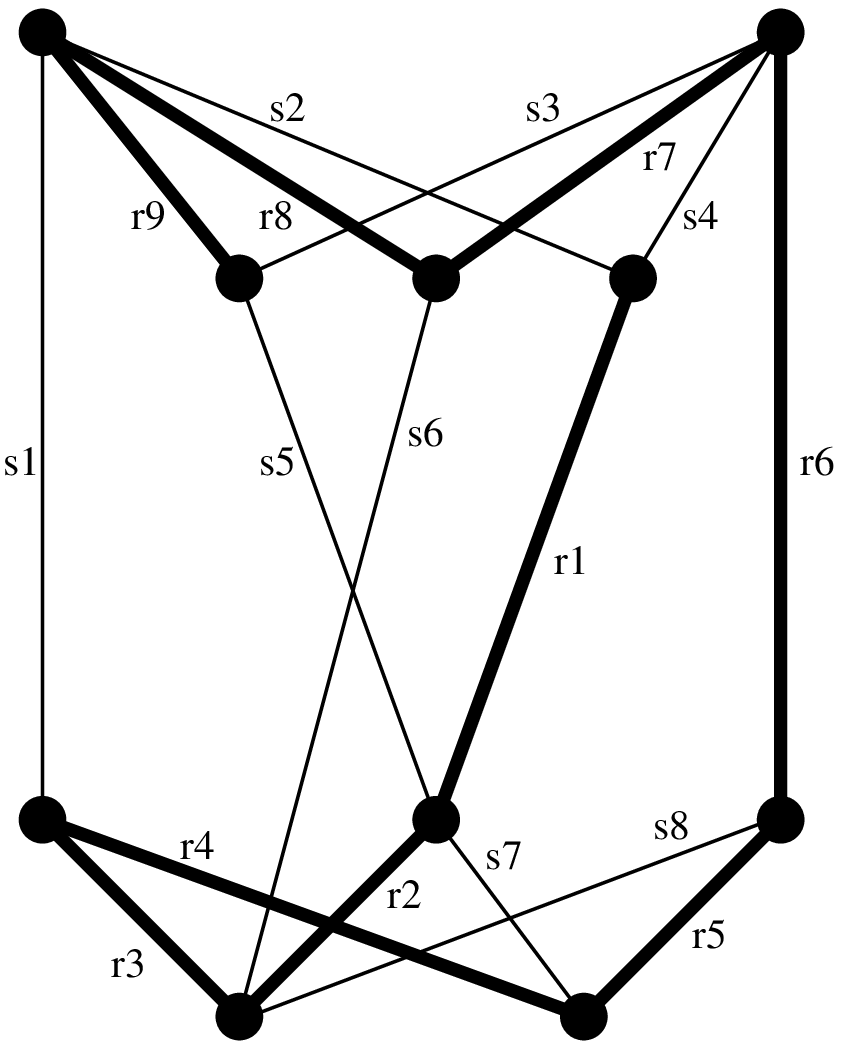}}}
$\mspace{30mu}$
$
g_{15}=
\kbordermatrix{\mbox{}&s_1&s_2&s_3&s_4&s_5&s_6&s_7&s_8\\
r_1 & {\;\,\! 0}  	   & {\;\,\! 1}          & {\;\,\! 0}          & {\;\,\! 1}          & {\;\,\! 0}          & {\;\,\! 0}          & {\;\,\! 0}            & {\;\,\! 0} \\
r_2 & {\;\,\! 0}           & {\;\,\! 1}          & {\;\,\! 0}          & {\;\,\! 1}          & {\;\,\! 1}          & {\;\,\! 0}          & {\;\,\! 1}            & {\;\,\! 0}\\ 
r_3 & {\;\,\! 0}           & {\;\,\! 1}          & {\;\,\! 0}          & {\;\,\! 1}          & {\;\,\! 1}          & {\;\,\! 1}          & {\;\,\! 1}            & {\;\,\! 1}\\ 
r_4 & {\;\,\! 1}           & {\;\,\! 1}          & {\;\,\! 0}          & {\;\,\! 1}          & {\;\,\! 1}          & {\;\,\! 1}          & {\;\,\! 1}            & {\;\,\! 1}\\
r_5 & {\;\,\! 1}           & {\;\,\! 1}          & {\;\,\! 0}          & {\;\,\! 1}          & {\;\,\! 1}          & {\;\,\! 1}          & {\;\,\! 0}            & {\;\,\! 1}\\ 
r_6 & {\;\,\! 1}           & {\;\,\! 1}          & {\;\,\! 0}          & {\;\,\! 1}          & {\;\,\! 1}          & {\;\,\! 1}          & {\;\,\! 0}            & {\;\,\! 0}\\          
r_7 & {\;\,\! 1}           & {\;\,\! 1}          & {\;\,\! 1}          & {\;\,\! 0}          & {\;\,\! 1}          & {\;\,\! 1}          & {\;\,\! 0}            & {\;\,\! 0}\\
r_8 & {\;\,\! 1}           & {\;\,\! 1}          & {\;\,\! 1}          & {\;\,\! 0}          & {\;\,\! 1}          & {\;\,\! 0}          & {\;\,\! 0}            & {\;\,\! 0}\\
r_9 & {\;\,\! 0}           & {\;\,\! 0}          & {\;\,\! 1}          & {\;\,\! 0}          & {\;\,\! 1}          & {\;\,\! 0}          & {\;\,\! 0}            & {\;\,\! 0}
}
$
\label{fig_g15}
\end{figure*}
\noindent
\begin{center} $M(G_{15})/\{r_7,r_8,r_9,s_3\}$ contains an $M(K_{3,3})$-minor.\end{center}
{\tt \scriptsize {\bf Command:} ./macek -pGF2 '!contract 7;!contract 8;!contract 9;!contract -3;!minor' g15 '\{grK5,grK33\}'}\\
{\tt \scriptsize {\bf Output: }
The \#1 matroid [g15$\sim$c7$\sim$c8$\sim$c9$\sim$c-3] +HAS+ minor \#2 [grK33] in the list \{grK5 grK33\}}.

\subsubsection*{The matroid $M(G_{16})$:}
\FloatBarrier
\begin{figure*}[h]
\raisebox{-10ex}{\resizebox*{0.6\width}{!}{
\psfrag{r1}{\footnotesize $r_1$}
\psfrag{r2}{\footnotesize $r_2$}
\psfrag{r3}{\footnotesize $r_3$}
\psfrag{r4}{\footnotesize $r_4$}
\psfrag{r5}{\footnotesize $r_5$}
\psfrag{r6}{\footnotesize $r_6$}
\psfrag{r7}{\footnotesize $r_7$}
\psfrag{r8}{\footnotesize $r_8$}
\psfrag{r9}{\footnotesize $r_9$}
\psfrag{s1}{\footnotesize $s_1$}
\psfrag{s2}{\footnotesize $s_2$}
\psfrag{s3}{\footnotesize $s_3$}
\psfrag{s4}{\footnotesize $s_4$}
\psfrag{s5}{\footnotesize $s_5$}
\psfrag{s6}{\footnotesize $s_6$}
\psfrag{s7}{\footnotesize $s_7$}
\psfrag{s8}{\footnotesize $s_8$}
\psfrag{s9}{\footnotesize $s_9$}
\includegraphics [scale=0.8] {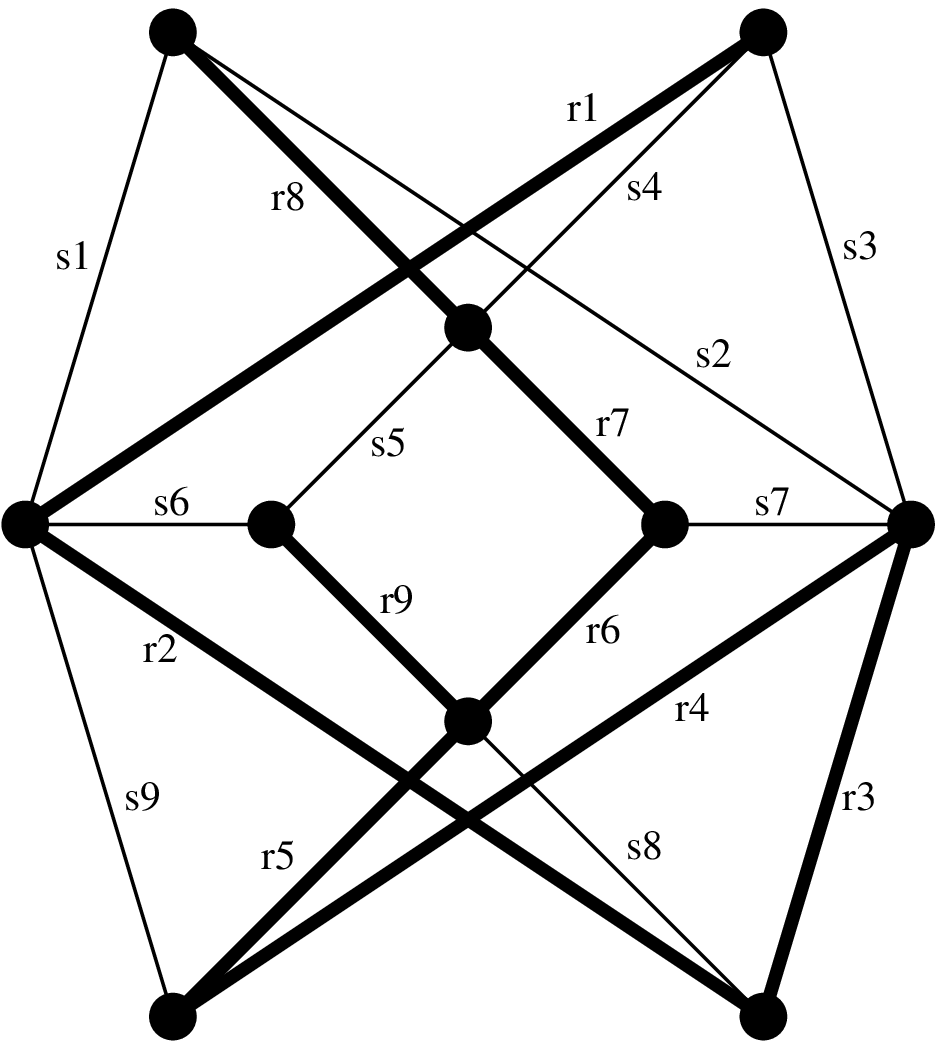}}}
$\mspace{30mu}$
$
g_{16}=
\kbordermatrix{\mbox{}&s_1&s_2&s_3&s_4&s_5&s_6&s_7&s_8&s_9\\
r_1 & {\;\,\! 0}  	   & {\;\,\! 0}          & {\;\,\! 1}          & {\;\,\! 1}          & {\;\,\! 0}          & {\;\,\! 0}          & {\;\,\! 0}            & {\;\,\! 0}        & {\;\,\! 0}\\
r_2 & {\;\,\! 1}           & {\;\,\! 0}          & {\;\,\! 1}          & {\;\,\! 1}          & {\;\,\! 0}          & {\;\,\! 1}          & {\;\,\! 0}            & {\;\,\! 0}        & {\;\,\! 1}\\ 
r_3 & {\;\,\! 1}           & {\;\,\! 0}          & {\;\,\! 1}          & {\;\,\! 1}          & {\;\,\! 0}          & {\;\,\! 1}          & {\;\,\! 0}            & {\;\,\! 1}        & {\;\,\! 1}\\ 
r_4 & {\;\,\! 1}           & {\;\,\! 1}          & {\;\,\! 0}          & {\;\,\! 1}          & {\;\,\! 0}          & {\;\,\! 1}          & {\;\,\! 1}            & {\;\,\! 1}        & {\;\,\! 1}\\
r_5 & {\;\,\! 1}           & {\;\,\! 1}          & {\;\,\! 0}          & {\;\,\! 1}          & {\;\,\! 0}          & {\;\,\! 1}          & {\;\,\! 1}            & {\;\,\! 1}        & {\;\,\! 0}\\ 
r_6 & {\;\,\! 1}           & {\;\,\! 1}          & {\;\,\! 0}          & {\;\,\! 1}          & {\;\,\! 1}          & {\;\,\! 0}          & {\;\,\! 1}            & {\;\,\! 0}        & {\;\,\! 0}\\          
r_7 & {\;\,\! 1}           & {\;\,\! 1}          & {\;\,\! 0}          & {\;\,\! 1}          & {\;\,\! 1}          & {\;\,\! 0}          & {\;\,\! 0}            & {\;\,\! 0}        & {\;\,\! 0}\\
r_8 & {\;\,\! 1}           & {\;\,\! 1}          & {\;\,\! 0}          & {\;\,\! 0}          & {\;\,\! 1}          & {\;\,\! 0}          & {\;\,\! 0}            & {\;\,\! 0}        & {\;\,\! 0}\\
r_9 & {\;\,\! 0}           & {\;\,\! 0}          & {\;\,\! 0}          & {\;\,\! 0}          & {\;\,\! 0}          & {\;\,\! 1}          & {\;\,\! 0}            & {\;\,\! 0}        & {\;\,\! 0}
}
$
\label{fig_g16}
\end{figure*}     
\noindent
\begin{center} $M(G_{16})/\{r_6,r_7,r_9,s_5\}$ contains an $M(K_{3,3})$-minor.\end{center}
{\tt \scriptsize {\bf Command:} ./macek -pGF2 '!contract 6;!contract 7;!contract 9;!contract -5;!minor' g16 '\{grK5,grK33\}'}\\
{\tt \scriptsize {\bf Output: }
The \#1 matroid [g16$\sim$c6$\sim$c7$\sim$c9$\sim$c-5] +HAS+ minor \#2 [grK33] in the list \{grK5 grK33\}.}

\subsubsection*{The matroid $M(G_{18})$:}
\FloatBarrier
\begin{figure*}[h]
\raisebox{-10ex}{\resizebox*{0.6\width}{!}{
\psfrag{r1}{\footnotesize $r_1$}
\psfrag{r2}{\footnotesize $r_2$}
\psfrag{r3}{\footnotesize $r_3$}
\psfrag{r4}{\footnotesize $r_4$}
\psfrag{r5}{\footnotesize $r_5$}
\psfrag{r6}{\footnotesize $r_6$}
\psfrag{r7}{\footnotesize $r_7$}
\psfrag{r8}{\footnotesize $r_8$}
\psfrag{r9}{\footnotesize $r_9$}
\psfrag{s1}{\footnotesize $s_1$}
\psfrag{s2}{\footnotesize $s_2$}
\psfrag{s3}{\footnotesize $s_3$}
\psfrag{s4}{\footnotesize $s_4$}
\psfrag{s5}{\footnotesize $s_5$}
\psfrag{s6}{\footnotesize $s_6$}
\includegraphics [scale=0.8] {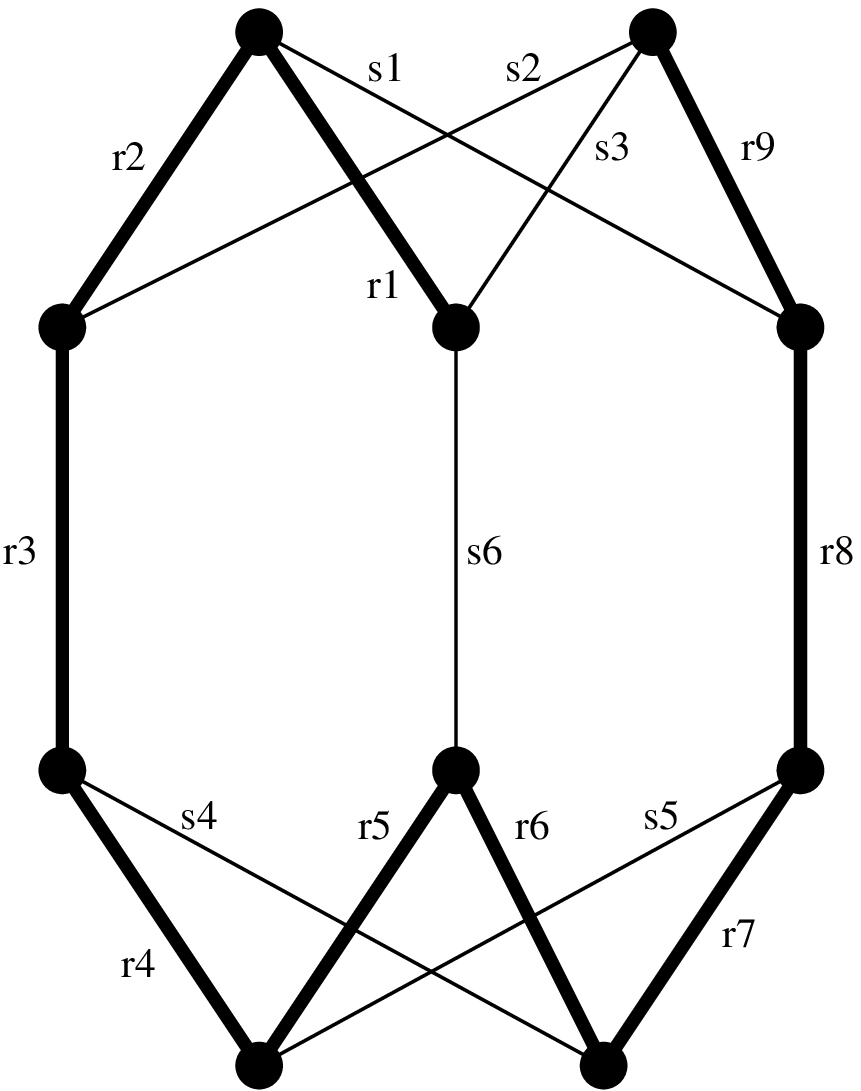}}}
$\mspace{30mu}$
$
g_{18}=
\kbordermatrix{\mbox{}&s_1&s_2&s_3&s_4&s_5&s_6\\
r_1 & {\;\,\! 0}  	   & {\;\,\! 0}          & {\;\,\! 1}          & {\;\,\! 0}          & {\;\,\! 0}          & {\;\,\! 1}         \\
r_2 & {\;\,\! 1}           & {\;\,\! 0}          & {\;\,\! 1}          & {\;\,\! 0}          & {\;\,\! 0}          & {\;\,\! 1}         \\ 
r_3 & {\;\,\! 1}           & {\;\,\! 1}          & {\;\,\! 1}          & {\;\,\! 0}          & {\;\,\! 0}          & {\;\,\! 1}         \\ 
r_4 & {\;\,\! 1}           & {\;\,\! 1}          & {\;\,\! 1}          & {\;\,\! 1}          & {\;\,\! 0}          & {\;\,\! 1}         \\
r_5 & {\;\,\! 1}           & {\;\,\! 1}          & {\;\,\! 1}          & {\;\,\! 1}          & {\;\,\! 1}          & {\;\,\! 1}         \\ 
r_6 & {\;\,\! 1}           & {\;\,\! 1}          & {\;\,\! 1}          & {\;\,\! 1}          & {\;\,\! 1}          & {\;\,\! 0}         \\          
r_7 & {\;\,\! 1}           & {\;\,\! 1}          & {\;\,\! 1}          & {\;\,\! 0}          & {\;\,\! 1}          & {\;\,\! 0}          \\
r_8 & {\;\,\! 1}           & {\;\,\! 1}          & {\;\,\! 1}          & {\;\,\! 0}          & {\;\,\! 0}          & {\;\,\! 0}          \\
r_9 & {\;\,\! 0}           & {\;\,\! 1}          & {\;\,\! 1}          & {\;\,\! 0}          & {\;\,\! 0}          & {\;\,\! 0}          
}
$
\label{fig_g19}
\end{figure*}    
\noindent
\begin{center} $M(G_{18})/\{r_4,r_7,s_4,s_5\}$ contains an $M(K_{3,3})$-minor.\end{center}
{\tt \scriptsize {\bf Command:} ./macek -pGF2 '!contract 4;!contract 7;!contract -4;!contract -5;!minor' g18 '\{grK5,grK33\}'}\\
{\tt \scriptsize {\bf Output: }
The \#1 matroid [g18$\sim$c4$\sim$c7$\sim$c-4$\sim$c-5] +HAS+ minor \#2 [grK33] in the list \{grK5 grK33\}.}

\subsubsection*{The matroid $M(G_{20})$:}
\FloatBarrier
\begin{figure*}[h]
\raisebox{-10ex}{\resizebox*{0.6\width}{!}{
\psfrag{r1}{\footnotesize $r_1$}
\psfrag{r2}{\footnotesize $r_2$}
\psfrag{r3}{\footnotesize $r_3$}
\psfrag{r4}{\footnotesize $r_4$}
\psfrag{r5}{\footnotesize $r_5$}
\psfrag{r6}{\footnotesize $r_6$}
\psfrag{r7}{\footnotesize $r_7$}
\psfrag{r8}{\footnotesize $r_8$}
\psfrag{s1}{\footnotesize $s_1$}
\psfrag{s2}{\footnotesize $s_2$}
\psfrag{s3}{\footnotesize $s_3$}
\psfrag{s4}{\footnotesize $s_4$}
\psfrag{s5}{\footnotesize $s_5$}
\psfrag{s6}{\footnotesize $s_6$}
\psfrag{s7}{\footnotesize $s_7$}
\psfrag{s8}{\footnotesize $s_8$}
\includegraphics [scale=0.8] {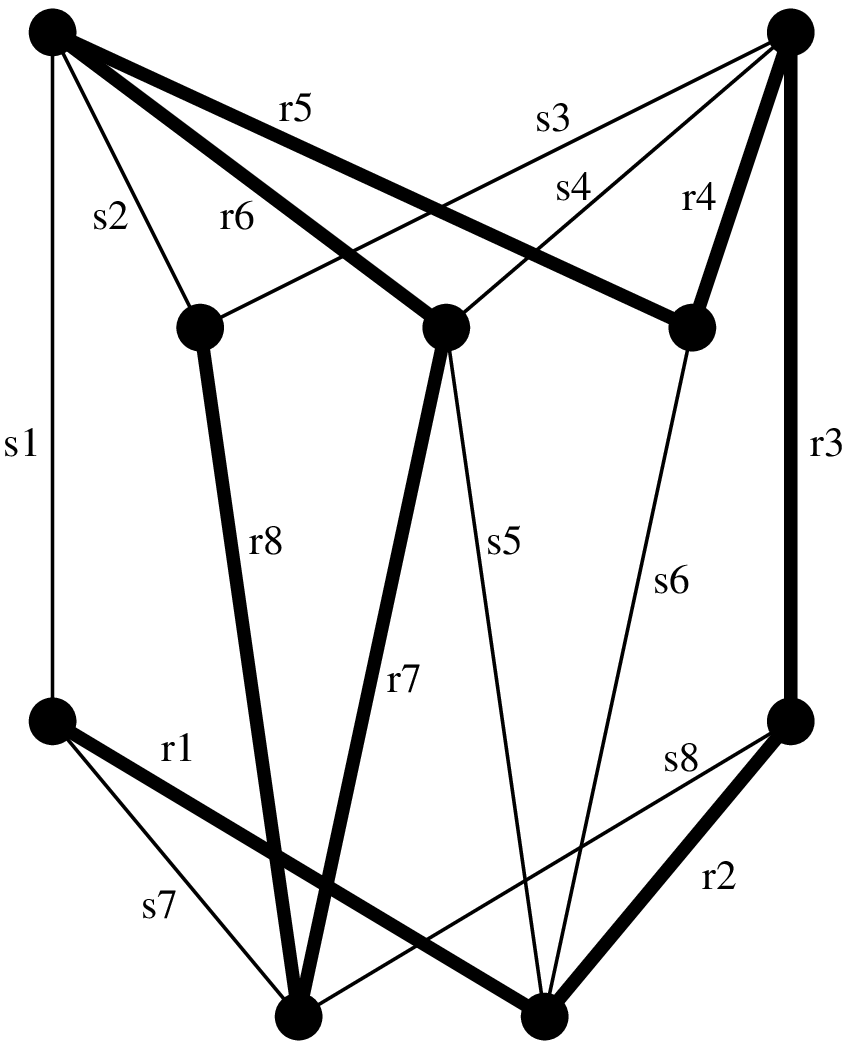}}}
$\mspace{30mu}$
$
g_{20}=
\kbordermatrix{\mbox{}&s_1&s_2&s_3&s_4&s_5&s_6&s_7&s_8\\
r_1 & {\;\,\! 1}  	   & {\;\,\! 0}          & {\;\,\! 0}          & {\;\,\! 0}          & {\;\,\! 0}          & {\;\,\! 0}         & {\;\,\! 1}        & {\;\,\! 0}\\
r_2 & {\;\,\! 1}           & {\;\,\! 0}          & {\;\,\! 0}          & {\;\,\! 0}          & {\;\,\! 1}          & {\;\,\! 1}         & {\;\,\! 1}        & {\;\,\! 0}\\ 
r_3 & {\;\,\! 1}           & {\;\,\! 0}          & {\;\,\! 0}          & {\;\,\! 0}          & {\;\,\! 1}          & {\;\,\! 1}         & {\;\,\! 1}        & {\;\,\! 1}\\ 
r_4 & {\;\,\! 1}           & {\;\,\! 0}          & {\;\,\! 1}          & {\;\,\! 1}          & {\;\,\! 1}          & {\;\,\! 1}         & {\;\,\! 1}        & {\;\,\! 1}\\
r_5 & {\;\,\! 1}           & {\;\,\! 0}          & {\;\,\! 1}          & {\;\,\! 1}          & {\;\,\! 1}          & {\;\,\! 0}         & {\;\,\! 1}        & {\;\,\! 1}\\ 
r_6 & {\;\,\! 0}           & {\;\,\! 1}          & {\;\,\! 1}          & {\;\,\! 1}          & {\;\,\! 1}          & {\;\,\! 0}         & {\;\,\! 1}        & {\;\,\! 1}\\          
r_7 & {\;\,\! 0}           & {\;\,\! 1}          & {\;\,\! 1}          & {\;\,\! 0}          & {\;\,\! 0}          & {\;\,\! 0}         & {\;\,\! 1}        & {\;\,\! 1} \\
r_8 & {\;\,\! 0}           & {\;\,\! 1}          & {\;\,\! 1}          & {\;\,\! 0}          & {\;\,\! 0}          & {\;\,\! 0}         & {\;\,\! 0}        & {\;\,\! 0}
}
$
\label{fig_g20}
\end{figure*}    
\noindent
\begin{center} $M(G_{20})/\{r_1,r_2,s_7,s_8\}$ contains an $M(K_{3,3})$-minor.\end{center}
{\tt \scriptsize {\bf Command:} ./macek -pGF2 '!contract 1;!contract 2;!contract -7;!contract -8;!minor' g20 '\{grK5,grK33\}'}\\
{\tt \scriptsize {\bf Output: }
The \#1 matroid [g20$\sim$c1$\sim$c2$\sim$c-7$\sim$c-8] +HAS+ minor \#2 [grK33] in the list \{grK5 grK33\}.}

\subsubsection*{The matroid $M(G_{21})$:}
\FloatBarrier
\begin{figure*}[h]
\raisebox{-10ex}{\resizebox*{0.6\width}{!}{
\psfrag{r1}{\footnotesize $r_1$}
\psfrag{r2}{\footnotesize $r_2$}
\psfrag{r3}{\footnotesize $r_3$}
\psfrag{r4}{\footnotesize $r_4$}
\psfrag{r5}{\footnotesize $r_5$}
\psfrag{r6}{\footnotesize $r_6$}
\psfrag{r7}{\footnotesize $r_7$}
\psfrag{s1}{\footnotesize $s_1$}
\psfrag{s2}{\footnotesize $s_2$}
\psfrag{s3}{\footnotesize $s_3$}
\psfrag{s4}{\footnotesize $s_4$}
\psfrag{s5}{\footnotesize $s_5$}
\psfrag{s6}{\footnotesize $s_6$}
\psfrag{s7}{\footnotesize $s_7$}
\psfrag{s8}{\footnotesize $s_8$}
\psfrag{s9}{\footnotesize $s_9$}
\includegraphics [scale=0.8] {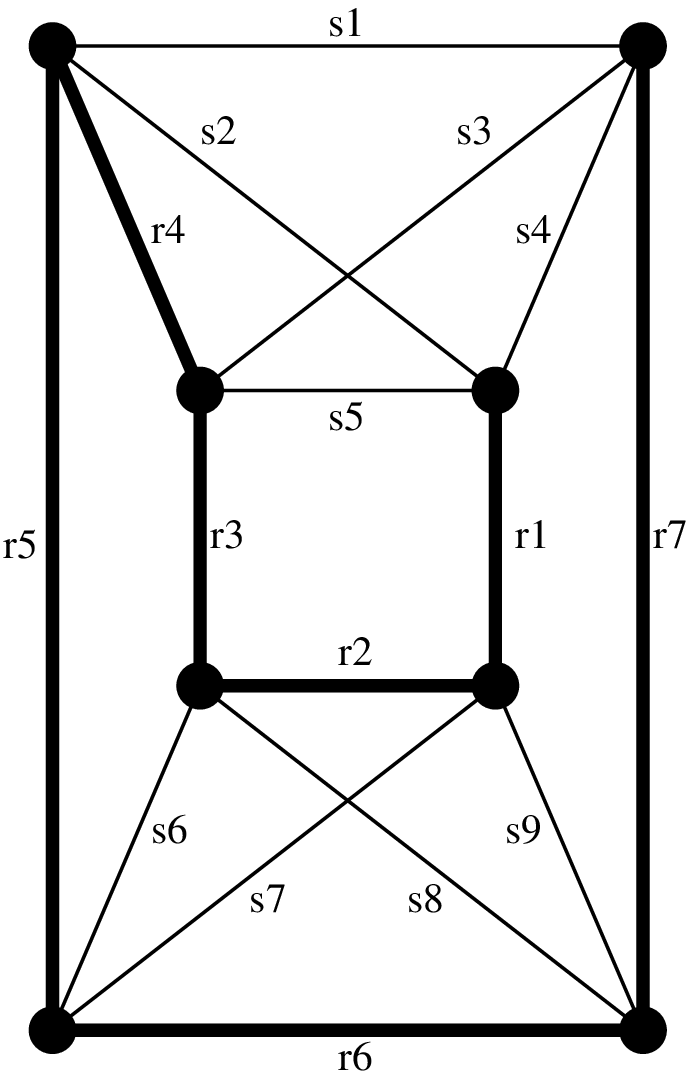}}}
$\mspace{30mu}$
$
g_{21}=
\kbordermatrix{\mbox{}&s_1&s_2&s_3&s_4&s_5&s_6&s_7&s_8&s_9\\
r_1 & {\;\,\! 0}  	   & {\;\,\! 1}          & {\;\,\! 0}          & {\;\,\! 1}          & {\;\,\! 1}          & {\;\,\! 0}         & {\;\,\! 0}        & {\;\,\! 0}       & {\;\,\! 0}\\
r_2 & {\;\,\! 0}           & {\;\,\! 1}          & {\;\,\! 0}          & {\;\,\! 1}          & {\;\,\! 1}          & {\;\,\! 0}         & {\;\,\! 1}        & {\;\,\! 0}       & {\;\,\! 1}\\ 
r_3 & {\;\,\! 0}           & {\;\,\! 1}          & {\;\,\! 0}          & {\;\,\! 1}          & {\;\,\! 1}          & {\;\,\! 1}         & {\;\,\! 1}        & {\;\,\! 1}       & {\;\,\! 1}\\ 
r_4 & {\;\,\! 0}           & {\;\,\! 1}          & {\;\,\! 1}          & {\;\,\! 1}          & {\;\,\! 0}          & {\;\,\! 1}         & {\;\,\! 1}        & {\;\,\! 1}       & {\;\,\! 1}\\
r_5 & {\;\,\! 1}           & {\;\,\! 0}          & {\;\,\! 1}          & {\;\,\! 1}          & {\;\,\! 0}          & {\;\,\! 1}         & {\;\,\! 1}        & {\;\,\! 1}       & {\;\,\! 1}\\ 
r_6 & {\;\,\! 1}           & {\;\,\! 0}          & {\;\,\! 1}          & {\;\,\! 1}          & {\;\,\! 0}          & {\;\,\! 0}         & {\;\,\! 0}        & {\;\,\! 1}       & {\;\,\! 1}\\          
r_7 & {\;\,\! 1}           & {\;\,\! 0}          & {\;\,\! 1}          & {\;\,\! 1}          & {\;\,\! 0}          & {\;\,\! 0}         & {\;\,\! 0}        & {\;\,\! 0}       & {\;\,\! 0}
}
$
\label{fig_g21}
\end{figure*}
\noindent
\begin{center} $M(G_{21})/\{r_4,s_2,s_5\}$ contains an $M(K_{5})$-minor.\end{center}
{\tt \scriptsize {\bf Command:} ./macek -pGF2 '!contract 4;!contract -2;!contract -5;!minor' g21 '\{grK5,grK33\}'}\\
{\tt \scriptsize {\bf Output: }
The \#1 matroid [g21$\sim$c4$\sim$c-2$\sim$c-5] +HAS+ minor \#1 [grK5] in the list \{grK5 grK33\}.}

\subsubsection*{The matroid $M(G_{22})$:}
\FloatBarrier
\begin{figure*}[h]
\raisebox{-10ex}{\resizebox*{0.6\width}{!}{
\psfrag{r1}{\footnotesize $r_1$}
\psfrag{r2}{\footnotesize $r_2$}
\psfrag{r3}{\footnotesize $r_3$}
\psfrag{r4}{\footnotesize $r_4$}
\psfrag{r5}{\footnotesize $r_5$}
\psfrag{r6}{\footnotesize $r_6$}
\psfrag{r7}{\footnotesize $r_7$}
\psfrag{r8}{\footnotesize $r_8$}
\psfrag{s1}{\footnotesize $s_1$}
\psfrag{s2}{\footnotesize $s_2$}
\psfrag{s3}{\footnotesize $s_3$}
\psfrag{s4}{\footnotesize $s_4$}
\psfrag{s5}{\footnotesize $s_5$}
\psfrag{s6}{\footnotesize $s_6$}
\psfrag{s7}{\footnotesize $s_7$}
\psfrag{s8}{\footnotesize $s_8$}
\includegraphics [scale=0.8] {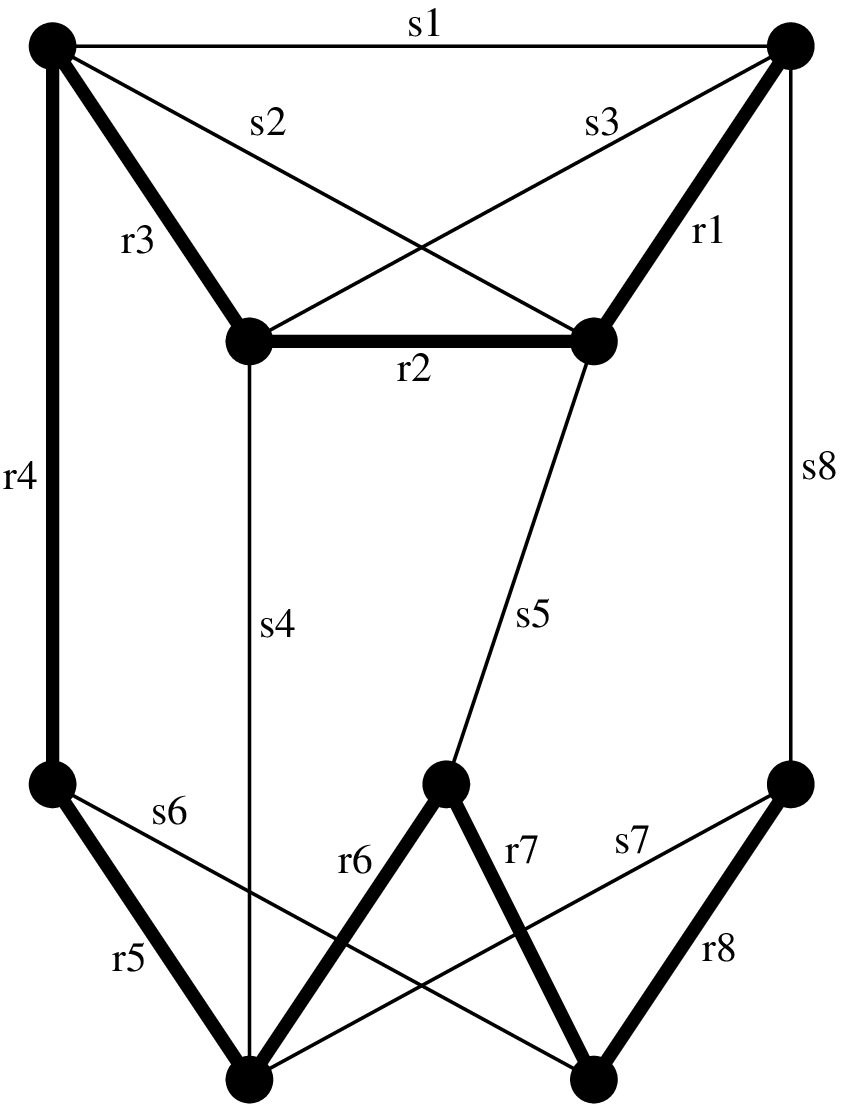}}}
$\mspace{30mu}$
$
g_{22}=
\kbordermatrix{\mbox{}&s_1&s_2&s_3&s_4&s_5&s_6&s_7&s_8\\
r_1 & {\;\,\! 1}  	   & {\;\,\! 0}          & {\;\,\! 1}          & {\;\,\! 0}          & {\;\,\! 0}          & {\;\,\! 0}         & {\;\,\! 0}        & {\;\,\! 1}      \\
r_2 & {\;\,\! 1}           & {\;\,\! 1}          & {\;\,\! 1}          & {\;\,\! 0}          & {\;\,\! 1}          & {\;\,\! 0}         & {\;\,\! 0}        & {\;\,\! 1}      \\ 
r_3 & {\;\,\! 1}           & {\;\,\! 1}          & {\;\,\! 0}          & {\;\,\! 1}          & {\;\,\! 1}          & {\;\,\! 0}         & {\;\,\! 0}        & {\;\,\! 1}      \\ 
r_4 & {\;\,\! 0}           & {\;\,\! 0}          & {\;\,\! 0}          & {\;\,\! 1}          & {\;\,\! 1}          & {\;\,\! 0}         & {\;\,\! 0}        & {\;\,\! 1}      \\
r_5 & {\;\,\! 0}           & {\;\,\! 0}          & {\;\,\! 0}          & {\;\,\! 1}          & {\;\,\! 1}          & {\;\,\! 1}         & {\;\,\! 0}        & {\;\,\! 1}      \\ 
r_6 & {\;\,\! 0}           & {\;\,\! 0}          & {\;\,\! 0}          & {\;\,\! 0}          & {\;\,\! 1}          & {\;\,\! 1}         & {\;\,\! 1}        & {\;\,\! 1}      \\          
r_7 & {\;\,\! 0}           & {\;\,\! 0}          & {\;\,\! 0}          & {\;\,\! 0}          & {\;\,\! 0}          & {\;\,\! 1}         & {\;\,\! 1}        & {\;\,\! 1}      \\
r_8 & {\;\,\! 0}           & {\;\,\! 0}          & {\;\,\! 0}          & {\;\,\! 0}          & {\;\,\! 0}          & {\;\,\! 0}         & {\;\,\! 1}        & {\;\,\! 1}
}
$
\label{fig_g22}
\end{figure*}     
\noindent
\begin{center} $M(G_{22})/\{r_5,r_6,r_7,s_6\}$ contains an $M(K_{5})$-minor.\end{center}
{\tt \scriptsize {\bf Command:} ./macek -pGF2 '!contract 5;!contract 6;!contract 7;!contract -6;!minor' g22 '\{grK5,grK33\}'}\\
{\tt \scriptsize {\bf Output: }
 The \#1 matroid [g22$\sim$c5$\sim$c6$\sim$c7$\sim$c-6] +HAS+ minor \#1 [grK5] in the list \{grK5 grK33\}.}

\subsubsection*{The matroid $M(G_{23})$:}
\FloatBarrier
\begin{figure*}[h]
\raisebox{-10ex}{\resizebox*{0.6\width}{!}{
\psfrag{r1}{\footnotesize $r_1$}
\psfrag{r2}{\footnotesize $r_2$}
\psfrag{r3}{\footnotesize $r_3$}
\psfrag{r4}{\footnotesize $r_4$}
\psfrag{r5}{\footnotesize $r_5$}
\psfrag{r6}{\footnotesize $r_6$}
\psfrag{r7}{\footnotesize $r_7$}
\psfrag{r8}{\footnotesize $r_8$}
\psfrag{r9}{\footnotesize $r_9$}
\psfrag{s1}{\footnotesize $s_1$}
\psfrag{s2}{\footnotesize $s_2$}
\psfrag{s3}{\footnotesize $s_3$}
\psfrag{s4}{\footnotesize $s_4$}
\psfrag{s5}{\footnotesize $s_5$}
\psfrag{s6}{\footnotesize $s_6$}
\psfrag{s7}{\footnotesize $s_7$}
\includegraphics [scale=0.8] {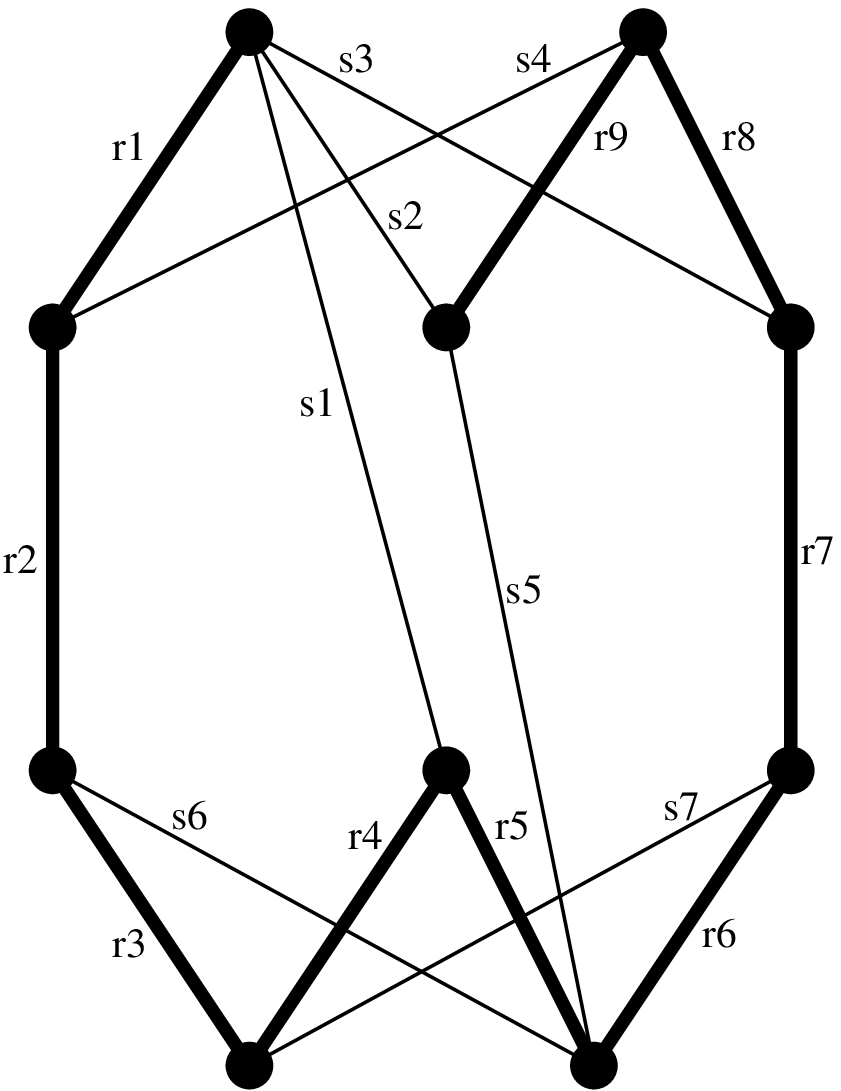}}}
$\mspace{30mu}$
$
g_{23}=
\kbordermatrix{\mbox{}&s_1&s_2&s_3&s_4&s_5&s_6&s_7\\
r_1 & {\;\,\! 1}  	   & {\;\,\! 1}          & {\;\,\! 1}          & {\;\,\! 0}          & {\;\,\! 0}          & {\;\,\! 0}         & {\;\,\! 0}            \\
r_2 & {\;\,\! 1}           & {\;\,\! 1}          & {\;\,\! 1}          & {\;\,\! 1}          & {\;\,\! 0}          & {\;\,\! 0}         & {\;\,\! 0}            \\ 
r_3 & {\;\,\! 1}           & {\;\,\! 1}          & {\;\,\! 1}          & {\;\,\! 1}          & {\;\,\! 0}          & {\;\,\! 1}         & {\;\,\! 0}            \\ 
r_4 & {\;\,\! 1}           & {\;\,\! 1}          & {\;\,\! 1}          & {\;\,\! 1}          & {\;\,\! 0}          & {\;\,\! 1}         & {\;\,\! 1}            \\
r_5 & {\;\,\! 0}           & {\;\,\! 1}          & {\;\,\! 1}          & {\;\,\! 1}          & {\;\,\! 0}          & {\;\,\! 1}         & {\;\,\! 1}             \\ 
r_6 & {\;\,\! 0}           & {\;\,\! 1}          & {\;\,\! 1}          & {\;\,\! 1}          & {\;\,\! 1}          & {\;\,\! 0}         & {\;\,\! 1}            \\          
r_7 & {\;\,\! 0}           & {\;\,\! 1}          & {\;\,\! 1}          & {\;\,\! 1}          & {\;\,\! 1}          & {\;\,\! 0}         & {\;\,\! 0}             \\
r_8 & {\;\,\! 0}           & {\;\,\! 1}          & {\;\,\! 0}          & {\;\,\! 1}          & {\;\,\! 1}          & {\;\,\! 0}         & {\;\,\! 0}            \\
r_9 & {\;\,\! 0}           & {\;\,\! 1}          & {\;\,\! 0}          & {\;\,\! 0}          & {\;\,\! 1}          & {\;\,\! 0}         & {\;\,\! 0}       
}
$
\label{fig_g23}
\end{figure*}      
\noindent
\begin{center} $M(G_{23})/\{r_1,r_8,s_3,s_4\}$ contains an $M(K_{3,3})$-minor.\end{center}
{\tt \scriptsize {\bf Command:} ./macek -pGF2 '!contract 1;!contract 8;!contract -3;!contract -4;!minor' g23 '\{grK5,grK33\}'}\\
{\tt \scriptsize {\bf Output: } The \#1 matroid [g23$\sim$c1$\sim$c8$\sim$c-3$\sim$c-4] +HAS+ minor \#2 [grK33] in the list \{grK5 grK33\}.} \\

\subsubsection*{The matroid $M(G_{24})$:}
\FloatBarrier
\begin{figure*}[h]
\raisebox{-10ex}{\resizebox*{0.6\width}{!}{
\psfrag{r1}{\footnotesize $r_1$}
\psfrag{r2}{\footnotesize $r_2$}
\psfrag{r3}{\footnotesize $r_3$}
\psfrag{r4}{\footnotesize $r_4$}
\psfrag{r5}{\footnotesize $r_5$}
\psfrag{r6}{\footnotesize $r_6$}
\psfrag{s1}{\footnotesize $s_1$}
\psfrag{s2}{\footnotesize $s_2$}
\psfrag{s3}{\footnotesize $s_3$}
\psfrag{s4}{\footnotesize $s_4$}
\psfrag{s5}{\footnotesize $s_5$}
\psfrag{s6}{\footnotesize $s_6$}
\psfrag{s7}{\footnotesize $s_7$}
\psfrag{s8}{\footnotesize $s_8$}
\psfrag{s9}{\footnotesize $s_9$}
\psfrag{s10}{\footnotesize $s_{10}$}
\psfrag{s11}{\footnotesize $s_{11}$}
\psfrag{s12}{\footnotesize $s_{12}$}
\includegraphics [scale=0.8] {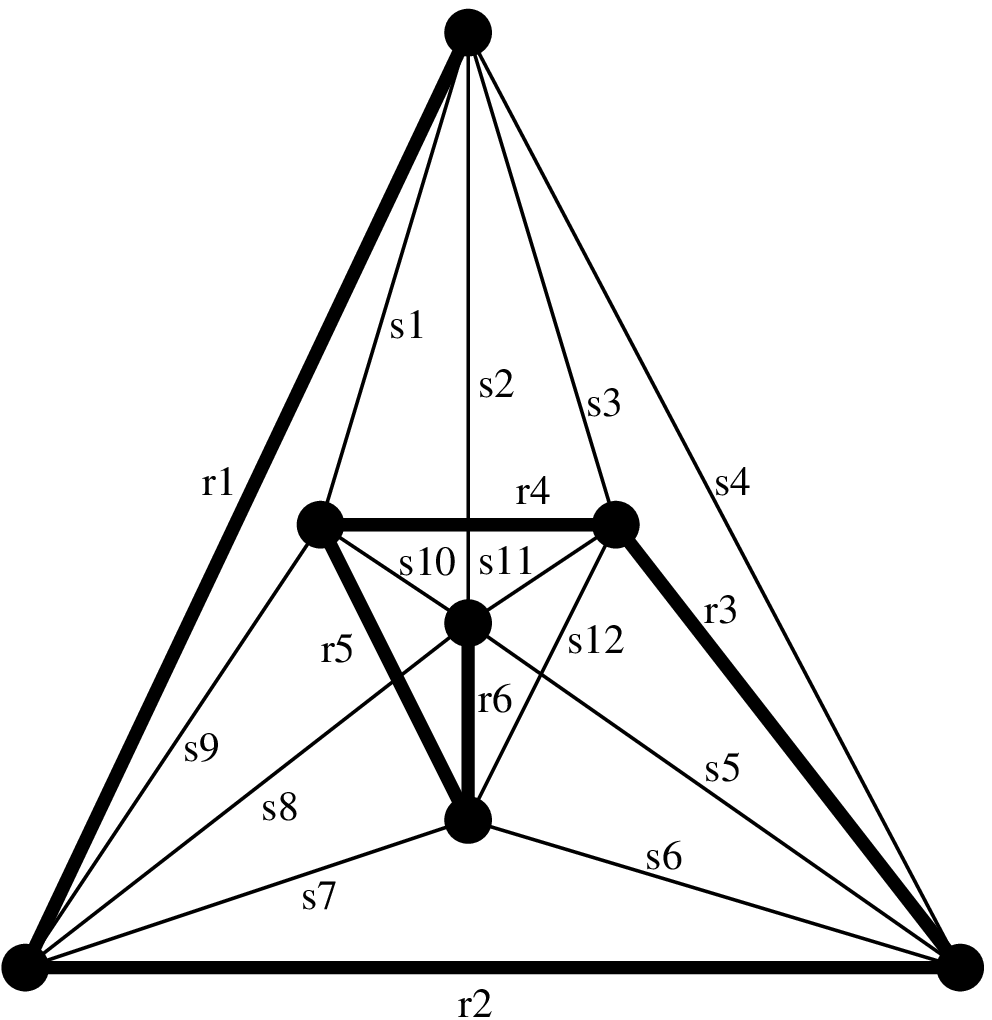}}}
$\mspace{30mu}$
$
g_{24}=
\kbordermatrix{\mbox{}&s_1&s_2&s_3&s_4&s_5&s_6&s_7&s8&s9&s_{10}&s_{11}&s_{12}\\
r_1 & {\;\,\! 1}  	   & {\;\,\! 1}          & {\;\,\! 1}          & {\;\,\! 1}          & {\;\,\! 0}          & {\;\,\! 0}          & {\;\,\! 0}           & {\;\,\! 0}       & {\;\,\! 0}      & {\;\,\! 0}       & {\;\,\! 0}    & {\;\,\! 0}\\
r_2 & {\;\,\! 1}           & {\;\,\! 1}          & {\;\,\! 1}          & {\;\,\! 1}          & {\;\,\! 0}          & {\;\,\! 0}          & {\;\,\! 1}           & {\;\,\! 1}       & {\;\,\! 1}      & {\;\,\! 0}       & {\;\,\! 0}    & {\;\,\! 0}\\ 
r_3 & {\;\,\! 1}           & {\;\,\! 1}          & {\;\,\! 1}          & {\;\,\! 0}          & {\;\,\! 1}          & {\;\,\! 1}          & {\;\,\! 1}           & {\;\,\! 1}       & {\;\,\! 1}      & {\;\,\! 0}       & {\;\,\! 0}    & {\;\,\! 0}\\ 
r_4 & {\;\,\! 1}           & {\;\,\! 1}          & {\;\,\! 0}          & {\;\,\! 0}          & {\;\,\! 1}          & {\;\,\! 1}          & {\;\,\! 1}           & {\;\,\! 1}       & {\;\,\! 1}      & {\;\,\! 0}       & {\;\,\! 1}    & {\;\,\! 1}\\
r_5 & {\;\,\! 0}           & {\;\,\! 1}          & {\;\,\! 0}          & {\;\,\! 0}          & {\;\,\! 1}          & {\;\,\! 1}          & {\;\,\! 1}           & {\;\,\! 1}       & {\;\,\! 0}      & {\;\,\! 1}       & {\;\,\! 1}    & {\;\,\! 1}\\ 
r_6 & {\;\,\! 0}           & {\;\,\! 1}          & {\;\,\! 0}          & {\;\,\! 0}          & {\;\,\! 1}          & {\;\,\! 0}          & {\;\,\! 0}           & {\;\,\! 1}       & {\;\,\! 0}      & {\;\,\! 1}       & {\;\,\! 1}    & {\;\,\! 0}      
}   
$
\label{fig_g24}
\end{figure*}
\noindent
\begin{center} $M(G_{24})/\{r_1,s_1,s_9\}$ contains an $M(K_5)$-minor.\end{center}
{\tt \scriptsize {\bf Command:} ./macek -pGF2 '!contract 1;!contract -1;!contract -9;!minor' g24 '\{grK5,grK33\}'}\\
{\tt \scriptsize {\bf Output: }The \#1 matroid [g24$\sim$c1$\sim$c-1$\sim$c-9] +HAS+ minor \#1 [grK5] in the list \{grK5 grK33\}.}

\subsubsection*{The matroid $M(G_{25})$:}
\FloatBarrier
\begin{figure*}[h]
\raisebox{-10ex}{\resizebox*{0.6\width}{!}{
\psfrag{r1}{\footnotesize $r_1$}
\psfrag{r2}{\footnotesize $r_2$}
\psfrag{r3}{\footnotesize $r_3$}
\psfrag{r4}{\footnotesize $r_4$}
\psfrag{r5}{\footnotesize $r_5$}
\psfrag{r6}{\footnotesize $r_6$}
\psfrag{r7}{\footnotesize $r_7$}
\psfrag{s1}{\footnotesize $s_1$}
\psfrag{s2}{\footnotesize $s_2$}
\psfrag{s3}{\footnotesize $s_3$}
\psfrag{s4}{\footnotesize $s_4$}
\psfrag{s5}{\footnotesize $s_5$}
\psfrag{s6}{\footnotesize $s_6$}
\psfrag{s7}{\footnotesize $s_7$}
\psfrag{s8}{\footnotesize $s_8$}
\psfrag{s9}{\footnotesize $s_9$}
\psfrag{s10}{\footnotesize $s_{10}$}
\psfrag{s11}{\footnotesize $s_{11}$}
\psfrag{s12}{\footnotesize $s_{12}$}
\psfrag{s13}{\footnotesize $s_{13}$}
\includegraphics [scale=0.8] {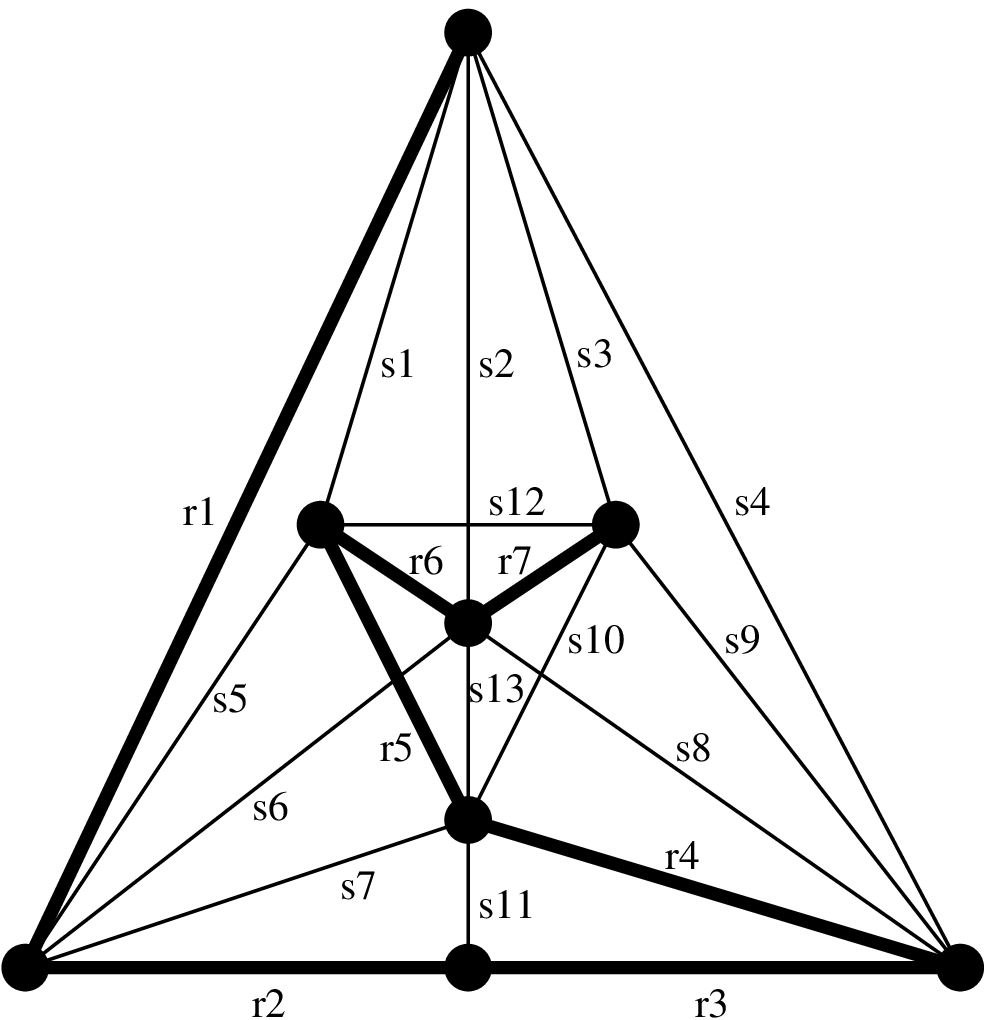}}}
$\mspace{30mu}$
$
g_{25}=
\kbordermatrix{\mbox{}&s_1&s_2&s_3&s_4&s_5&s_6&s_7&s8&s9&s_{10}&s_{11}&s_{12}&s_{13}\\
r_1 & {\;\,\! 1}  	   & {\;\,\! 1}          & {\;\,\! 1}          & {\;\,\! 1}          & {\;\,\! 0}          & {\;\,\! 0}          & {\;\,\! 0}           & {\;\,\! 0}       & {\;\,\! 0}      & {\;\,\! 0}       & {\;\,\! 0}    & {\;\,\! 0}     & {\;\,\! 0}           \\
r_2 & {\;\,\! 1}           & {\;\,\! 1}          & {\;\,\! 1}          & {\;\,\! 1}          & {\;\,\! 1}          & {\;\,\! 1}          & {\;\,\! 1}           & {\;\,\! 0}       & {\;\,\! 0}      & {\;\,\! 0}       & {\;\,\! 0}    & {\;\,\! 0}     & {\;\,\! 0}\\ 
r_3 & {\;\,\! 1}           & {\;\,\! 1}          & {\;\,\! 1}          & {\;\,\! 1}          & {\;\,\! 1}          & {\;\,\! 1}          & {\;\,\! 1}           & {\;\,\! 0}       & {\;\,\! 0}      & {\;\,\! 0}       & {\;\,\! 1}    & {\;\,\! 0}     & {\;\,\! 0}\\ 
r_4 & {\;\,\! 1}           & {\;\,\! 1}          & {\;\,\! 1}          & {\;\,\! 0}          & {\;\,\! 1}          & {\;\,\! 1}          & {\;\,\! 1}           & {\;\,\! 1}       & {\;\,\! 1}      & {\;\,\! 0}       & {\;\,\! 1}    & {\;\,\! 0}     & {\;\,\! 0}\\
r_5 & {\;\,\! 1}           & {\;\,\! 1}          & {\;\,\! 1}          & {\;\,\! 0}          & {\;\,\! 1}          & {\;\,\! 1}          & {\;\,\! 0}           & {\;\,\! 1}       & {\;\,\! 1}      & {\;\,\! 1}       & {\;\,\! 0}    & {\;\,\! 0}     & {\;\,\! 1}\\ 
r_6 & {\;\,\! 0}           & {\;\,\! 1}          & {\;\,\! 1}          & {\;\,\! 0}          & {\;\,\! 0}          & {\;\,\! 1}          & {\;\,\! 0}           & {\;\,\! 1}       & {\;\,\! 1}      & {\;\,\! 1}       & {\;\,\! 0}    & {\;\,\! 1}     & {\;\,\! 1}\\
r_7 & {\;\,\! 0}           & {\;\,\! 0}          & {\;\,\! 1}          & {\;\,\! 0}          & {\;\,\! 0}          & {\;\,\! 0}          & {\;\,\! 0}           & {\;\,\! 0}       & {\;\,\! 1}      & {\;\,\! 1}       & {\;\,\! 0}    & {\;\,\! 1}     & {\;\,\! 0}   
}   
$
\label{fig_g25}
\end{figure*}
\noindent
\begin{center} $M(G_{25})/\{r_5,s_{10},s_{12}\}$ contains an $M(K_5)$-minor.\end{center}
{\tt \scriptsize {\bf Command:} ./macek -pGF2 '!contract 5;!contract -10;!contract -12;!minor' g25 '\{grK5,grK33\}'}\\
{\tt \scriptsize {\bf Output: }The \#1 matroid [g25$\sim$c5$\sim$c-10$\sim$c-12] +HAS+ minor \#1 [grK5] in the list \{grK5 grK33\}.}

\subsubsection*{The matroid $M(G_{26})$:}
\FloatBarrier
\begin{figure*}[h]
\raisebox{-10ex}{\resizebox*{0.6\width}{!}{
\psfrag{r1}{\footnotesize $r_1$}
\psfrag{r2}{\footnotesize $r_2$}
\psfrag{r3}{\footnotesize $r_3$}
\psfrag{r4}{\footnotesize $r_4$}
\psfrag{r5}{\footnotesize $r_5$}
\psfrag{r6}{\footnotesize $r_6$}
\psfrag{r7}{\footnotesize $r_7$}
\psfrag{r8}{\footnotesize $r_8$}
\psfrag{s1}{\footnotesize $s_1$}
\psfrag{s2}{\footnotesize $s_2$}
\psfrag{s3}{\footnotesize $s_3$}
\psfrag{s4}{\footnotesize $s_4$}
\psfrag{s5}{\footnotesize $s_5$}
\psfrag{s6}{\footnotesize $s_6$}
\psfrag{s7}{\footnotesize $s_7$}
\psfrag{s8}{\footnotesize $s_8$}
\psfrag{s9}{\footnotesize $s_9$}
\psfrag{s10}{\footnotesize $s_{10}$}
\psfrag{s11}{\footnotesize $s_{11}$}
\includegraphics [scale=0.8] {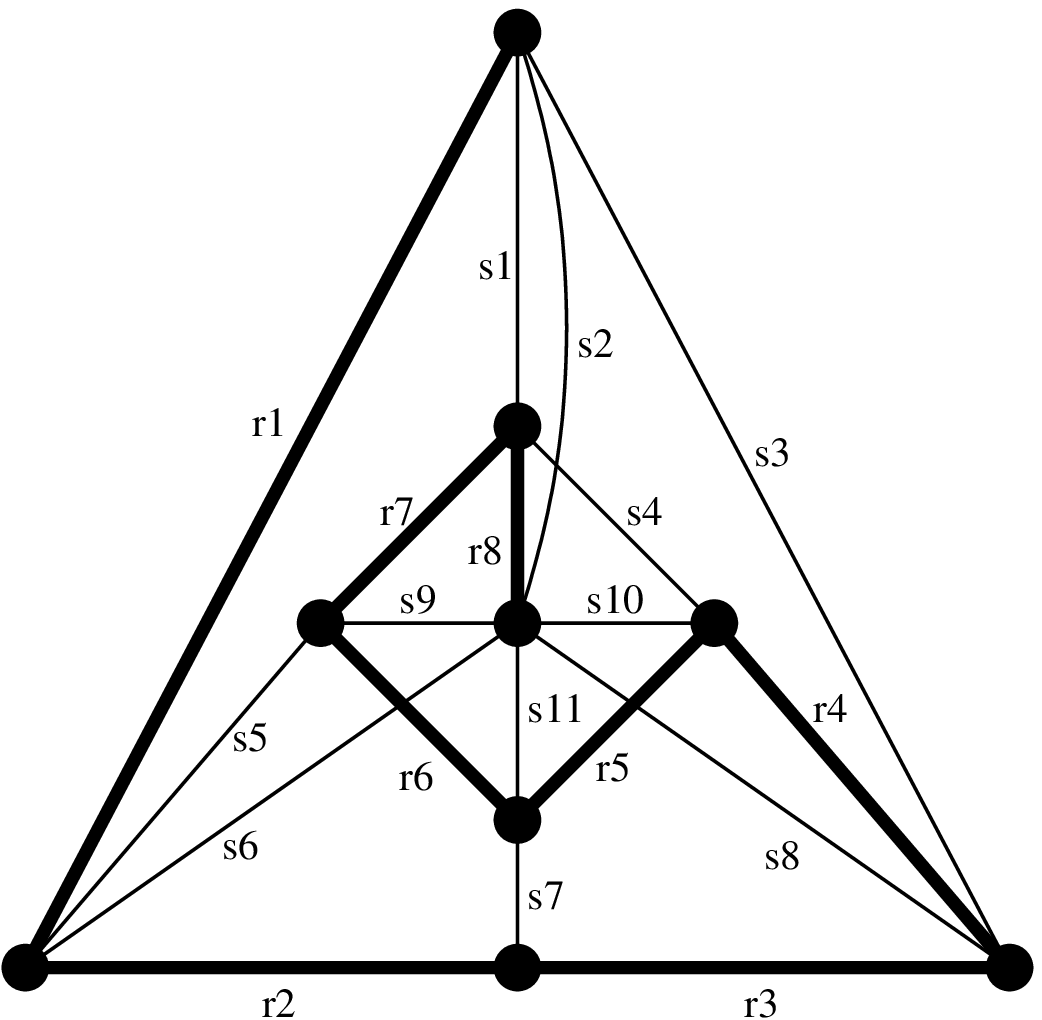}}}
$\mspace{30mu}$
$
g_{26}=
\kbordermatrix{\mbox{}&s_1&s_2&s_3&s_4&s_5&s_6&s_7&s_8&s_9&s_{10}&s_{11}\\
r_1 & {\;\,\! 1}  	   & {\;\,\! 1}          & {\;\,\! 1}          & {\;\,\! 0}          & {\;\,\! 0}          & {\;\,\! 0}          & {\;\,\! 0}           & {\;\,\! 0}       & {\;\,\! 0}      & {\;\,\! 0}             & {\;\,\! 0}\\
r_2 & {\;\,\! 1}           & {\;\,\! 1}          & {\;\,\! 1}          & {\;\,\! 0}          & {\;\,\! 1}          & {\;\,\! 1}          & {\;\,\! 0}           & {\;\,\! 0}       & {\;\,\! 0}      & {\;\,\! 0}             & {\;\,\! 0}\\ 
r_3 & {\;\,\! 1}           & {\;\,\! 1}          & {\;\,\! 1}          & {\;\,\! 0}          & {\;\,\! 1}          & {\;\,\! 1}          & {\;\,\! 1}           & {\;\,\! 0}       & {\;\,\! 0}      & {\;\,\! 0}             & {\;\,\! 0}\\ 
r_4 & {\;\,\! 1}           & {\;\,\! 1}          & {\;\,\! 0}          & {\;\,\! 0}          & {\;\,\! 1}          & {\;\,\! 1}          & {\;\,\! 1}           & {\;\,\! 1}       & {\;\,\! 0}      & {\;\,\! 0}             & {\;\,\! 0}\\
r_5 & {\;\,\! 1}           & {\;\,\! 1}          & {\;\,\! 0}          & {\;\,\! 1}          & {\;\,\! 1}          & {\;\,\! 1}          & {\;\,\! 1}           & {\;\,\! 1}       & {\;\,\! 0}      & {\;\,\! 1}             & {\;\,\! 0}\\ 
r_6 & {\;\,\! 1}           & {\;\,\! 1}          & {\;\,\! 0}          & {\;\,\! 1}          & {\;\,\! 1}          & {\;\,\! 1}          & {\;\,\! 0}           & {\;\,\! 1}       & {\;\,\! 0}      & {\;\,\! 1}             & {\;\,\! 1}\\          
r_7 & {\;\,\! 1}           & {\;\,\! 1}          & {\;\,\! 0}          & {\;\,\! 1}          & {\;\,\! 0}          & {\;\,\! 1}          & {\;\,\! 0}           & {\;\,\! 1}       & {\;\,\! 1}      & {\;\,\! 1}             & {\;\,\! 1}\\
r_8 & {\;\,\! 0}           & {\;\,\! 1}          & {\;\,\! 0}          & {\;\,\! 0}          & {\;\,\! 0}          & {\;\,\! 1}          & {\;\,\! 0}           & {\;\,\! 1}       & {\;\,\! 1}      & {\;\,\! 1}             & {\;\,\! 1}
}
$
\label{fig_g26}
\end{figure*}
\noindent
\begin{center} $M(G_{26})/\{r_5,r_6,r_7,s_4\}$ contains an $M(K_5)$-minor.\end{center}
 {\tt \scriptsize {\bf Command:} ./macek -pGF2 '!contract 5;!contract 6;!contract 7;!contract -4;!minor' g26 '\{grK5,grK33\}'}\\
{\tt \scriptsize {\bf Output: }
TThe \#1 matroid [g26$\sim$c5$\sim$c6$\sim$c7$\sim$c-4] +HAS+ minor \#1 [grK5] in the list \{grK5 grK33\}.}

\subsubsection*{The matroid $M(G_{27})$:}
\FloatBarrier
\begin{figure*}[h]
\raisebox{-10ex}{\resizebox*{0.6\width}{!}{
\psfrag{r1}{\footnotesize $r_1$}
\psfrag{r2}{\footnotesize $r_2$}
\psfrag{r3}{\footnotesize $r_3$}
\psfrag{r4}{\footnotesize $r_4$}
\psfrag{r5}{\footnotesize $r_5$}
\psfrag{r6}{\footnotesize $r_6$}
\psfrag{r7}{\footnotesize $r_7$}
\psfrag{r8}{\footnotesize $r_8$}
\psfrag{s1}{\footnotesize $s_1$}
\psfrag{s2}{\footnotesize $s_2$}
\psfrag{s3}{\footnotesize $s_3$}
\psfrag{s4}{\footnotesize $s_4$}
\psfrag{s5}{\footnotesize $s_5$}
\psfrag{s6}{\footnotesize $s_6$}
\psfrag{s7}{\footnotesize $s_7$}
\psfrag{s8}{\footnotesize $s_8$}
\psfrag{s9}{\footnotesize $s_9$}
\psfrag{s10}{\footnotesize $s_{10}$}
\includegraphics [scale=0.8] {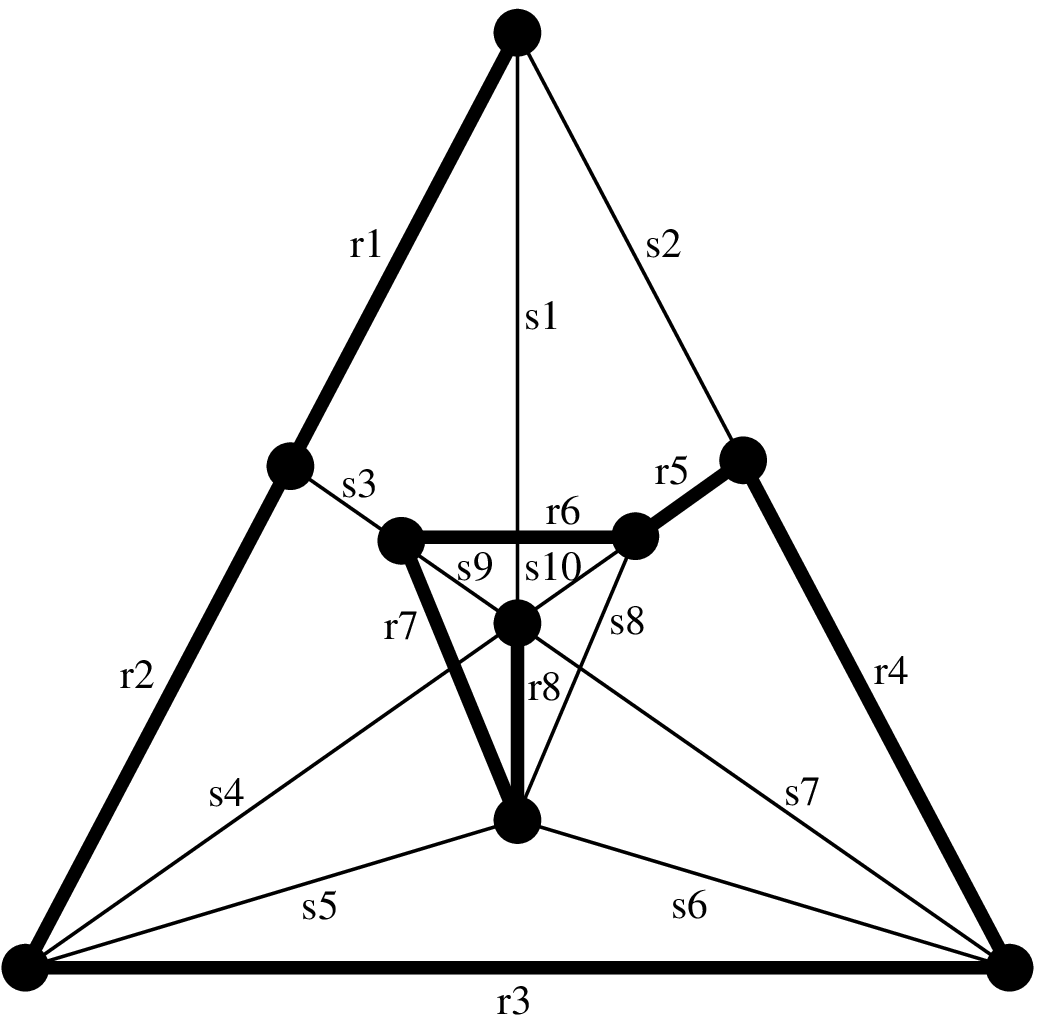}}}
$\mspace{30mu}$
$
g_{27}=
\kbordermatrix{\mbox{}&s_1&s_2&s_3&s_4&s_5&s_6&s_7&s_8&s_9&s_{10}\\
r_1 & {\;\,\! 1}  	   & {\;\,\! 1}          & {\;\,\! 0}          & {\;\,\! 0}          & {\;\,\! 0}          & {\;\,\! 0}          & {\;\,\! 0}           & {\;\,\! 0}       & {\;\,\! 0}      & {\;\,\! 0}   \\
r_2 & {\;\,\! 1}           & {\;\,\! 1}          & {\;\,\! 1}          & {\;\,\! 0}          & {\;\,\! 0}          & {\;\,\! 0}          & {\;\,\! 0}           & {\;\,\! 0}       & {\;\,\! 0}      & {\;\,\! 0}    \\ 
r_3 & {\;\,\! 1}           & {\;\,\! 1}          & {\;\,\! 1}          & {\;\,\! 1}          & {\;\,\! 1}          & {\;\,\! 0}          & {\;\,\! 0}           & {\;\,\! 0}       & {\;\,\! 0}      & {\;\,\! 0}     \\ 
r_4 & {\;\,\! 1}           & {\;\,\! 1}          & {\;\,\! 1}          & {\;\,\! 1}          & {\;\,\! 1}          & {\;\,\! 1}          & {\;\,\! 1}           & {\;\,\! 0}       & {\;\,\! 0}      & {\;\,\! 0}     \\
r_5 & {\;\,\! 1}           & {\;\,\! 0}          & {\;\,\! 1}          & {\;\,\! 1}          & {\;\,\! 1}          & {\;\,\! 1}          & {\;\,\! 1}           & {\;\,\! 0}       & {\;\,\! 0}      & {\;\,\! 0}     \\ 
r_6 & {\;\,\! 1}           & {\;\,\! 0}          & {\;\,\! 1}          & {\;\,\! 1}          & {\;\,\! 1}          & {\;\,\! 1}          & {\;\,\! 1}           & {\;\,\! 1}       & {\;\,\! 0}      & {\;\,\! 1}     \\          
r_7 & {\;\,\! 1}           & {\;\,\! 0}          & {\;\,\! 0}          & {\;\,\! 1}          & {\;\,\! 1}          & {\;\,\! 1}          & {\;\,\! 1}           & {\;\,\! 1}       & {\;\,\! 1}      & {\;\,\! 1}       \\
r_8 & {\;\,\! 1}           & {\;\,\! 0}          & {\;\,\! 0}          & {\;\,\! 1}          & {\;\,\! 0}          & {\;\,\! 0}          & {\;\,\! 1}           & {\;\,\! 0}       & {\;\,\! 1}      & {\;\,\! 1}
}
$
\label{fig_g27}
\end{figure*}
\noindent
\begin{center} $M(G_{27})/\{r_1,r_2,r_3,r_4,s_2\}$ contains an $M(K_5)$-minor.\end{center}
 {\tt \scriptsize {\bf Command:} ./macek -pGF2 '!contract 1;!contract 2;!contract 3;!contract 4;!contract -2;!minor' g27 '\{grK5,grK33\}'}\\
{\tt \scriptsize {\bf Output: }
The \#1 matroid [g27$\sim$c1$\sim$c2$\sim$c3$\sim$c4$\sim$c-2] +HAS+ minor \#1 [grK5] in the list \{grK5 grK33\}.}

\subsubsection*{The matroid $M(G_{28})$:}
\FloatBarrier
\begin{figure*}[h]
\raisebox{-10ex}{\resizebox*{0.6\width}{!}{
\psfrag{r1}{\footnotesize $r_1$}
\psfrag{r2}{\footnotesize $r_2$}
\psfrag{r3}{\footnotesize $r_3$}
\psfrag{r4}{\footnotesize $r_4$}
\psfrag{r5}{\footnotesize $r_5$}
\psfrag{r6}{\footnotesize $r_6$}
\psfrag{r7}{\footnotesize $r_7$}
\psfrag{r8}{\footnotesize $r_8$}
\psfrag{r9}{\footnotesize $r_9$}
\psfrag{s1}{\footnotesize $s_1$}
\psfrag{s2}{\footnotesize $s_2$}
\psfrag{s3}{\footnotesize $s_3$}
\psfrag{s4}{\footnotesize $s_4$}
\psfrag{s5}{\footnotesize $s_5$}
\psfrag{s6}{\footnotesize $s_6$}
\psfrag{s7}{\footnotesize $s_7$}
\psfrag{s8}{\footnotesize $s_8$}
\psfrag{s9}{\footnotesize $s_9$}
\includegraphics [scale=0.8] {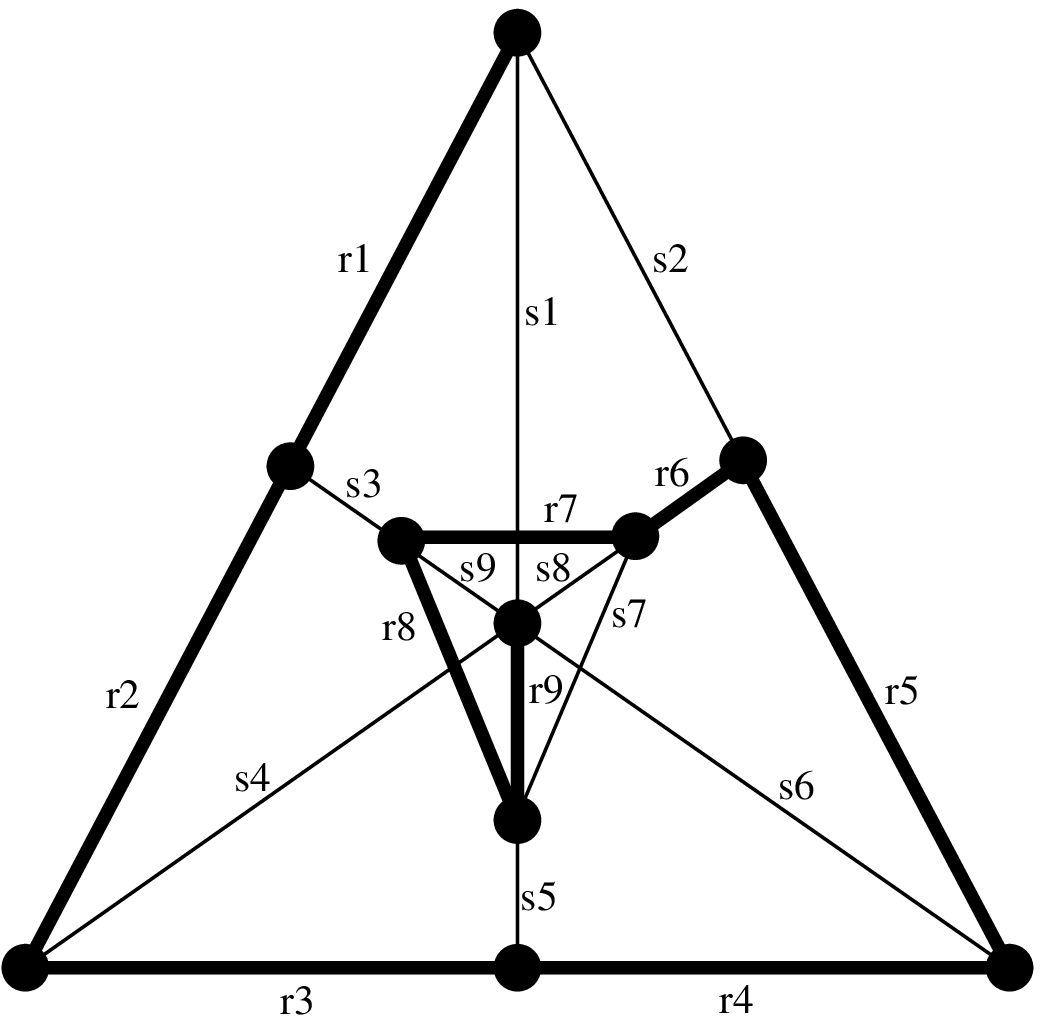}}}
$\mspace{30mu}$
$
g_{28}=
\kbordermatrix{\mbox{}&s_1&s_2&s_3&s_4&s_5&s_6&s_7&s_8&s_9\\
r_1 & {\;\,\! 1}  	   & {\;\,\! 1}          & {\;\,\! 0}          & {\;\,\! 0}          & {\;\,\! 0}          & {\;\,\! 0}          & {\;\,\! 0}            & {\;\,\! 0}        & {\;\,\! 0}\\
r_2 & {\;\,\! 1}           & {\;\,\! 1}          & {\;\,\! 1}          & {\;\,\! 0}          & {\;\,\! 0}          & {\;\,\! 0}          & {\;\,\! 0}            & {\;\,\! 0}        & {\;\,\! 0}\\ 
r_3 & {\;\,\! 1}           & {\;\,\! 1}          & {\;\,\! 1}          & {\;\,\! 1}          & {\;\,\! 0}          & {\;\,\! 0}          & {\;\,\! 0}            & {\;\,\! 0}        & {\;\,\! 0}\\ 
r_4 & {\;\,\! 1}           & {\;\,\! 1}          & {\;\,\! 1}          & {\;\,\! 1}          & {\;\,\! 1}          & {\;\,\! 0}          & {\;\,\! 0}            & {\;\,\! 0}        & {\;\,\! 0}\\
r_5 & {\;\,\! 1}           & {\;\,\! 1}          & {\;\,\! 1}          & {\;\,\! 1}          & {\;\,\! 1}          & {\;\,\! 1}          & {\;\,\! 0}            & {\;\,\! 0}        & {\;\,\! 0}\\ 
r_6 & {\;\,\! 1}           & {\;\,\! 0}          & {\;\,\! 1}          & {\;\,\! 1}          & {\;\,\! 1}          & {\;\,\! 1}          & {\;\,\! 0}            & {\;\,\! 0}        & {\;\,\! 0}\\          
r_7 & {\;\,\! 1}           & {\;\,\! 0}          & {\;\,\! 1}          & {\;\,\! 1}          & {\;\,\! 1}          & {\;\,\! 1}          & {\;\,\! 1}            & {\;\,\! 1}        & {\;\,\! 0}\\
r_8 & {\;\,\! 1}           & {\;\,\! 0}          & {\;\,\! 0}          & {\;\,\! 1}          & {\;\,\! 1}          & {\;\,\! 1}          & {\;\,\! 1}            & {\;\,\! 1}        & {\;\,\! 1}\\
r_9 & {\;\,\! 1}           & {\;\,\! 0}          & {\;\,\! 0}          & {\;\,\! 1}          & {\;\,\! 0}          & {\;\,\! 1}          & {\;\,\! 0}            & {\;\,\! 1}        & {\;\,\! 1}
}
$
\label{fig_g28}
\end{figure*}     
\noindent
\begin{center} $M(G_{28})/\{r_1,r_2,r_3,r_4,r_5, s_2\}$ contains an $M(K_{5})$-minor.\end{center}
{\tt \scriptsize {\bf Command:} ./macek -pGF2 '!contract 1;!contract 2;!contract 3;!contract 4;!contract 5;!contract -2;!minor' g28 '\{grK5,grK33\}'}\\
{\tt \scriptsize {\bf Output: }
 The \#1 matroid [g28$\sim$c1$\sim$c2$\sim$c3$\sim$c4$\sim$c5$\sim$c] +HAS+ minor \#1 [grK5] in the list \{grK5 grK33\}.}

\subsubsection*{The matroid $M(G_{29})$:}
\FloatBarrier
\begin{figure*}[h]
\raisebox{-10ex}{\resizebox*{0.6\width}{!}{
\psfrag{r1}{\footnotesize $r_1$}
\psfrag{r2}{\footnotesize $r_2$}
\psfrag{r3}{\footnotesize $r_3$}
\psfrag{r4}{\footnotesize $r_4$}
\psfrag{r5}{\footnotesize $r_5$}
\psfrag{r6}{\footnotesize $r_6$}
\psfrag{r7}{\footnotesize $r_7$}
\psfrag{r8}{\footnotesize $r_8$}
\psfrag{r9}{\footnotesize $r_9$}
\psfrag{r10}{\footnotesize $r_{10}$}
\psfrag{s1}{\footnotesize $s_1$}
\psfrag{s2}{\footnotesize $s_2$}
\psfrag{s3}{\footnotesize $s_3$}
\psfrag{s4}{\footnotesize $s_4$}
\psfrag{s5}{\footnotesize $s_5$}
\psfrag{s6}{\footnotesize $s_6$}
\psfrag{s7}{\footnotesize $s_7$}
\psfrag{s8}{\footnotesize $s_8$}
\includegraphics [scale=0.8] {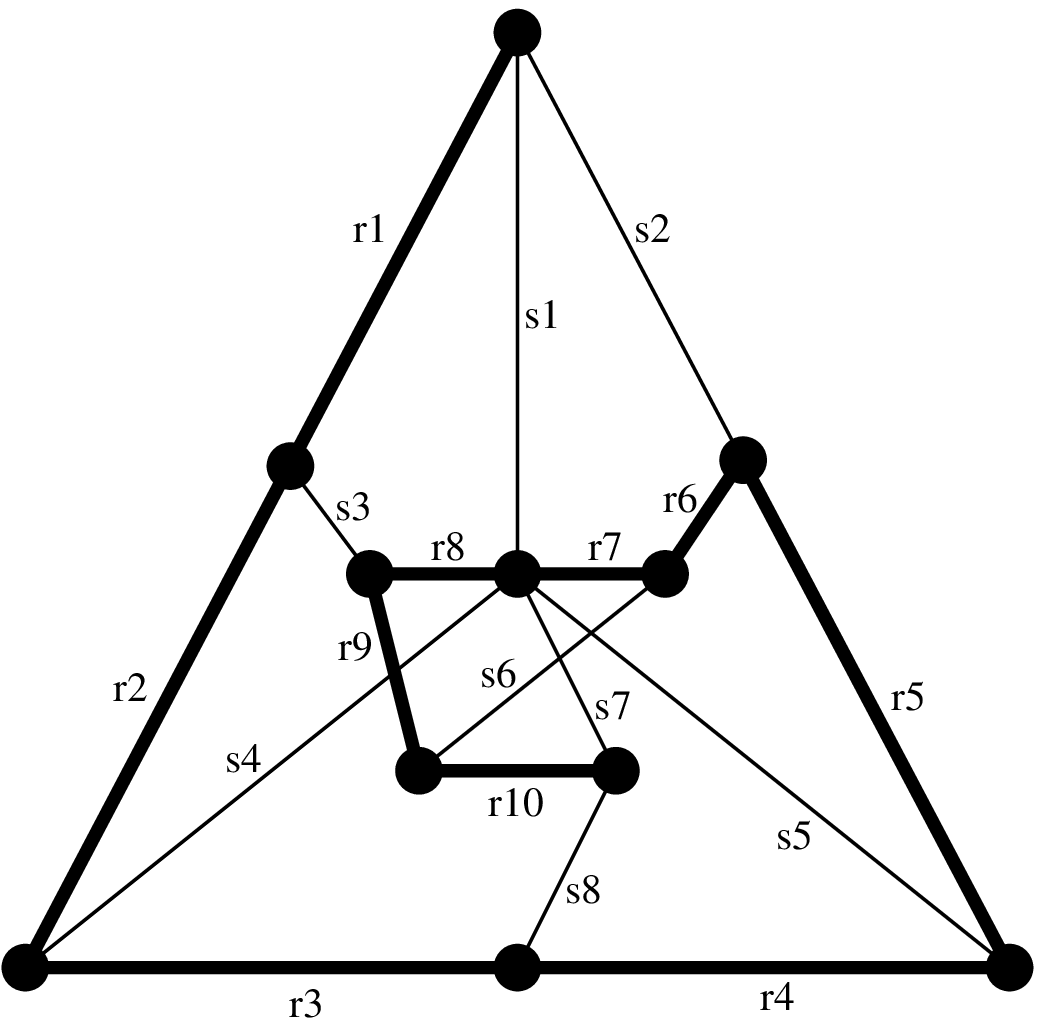}}}
$\mspace{30mu}$
$
g_{29}=
\kbordermatrix{\mbox{}&s_1&s_2&s_3&s_4&s_5&s_6&s_7&s_8\\
r_1    & {\;\,\! 1}           & {\;\,\! 1}          & {\;\,\! 0}          & {\;\,\! 0}          & {\;\,\! 0}          & {\;\,\! 0}          & {\;\,\! 0}         & {\;\,\! 0}          \\
r_2    & {\;\,\! 1}           & {\;\,\! 1}          & {\;\,\! 1}          & {\;\,\! 0}          & {\;\,\! 0}          & {\;\,\! 0}          & {\;\,\! 0}         & {\;\,\! 0}          \\ 
r_3    & {\;\,\! 1}           & {\;\,\! 1}          & {\;\,\! 1}          & {\;\,\! 1}          & {\;\,\! 0}          & {\;\,\! 0}          & {\;\,\! 0}         & {\;\,\! 0}            \\ 
r_4    & {\;\,\! 1}           & {\;\,\! 1}          & {\;\,\! 1}          & {\;\,\! 1}          & {\;\,\! 0}          & {\;\,\! 0}          & {\;\,\! 0}         & {\;\,\! 1}          \\ 
r_5    & {\;\,\! 1}           & {\;\,\! 1}          & {\;\,\! 1}          & {\;\,\! 1}          & {\;\,\! 1}          & {\;\,\! 0}          & {\;\,\! 0}         & {\;\,\! 1}          \\ 
r_6    & {\;\,\! 1}           & {\;\,\! 0}          & {\;\,\! 1}          & {\;\,\! 1}          & {\;\,\! 1}          & {\;\,\! 0}          & {\;\,\! 0}         & {\;\,\! 1}           \\ 
r_7    & {\;\,\! 1}           & {\;\,\! 0}          & {\;\,\! 1}          & {\;\,\! 1}          & {\;\,\! 1}          & {\;\,\! 1}          & {\;\,\! 0}         & {\;\,\! 1}            \\
r_8    & {\;\,\! 0}           & {\;\,\! 0}          & {\;\,\! 1}          & {\;\,\! 0}          & {\;\,\! 0}          & {\;\,\! 1}          & {\;\,\! 1}         & {\;\,\! 1}           \\
r_9    & {\;\,\! 0}           & {\;\,\! 0}          & {\;\,\! 0}          & {\;\,\! 0}          & {\;\,\! 0}          & {\;\,\! 1}          & {\;\,\! 1}         & {\;\,\! 1}      \\
r_{10} & {\;\,\! 0}           & {\;\,\! 0}          & {\;\,\! 0}          & {\;\,\! 0}          & {\;\,\! 0}          & {\;\,\! 0}          & {\;\,\! 1}         & {\;\,\! 1}           
}   
$
\label{fig_g29}
\end{figure*}
\noindent
\begin{center} $M(G_{29})/\{r_1,r_2,r_3,r_4,r_5,s_2\}$ contains an $M(K_{3,3})$-minor.\end{center}
{\tt \scriptsize {\bf Command:}./macek -pGF2 '!contract 1;!contract 2;!contract 3;!contract 4;!contract 5;!contract -2;!minor' g29 '\{grK5,grK33\}'}\\
{\tt \scriptsize {\bf Output: }The \#1 matroid [g5$\sim$c7$\sim$c8$\sim$c9$\sim$c-8] +HAS+ minor \#2 [grK33] in the list \{grK5 grK33\}.}

\subsubsection*{The matroid $R_{15}$:}
\FloatBarrier
\[
r_{15}=
\kbordermatrix{\mbox{}&s_1&s_2&s_3&s_4&s_5&s_6&s_7&s_8\\  
r_1 & {\;\,\! 1}  	   & {\;\,\! 0}          & {\;\,\! 1}          & {\;\,\! 0}          & {\;\,\! 0}          & {\;\,\! 0}         & {\;\,\! 0}        & {\;\,\! 1}      \\
r_2 & {\;\,\! 0}           & {\;\,\! 0}          & {\;\,\! 0}          & {\;\,\! 1}          & {\;\,\! 1}          & {\;\,\! 0}         & {\;\,\! 1}        & {\;\,\! 0}      \\ 
r_3 & {\;\,\! 1}           & {\;\,\! 1}          & {\;\,\! 0}          & {\;\,\! 0}          & {\;\,\! 1}          & {\;\,\! 1}         & {\;\,\! 0}        & {\;\,\! 0}      \\ 
r_4 & {\;\,\! 1}           & {\;\,\! 1}          & {\;\,\! 0}          & {\;\,\! 0}          & {\;\,\! 0}          & {\;\,\! 1}         & {\;\,\! 1}        & {\;\,\! 0}      \\
r_5 & {\;\,\! 0}           & {\;\,\! 1}          & {\;\,\! 1}          & {\;\,\! 1}          & {\;\,\! 1}          & {\;\,\! 0}         & {\;\,\! 0}        & {\;\,\! 0}      \\ 
r_6 & {\;\,\! 0}           & {\;\,\! 1}          & {\;\,\! 1}          & {\;\,\! 1}          & {\;\,\! 1}          & {\;\,\! 1}         & {\;\,\! 0}        & {\;\,\! 0}      \\          
r_7 & {\;\,\! 0}           & {\;\,\! 1}          & {\;\,\! 1}          & {\;\,\! 1}          & {\;\,\! 1}          & {\;\,\! 1}         & {\;\,\! 0}        & {\;\,\! 1}      
}
\]
\noindent
\begin{center} $R_{15}\backslash{\{r_6,r_7,s_8\}}$ contains an $M(K_{3,3})$-minor.\end{center}
{\tt \scriptsize {\bf Command:}./macek -pGF2 '!delete 6;!delete 7;!delete -8;!minor' r15 '\{"grK5;!dual","grK33;!dual"\}'}\\
{\tt \scriptsize {\bf Output: }The \#1 matroid [r15$\sim$d6$\sim$d7$\sim$d-8] +HAS+ minor \#2 [grK33\#] in the list \{grK5\# grK33\#\}.}\\

\subsubsection*{The matroid $R_{16}$:}
\FloatBarrier
\[
r_{16}=
\kbordermatrix{\mbox{}&s_1&s_2&s_3&s_4&s_5&s_6&s_7&s_8\\
r_1 & {\;\,\! 0}  	   & {\;\,\! 1}          & {\;\,\! 1}          & {\;\,\! 0}          & {\;\,\! 1}          & {\;\,\! 0}         & {\;\,\! 0}        & {\;\,\! 0}\\
r_2 & {\;\,\! 0}           & {\;\,\! 0}          & {\;\,\! 0}          & {\;\,\! 0}          & {\;\,\! 1}          & {\;\,\! 1}         & {\;\,\! 1}        & {\;\,\! 0}\\ 
r_3 & {\;\,\! 0}           & {\;\,\! 1}          & {\;\,\! 1}          & {\;\,\! 0}          & {\;\,\! 1}          & {\;\,\! 1}         & {\;\,\! 1}        & {\;\,\! 0}\\ 
r_4 & {\;\,\! 0}           & {\;\,\! 0}          & {\;\,\! 0}          & {\;\,\! 1}          & {\;\,\! 1}          & {\;\,\! 1}         & {\;\,\! 0}        & {\;\,\! 0}\\
r_5 & {\;\,\! 1}           & {\;\,\! 1}          & {\;\,\! 0}          & {\;\,\! 1}          & {\;\,\! 1}          & {\;\,\! 1}         & {\;\,\! 0}        & {\;\,\! 0}\\ 
r_6 & {\;\,\! 1}           & {\;\,\! 0}          & {\;\,\! 1}          & {\;\,\! 1}          & {\;\,\! 0}          & {\;\,\! 0}         & {\;\,\! 0}        & {\;\,\! 0}\\          
r_7 & {\;\,\! 1}           & {\;\,\! 1}          & {\;\,\! 0}          & {\;\,\! 0}          & {\;\,\! 0}          & {\;\,\! 0}         & {\;\,\! 0}        & {\;\,\! 1} \\
r_8 & {\;\,\! 1}           & {\;\,\! 0}          & {\;\,\! 1}          & {\;\,\! 0}          & {\;\,\! 0}          & {\;\,\! 0}         & {\;\,\! 0}        & {\;\,\! 1}
}
\]   
\noindent
\begin{center} $M(R_{16})\backslash{\{r_8,s_1,s_3,s_8\}}$ contains an $M(K_{3,3})$-minor.\end{center}
{\tt \scriptsize {\bf Command:} ./macek -pGF2 '!delete 8;!delete -1;!delete -3;!delete -8;!minor' r16 '\{"grK5;!dual","grK33;!dual"\}'}\\
{\tt \scriptsize {\bf Output: }
The \#1 matroid [r16$\sim$d8$\sim$d-1$\sim$d-3$\sim$d-8] +HAS+ minor \#2 [grK33\#] in the list \{grK5\# grK33\#\}}.

\section*{Acknowledgements}
This research has
been funded by the European Union (European Social Fund - ESF) and Greek national funds through the Operational
Program “Education and Lifelong Learning” of the National Strategic Reference Framework (NSRF) -
Research Funding Program: Thalis. Investing in knowledge society through the European Social Fund.

\bibliographystyle{plain}

\end{document}